\newtheorem{thm}{Theorem}[section]
\newtheorem{lemma}[thm]{Lemma}
\newtheorem{cor}[thm]{Corollary}
\newtheorem{prop}[thm]{Proposition}
\newtheorem*{main}{Main Theorem}{}
\newtheorem*{question}{Question}{}
\theoremstyle{remark}
\newtheorem{remark}[thm]{Remark}
\newtheorem*{definition*}{Definition}
\newtheorem*{remark*}{Remark}
\def\C{{\mathcal C}}
\def\X{{\mathcal X}}
\def\R{{\mathbb R}}
\def\Z{{\mathbb Z}}
\def\G{{\Gamma}}
\def\MCG{{\it MCG\ }}
\begin{document}

\title{A hyperbolic $Out(F_n)$-complex}
  \author{Mladen Bestvina and Mark Feighn \thanks{Both authors
  gratefully acknowledge the support by the National Science
  Foundation. }}
  \date{October 28, 2009}
\maketitle

\abstract{For any finite collection $f_i$ of fully irreducible automorphisms
  of the free group $F_n$ we construct a connected $\delta$-hyperbolic
  $Out(F_n)$-complex in which each $f_i$ has positive translation length.}

\section{Introduction}

The study of the outer automorphism group $Out(F_n)$ of a free group
$F_n$ of rank $n$ has very successfully been driven by analogies
with mapping class groups. At the foundation of the theory is
Culler-Vogtmann's Outer space \cite{CV}, which plays the role of
Teichm\"uller space. The topology of Outer space is very well
understood, but its geometry is still very much a mystery. This is to
be contrasted with the rich theory of the geometry of Teichm\"uller
space. An instance of this contrast is the celebrated result of Masur
and Minsky \cite{MM} that the curve complex is hyperbolic. There is no
analogous result in the $Out(F_n)$ category, although candidates for
such a complex abound, see \cite{ilya-lustig}.

In this paper we prove the following, where $\overline{\mathcal{PT}}$
denotes the compactified Outer space.

\begin{main}
  For any finite collection $f_1,\cdots,f_k$ of fully irreducible
  elements of $Out(F_n)$ there is a connected $\delta$-hyperbolic
  graph $\X$ equipped with an (isometric) action of $Out(F_n)$ such
  that
\begin{itemize}
\item the stabilizer in $Out(F_n)$ of a simplicial tree in
  $\overline{\mathcal{PT}}$ has bounded
  orbits, 
\item the stabilizer in $Out(F_n)$ of a proper free factor $F\subset F_n$
  has bounded orbits, and
\item
  $f_1,\cdots,f_k$ have nonzero translation lengths.
\end{itemize}
\end{main}

The situation is much less than ideal, not
only because of the dependence of $\X$ on choices, but also because there is
no ``intrinsic'' description of the complexes in the style of the
curve complex.

However, the complexes are useful in that they allow construction of
many 
quasi-homomorphisms on $Out(F_n)$, a result recently announced by Ursula
Hamenst\"adt. 

The construction follows an idea of Brian Bowditch, who used it to
show that convergence groups are hyperbolic \cite{bo}. In Section 2 we
review Bowditch's construction, in Section 3 we sketch the analogous
construction of a hyperbolic complex for mapping class groups and in
Section 4 we carry out this program for $Out(F_n)$. The construction
for $Out(F_n)$ relies on the dynamics of the action of $Out(F_n)$ on
spaces of trees and currents, and we review the necessary
material. Some of the results we need are slight variations of the
ones found in the literature, and we sketch the proofs of these. 

{\bf Acknowledgements:} The first author would like to thank Ursula
Hamenst\"adt for a very intriguing lecture in Baltimore in May 2008
that inspired this work. We also thank Ken Bromberg for his help with
Remark \ref{ken}.

\section{Bowditch's construction}
The goal of this section is to show that if a group $\G$ acts on a
space $M$ satisfying some simple axioms, then $\G$ also acts on a
$\delta$-hyperbolic space. The model situation is that of a
convergence group action on a compact space, discussed by Bowditch
\cite{bo}. He proved that a group $\G$ that acts on a compact
metrizable space as a convergence group (i.e. properly and cocompactly
on the space of triples of distinct points) is hyperbolic and the
compact space is equivariantly homeomorphic to the boundary $\partial
\G$. In fact, with very little modification, Bowditch's construction
applies to noncompact spaces. For example, by looking at the action of
the mapping class group on the (suitable subset of the) Thurston
boundary, this gives its action on a hyperbolic graph (it is not clear
how this graph is related to the curve complex).

We will outline Bowditch's construction. First, we recall some
definitions. Fix an action (by homeomorphisms) of a group $\G$ on a
(metrizable) space $M$. We will assume that $M$ has no isolated
points. 

\subsection{Annulus systems}

An {\it annulus} in $M$ is a pair $A=(A^-,A^+)$ of disjoint closed
subsets of $M$ whose union is not all of $M$. For a subset $K\subset
M$ write $K<A$ if $K\subset int\ A^-$ and write $A<K$ if
$K<-A:=(A^+,A^-)$. For annuli $A$ and $B=(B^-,B^+)$ write $A<B$ if
$int\ A^+\cup int\ B^-=M$.

An {\it annulus system} $\mathcal A$ is a $\G$-invariant set of
annuli, such that $A\in\mathcal A$ implies $-A\in\mathcal A$.

If $K,L\subset M$ write $(K|L)=n\in \{0,1,\cdots,\infty\}$ if $n$ is
the maximal number of annuli $A_i$ in $\mathcal A$ such that 
$$K<A_1<A_2<\cdots<A_n<L$$
For finite sets we drop braces, e.g. $(ab|cd)$ means
$(\{a,b\}|\{c,d\})$. 

Consider the following axioms.

\begin{enumerate}
\item [(A1)] If $x\neq y$ and $z\neq w$ then $(xy|zw)<\infty$.
\item [(A2)] There is $k\geq 0$ such that for any $x,y,z,w\in M$ either
  $(xz|yw)\leq k$ or $(xw|yz)\leq k$.
\end{enumerate}

\subsection{Hyperbolic crossratio}\label{s:crossratio}

A {\it crossratio} on $M$ is a function $M^4\to [0,\infty]$,
$(x,y,z,w)\mapsto (xy|zw)$ such that $(xy|zw)=(yx|zw)=(zw|xy)$. A
crossratio is {\it $k$-hyperbolic} if: 
\begin{enumerate}
\item [(C1)] If $F\subset M$ is a 4-element subset, we can write
  $F=\{x,y,z,w\}$ such that $(xz|yw)\simeq_k0$ and $(xw|yz)\simeq_k0$.
\item [(C2)] If $F\subset M$ is a 5-element subset, we can write
  $F=\{x,y,z,w,u\}$ such that $(xy|zu)\simeq_k(xy|wu)$,
  $(xu|zw)\simeq_k(yu|zw)$, $(xy|zw)\simeq_k(xy|zu)+(xu|zw)$.
\end{enumerate}
where $a\simeq_kb$ means $|a-b|\leq k$. The intuition is that if $M$
is a tree, then letting $(xy|zw)$ be the distance between $[x,y]$ and
$[z,w]$ defines a 0-hyperbolic crossratio.

Note that $(C1)$ implies that for any 4 distinct points at most one of
the crossratios $(xy|zw),(xz|yw),(xw|yz)$ is $>k$. We write $(xy:zw)$
to mean that $(xz|yw)$ and $(xw|yz)$ are $\leq k$. We also write
$(xy:u:zw)$ to mean $(xy:zw),(yu:zw),(xu:zw),(xy:uw),(xy:uz)$.

A hyperbolic crossratio is a {\it path crossratio} if for any distinct
$x,y,z,w\in M$ and any $p\leq (xy|zw)$ there is $u\in M$ with
$(xy:u:zw)$ and $(xy|zu)\simeq p$, where we write $\simeq$ for
$\simeq_k$ when $k$ is understood.

\begin{prop}
Suppose the annulus system $\mathcal A$ satisfies (A1) and (A2). Then
the crossratio $(xy|zw)$ defined by counting annuli is a hyperbolic
path crossratio.
\end{prop}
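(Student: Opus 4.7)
The task breaks into verifying the symmetries of a crossratio, axiom (C1), axiom (C2), and the path property. The symmetries $(xy|zw)=(yx|zw)$ and $(xy|zw)=(zw|xy)$ are immediate: the first because $\{x,y\}$ is unordered, the second because reversing and negating a maximal chain $\{x,y\}<A_1<\cdots<A_n<\{z,w\}$ produces a chain $\{z,w\}<-A_n<\cdots<-A_1<\{x,y\}$ of the same length, using the closure of $\mathcal{A}$ under $A\mapsto -A$. Finiteness on distinct quadruples is exactly (A1). For (C1), the three unordered crossratios on four distinct points $a,b,c,d$ are $(ab|cd)$, $(ac|bd)$, $(ad|bc)$; applying (A2) to the labelings $(a,b,c,d)$, $(a,c,b,d)$, and $(a,d,b,c)$ yields three disjunctive bounds that together force at most one crossratio to exceed $k$. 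Relabeling so this possibly large pairing is $\{x,y\}$ versus $\{z,w\}$ gives (C1) with the same constant $k$ as in (A2).

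The main content is axiom (C2). The tree-intuition is that among five points one plays the role of a central node on the geodesic between the two opposite pairs. For a general annulus system I would, for each four-element subset of the five points, apply (C1) to identify the large pairing—the \emph{quartet topology} on that subset—and then, by a finite case analysis, argue that the five quartets are compatible with a single tree shape in which one point $u$ lies on the path from $\{x,y\}$ to $\{z,w\}$; inconsistent configurations are ruled out using (A2) and (C1). Given $u$, a maximal chain $\{x,y\}<A_1<\cdots<A_n<\{z,w\}$ splits at the first index $j$ with $u\in\mathrm{int}\ A_j^+$: the prefix witnesses $(xy|zu)\geq j-1$ (up to bounded error from $z$ possibly lying on the wrong side of some early annulus, which is controlled by (C1) on $\{x,y,z,u\}$), the suffix witnesses $(xu|zw)\geq n-j$, and the near-additivity $(xy|zw)\simeq (xy|zu)+(xu|zw)$ then follows after absorbing a uniformly bounded slack. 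The parallel identities $(xy|zu)\simeq(xy|wu)$ and $(xu|zw)\simeq(yu|zw)$ express that $x,y$ and respectively $z,w$ lie on a common side of $u$, and reduce to (C1) applied to the 4-subsets $\{x,y,w,u\}$ and $\{y,z,w,u\}$ via point-swaps on the same side of the relevant annuli.

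For the path property, given $p\leq(xy|zw)=n$ and a maximal chain as above, the relation $A_p<A_{p+1}$ gives $\mathrm{int}\ A_p^+\cup\mathrm{int}\ A_{p+1}^-=M$. I would choose $u\in\mathrm{int}\ A_p^+\cap\mathrm{int}\ A_{p+1}^-$, invoking the no-isolated-points assumption on $M$ to keep $u$ distinct from $x,y,z,w$ (and adjusting the index by a bounded amount if necessary to ensure the intersection is nondegenerate). The chain $\{x,y\}<A_1<\cdots<A_p<u$ gives $(xy|zu)\geq p$, the symmetric tail gives $(xu|zw)\geq n-p$, and the near-additivity from (C2) forces $(xy|zu)\simeq p$. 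The assertion $(xy:u:zw)$ unpacks into several small-crossratio bounds on 4-subsets containing $u$, each of which is a direct consequence of (C1) in the tree shape established in (C2). The principal obstacle throughout is (C2): verifying that the 4-point quartet topologies on a 5-point set cohere into a consistent tree shape, and tracking the additive error when splicing annulus chains at the central point $u$.
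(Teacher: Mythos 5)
Your proposal is a from-scratch reproof, whereas the paper simply invokes \cite[Proposition 6.5]{bo} and observes that compactness of $M$ is never used there; so you should be measured against Bowditch's actual argument, a substantial portion of his Section 6. The easy parts of your sketch are fine: the symmetries, finiteness from (A1), and the derivation of (C1) from (A2) applied to the three labelings are correct, and your chain-splitting argument does yield the triangle-type inequality $(xy|zw)\leq(xy|zu)+(xu|zw)+1$ (this is \cite[Lemma 6.1]{bo}, quoted later in the paper). Two small slips there: with the convention $A_i<A_{i+1}$ the set of indices $i$ with $u\in int\ A_i^+$ is an initial segment, so you must split at the \emph{last} such index, not the first; and in the path property the intersection $int\ A_p^+\cap int\ A_{p+1}^-$ may be empty when $M$ is disconnected --- the standard fix is to take $u\in M\setminus(A_p^-\cup A_p^+)$, which is nonempty by the very definition of an annulus and automatically avoids $x,y,z,w$.

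The genuine gap is in (C2), which is exactly where the content of the proposition lies. Your splitting argument only gives the \emph{easy} inequality $(xy|zw)\lesssim(xy|zu)+(xu|zw)$; the reverse inequality $(xy|zw)\geq(xy|zu)+(xu|zw)-k$ requires showing that a chain of annuli separating $\{x,y\}$ from $\{z,u\}$ and a chain separating $\{x,u\}$ from $\{z,w\}$ can, after discarding boundedly many annuli, be interleaved into a single nested chain from $\{x,y\}$ to $\{z,w\}$ --- two annuli from the two chains need not satisfy $A<B$ just because $u$ lies on the appropriate side of each, and ruling this out is where (A2) must be used quantitatively. Similarly, $(xy|zu)\simeq_k(xy|wu)$ compares crossratios of two \emph{different} 4-element subsets and is not a consequence of (C1) on either subset; it needs an argument that annuli separating $\{x,y\}$ from $\{z,u\}$ mostly also separate $\{x,y\}$ from $\{w,u\}$. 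Finally, the ``finite case analysis'' producing a coherent tree shape for the five points is asserted rather than carried out, and the inconsistency-elimination is not purely combinatorial --- it needs quantitative estimates of the same kind. These are precisely the steps occupying Bowditch's Lemmas 6.2--6.4 and the proof of his Proposition 6.5, so as written your argument omits the core of the proof rather than supplying an alternative to it.
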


\begin{proof}
This is \cite[Proposition 6.5]{bo}. Note that Bowditch assumes that
$M$ is compact, but in fact he does not use this assumption in the
proof. 
\end{proof}

\subsection{Hyperbolic path quasi-metric}

Let $Q$ be the set of ordered triples of distinct points in $M$. We
assume that we are given hyperbolic path crossratio on $M$.

If $A=(a_1,a_2,a_3)\in Q$ and $B=(b_1,b_2,b_3)\in Q$ define

$$\rho(A,B)=\max (a_ia_j|b_kb_l)$$
over $i\neq j$ and $k\neq l$. 

The intuition is that one can embed the 6 points $a_i,b_j$ into a
metric tree so that the crossratios get distorted a bounded amount. Then
$\rho(A,B)$ (up to a bounded number) is the distance between the centers
of the tripods spanned by $a_i$ and by $b_j$ respectively.

\begin{prop}
$(Q,\rho)$ is a hyperbolic path quasi-metric space. This means that
  for some $k\geq 0$
\begin{itemize}
\item (quasi-metric) $\rho(A,C)\leq \rho(A,B)+\rho(B,C)+k$,
\item (hyperbolic) the 4-point definition of $k$-hyperbolicity holds (via
  Gromov products),
\item (path) Any two points $A, B$ can be connected by a finite sequence
  $A=z_0, z_1, \dots, z_N=B$ so that $\rho(z_i,z_{j})\simeq_k |i-j|$.
\end{itemize}
\end{prop}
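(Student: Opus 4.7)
The plan is to extract each of the three properties from the crossratio axioms (C1)--(C2) and the path property, guided by the tree intuition already stated in the paper: $\rho(A,B)$ should approximate the tree-distance between the tripod-centers of the two triples once the $6$ points $a_i,b_j$ are embedded, up to bounded crossratio distortion, in a metric tree.

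For the quasi-metric inequality, I would fix triples $A,B,C$ and select indices $i\neq j$ and $l\neq m$ realizing $\rho(A,C)=(a_ia_j|c_lc_m)$. The central tool is the additivity built into (C2): applied to a $5$-point set of the form $\{a_i,a_j,c_l,c_m,u\}$ with $u$ playing the role of the center, it gives
\[
(a_ia_j|c_lc_m)\simeq_k (a_ia_j|c_lu)+(a_iu|c_lc_m).
\]
I would apply this twice, inserting two of the three points of $B$ as way-stations, so as to rewrite the crossratio on the left as a sum of a term of type $(a_*a_*|b_*b_*)$ bounded by $\rho(A,B)$ and one of type $(b_*b_*|c_*c_*)$ bounded by $\rho(B,C)$. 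The $5$-point axiom requires the inserted point to lie ``in the middle''; the fact that $B$ has three distinct points, together with (C1) ruling out degenerate configurations on each $4$-subset encountered, provides enough flexibility to always locate a suitable pair of $b$'s, yielding a uniform constant $k'$ with $\rho(A,C)\leq \rho(A,B)+\rho(B,C)+k'$.

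For the hyperbolic ($4$-point) condition, I consider four triples $A,B,C,D$ and expand each of the three channel sums $\rho(A,B)+\rho(C,D)$, $\rho(A,C)+\rho(B,D)$, $\rho(A,D)+\rho(B,C)$ in terms of crossratios on appropriate $4$-subsets of the twelve underlying points. Applying (C1) to these $4$-subsets and combining with (C2)-additivity (using points from the remaining two triples as intermediates as in the quasi-metric step) gives the Gromov $4$-point inequality up to an additive error depending only on $k$. The path property is derived by iterated use of the path crossratio axiom: to move from $A$ to $B$, I change one coordinate at a time, invoking the path axiom to produce a chain $u_0,u_1,\dots,u_N$ of intermediate points splitting the relevant crossratio into pieces of bounded size. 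Between consecutive triples only one coordinate changes by a bounded crossratio amount, so $\rho$ is bounded on consecutive triples; additivity from (C2) then upgrades this to $\rho(z_s,z_t)\simeq_{k'}|s-t|$ for the whole chain.

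The main obstacle will be the quasi-metric inequality: (C2) inserts only a \emph{single} midpoint, whereas $\rho$ compares pair-to-pair crossratios, so a careful case analysis using (C1) is required to control the positions of the three points of $B$ relative to the ``geodesic'' from the $A$-pair to the $C$-pair, and to rule out the configurations in which no point of $B$ can play the role of an intermediate. Having three points per triple (rather than two) supplies exactly the pigeonhole slack needed for that analysis to close.
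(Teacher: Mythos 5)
The paper does not argue this proposition at all: its proof is a citation to Proposition 4.2 and Lemma 4.3 of Bowditch's paper, so your proposal has to stand as an independent proof, and as written it has genuine gaps at exactly the points where the real work lies. The central one is your use of (C2). That axiom is existential: for a given $5$-element subset it asserts that \emph{some} labeling $\{x,y,z,w,u\}$ satisfies the three approximate identities; it does not let you designate a chosen point $b$ of $B$ as the center and conclude $(a_ia_j|c_lc_m)\simeq_k(a_ia_j|c_lb)+(a_ib|c_lc_m)$. Already in the model tree crossratio this two-sided additivity fails for a badly placed $b$ (take $b$ far out on a branch attached in the interior of the segment $[a_i,a_j]$: the left side is unchanged while the right side overshoots by an arbitrarily large amount), so the correct labeling must be found, and (C1) alone does not hand it to you. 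Worse, even when a decomposition is available, the resulting terms such as $(a_ia_j|c_lb)$ are mixed $A$--$C$--$B$ terms, not of the form $(a_*a_*|b_*b_*)$, and they are \emph{not} automatically bounded by $\rho(A,B)$: in the tree model with $A=\{0,1,2\}$, $B=\{10,11,1000\}$, $C=\{2000,2001,2002\}$ on a line one has $\rho(A,B)=10$ but $(a_1a_2|c_1b_1)=999$ for $b_1=1000$. So "pigeonhole among the three points of $B$" is not a finishing touch; the object that actually sits between the two pairs is the center (median) of $B$, which is not one of $b_1,b_2,b_3$ and has to be produced through a betweenness calculus (Bowditch's relation $(xy:u:zw)$, the path property, and his lemmas about approximate centers), or equivalently through the statement the paper itself flags as "the intuition": that a $k$-hyperbolic crossratio restricted to a subset of bounded cardinality agrees, up to bounded additive error, with the crossratio of a metric tree. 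That approximation (or its betweenness substitute) is the missing lemma; once it is in place all three bullets reduce to elementary facts about medians of triples in trees.

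The same criticism applies to the other two bullets. For the $4$-point condition you say that expanding the three pair sums and "combining (C1) with (C2)-additivity" gives the Gromov inequality up to a constant, but that combination is precisely the nontrivial content, and nothing in your sketch controls which of the twelve-point configurations (C1)/(C2) produce. For the path property, producing a chain with $\rho(z_s,z_{s+1})$ bounded is the easy half; the two-sided estimate $\rho(z_s,z_t)\simeq_k|s-t|$ needs a lower bound for non-consecutive indices, i.e.\ that the chain is taut, and "additivity from (C2) then upgrades this" again assumes the betweenness control you have not established. In short, your outline correctly identifies the tree heuristic and where the difficulty sits, but it defers the proof at exactly those places; to make it complete you should either quote Bowditch's Proposition 4.2 and Lemma 4.3 as the paper does, or first prove the bounded-configuration tree-approximation (or betweenness) lemma and then run your reductions through it.
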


\begin{proof} Proposition 4.2 and Lemma 4.3 of \cite{bo}.
\end{proof}

\begin{prop}
  If $(Q,\rho)$ is a hyperbolic path quasi-metric, for large $r>0$
  define the graph $G_r(Q)$ whose vertices are the points of $Q$ and
  two vertices $A,B$ are connected by an edge if $\rho(A,B)\leq
  r$. Then $G_r(Q)$ is a connected $\delta$-hyperbolic graph
  quasi-isometric to $(Q,\rho)$.
\end{prop}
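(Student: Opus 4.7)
The plan is to verify the three claims in turn: connectedness, quasi-isometry of the identity map to $(Q,\rho)$, and hyperbolicity, where the last is obtained by transporting hyperbolicity along the quasi-isometry.

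First I would use the path axiom to handle connectedness. Given $A,B\in Q$, choose a sequence $A=z_0,z_1,\dots,z_N=B$ with $\rho(z_i,z_j)\simeq_k|i-j|$. In particular $\rho(z_i,z_{i+1})\leq 1+k$, so if $r$ is chosen at least $1+k$ every consecutive pair $(z_i,z_{i+1})$ is joined by an edge of $G_r(Q)$. This gives a path of length $N$ between arbitrary vertices, so $G_r(Q)$ is connected. Moreover $N\simeq_k\rho(A,B)$, yielding the upper bound $d_{G_r}(A,B)\leq \rho(A,B)+k$ on the graph metric.

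For the other direction, if $A=A_0,A_1,\dots,A_n=B$ is an edge path in $G_r(Q)$, then $\rho(A_{i-1},A_i)\leq r$ for each $i$, and iterating the quasi-triangle inequality gives $\rho(A,B)\leq n(r+k)$. Combined with the previous bound this shows that the identity $(Q,\rho)\to (G_r(Q),d_{G_r})$ is a $(r+k,k)$-quasi-isometric embedding; since it is literally a bijection on vertices, it is a quasi-isometry.

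Finally, $(Q,\rho)$ is $k$-hyperbolic in the four-point (Gromov product) sense by the preceding proposition. Four-point hyperbolicity is a quasi-isometry invariant of arbitrary (quasi-)metric spaces, so $G_r(Q)$ is also four-point hyperbolic. Since $G_r(Q)$ is a graph, hence a geodesic space, four-point hyperbolicity is equivalent to the usual Rips thin-triangles condition, so $G_r(Q)$ is $\delta$-hyperbolic for some $\delta$ depending on $k$ and $r$.

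The only potential obstacle is constant-tracking: one must pick $r$ large enough (essentially $r\ge 1+k$) so the sequences supplied by the path axiom actually produce edges in $G_r(Q)$, and one must avoid accumulating the additive error $k$ in the iterated quasi-triangle inequality. Both are handled by a routine telescoping argument, so the proof is essentially bookkeeping once the path axiom is in hand.
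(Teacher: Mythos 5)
Your connectedness and quasi-isometry steps are correct and are exactly the intended route (the paper gives no argument of its own here; it cites the discussion before Lemma 3.1 of \cite{bo}, which proceeds the same way). The gap is in the final step. The assertion that four-point (Gromov product) hyperbolicity ``is a quasi-isometry invariant of arbitrary (quasi-)metric spaces'' is false: the four-point condition is an additive condition, while your identity map distorts $\rho$ multiplicatively (by a factor of order $r+k$ in one direction and essentially $1$ in the other), and for non-geodesic spaces such distortion can destroy hyperbolicity. For a concrete failure, take the graph of a piecewise-linear $1$-Lipschitz function in $\mathbb{R}^2$ with the restricted Euclidean (chordal) metric: vertical projection is a bi-Lipschitz map to $\mathbb{R}$ with multiplicative constant $\sqrt{2}$, yet for a function that is $0$ on $(-\infty,0]$, has slope $1$ on $[0,n]$ and is constant beyond, the four points $(-n,0),(0,0),(n,n),(2n,n)$ have their two largest pairwise sums $d(x,w)+d(y,z)$, $d(x,z)+d(y,w)$ differing by a quantity proportional to $n$, so the space is not hyperbolic although it is quasi-isometric (even bi-Lipschitz) to the line. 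Quasi-isometry invariance of hyperbolicity needs the spaces involved to be geodesic, or at least roughly geodesic; that is precisely what the ``path'' axiom supplies for $(Q,\rho)$, but in your write-up the path axiom is used only for connectedness and the upper bound, not where it is essential.

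The repair is standard but is not mere bookkeeping. Use the path axiom to record that $(Q,\rho)$ is roughly geodesic: the sequences $z_0,\dots,z_N$ are $k$-rough geodesics, and their images are uniform quasi-geodesic edge-paths in $G_r(Q)$. Then either invoke the version of quasi-isometry invariance valid for roughly geodesic hyperbolic spaces (stability of quasi-geodesics holds there), or argue directly in the geodesic space $G_r(Q)$: any geodesic of $G_r(Q)$ lies uniformly close to the image of such a rough geodesic of $(Q,\rho)$, and the thin-triangle (or four-point) condition for $G_r(Q)$ then follows from the four-point condition on $(Q,\rho)$ with a constant depending on $k$ and $r$. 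This is in effect what Bowditch does in the passage the paper cites; with that adjustment your proof is complete.
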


\begin{proof}
See the discussion before Lemma 3.1 in \cite{bo}.
\end{proof}

We shall refer to the graph $\X=G_r(Q)$ as the {\it Bowditch
  complex}. Note that the set of vertices of $\X$ is equipped with the
edge-path metric $d$ as well as with the quasi-metric $\rho$.

\section{A hyperbolic complex for \MCG}

Let \MCG denote the mapping class group of a fixed compact connected
surface. The standard reference for the material in this section is
\cite{FLP}. 
To motivate some of the arguments in the Main Theorem, we start by
discussing the
(somewhat simpler) version for \MCG.

\begin{thm}\label{MCG}
For any finite collection $f_1,\cdots,f_k$ of pseudo-Anosov mapping
classes there is connected $\delta$-hyperbolic graph $\X$ and an action
of \MCG such that:
\begin{itemize}
\item the stabilizer of a simple closed curve has bounded orbits, and
\item $f_1,\cdots,f_k$ have nonzero translation lengths.
\end{itemize}
\end{thm}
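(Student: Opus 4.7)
The idea is to apply Bowditch's construction (Section 2) with $M$ taken to be the locus of arational (filling) measured laminations in $\mathcal{PML}(\Sigma)$, and annulus system $\mathcal A$ generated by small annuli around the attracting/repelling laminations of the given $f_i$. The key dynamical properties driving the construction are that each pseudo-Anosov acts on $M$ with north--south dynamics and distinct fixed points $\lambda^\pm(f) \in M$, and that $MCG$ acts with good properness on $M$ (in particular, proper discontinuity on ordered triples of distinct points, via Klarreich's identification of the Gromov boundary of the curve complex with the quotient of $M$ by forgetting transverse measure).

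For the annulus system, fix for each $f_i$ small disjoint closed neighborhoods $U_i^\pm \subset M$ of $\lambda^\pm(f_i)$, so that $A_i := (U_i^-, U_i^+)$ is an annulus; after shrinking we may assume $f_i(A_i) < A_i$. Let $\mathcal A$ be the $MCG$-orbit of $\{A_i, -A_i : 1 \le i \le k\}$ together with all iterates $f_i^n A_i$.

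The main work is to verify axioms (A1) and (A2). For (A1), a putative infinite nested chain of annuli in $\mathcal A$ separating a pair $\{x,y\}$ from a pair $\{z,w\}$ of distinct points would, by pigeonholing, produce a subchain $h_1 A_j < h_2 A_j < \cdots$ for a fixed $j$; the north--south dynamics then forces triples $(h_n \lambda^-(f_j), h_n \lambda^+(f_j), \ast)$ to accumulate in a compact region of triples of distinct points of $M$, contradicting proper discontinuity of the $MCG$-action. Axiom (A2) follows from the tree-like combinatorics of nested annuli: if both cross-configurations $(xz|yw)$ and $(xw|yz)$ were large, two simultaneous and incompatible nested chains would coexist, forcing a bounded limit on the smaller of the two.

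Given (A1) and (A2), Section 2 produces the hyperbolic graph $\X$. Each $f_i$ has positive translation length on $\X$ because the strictly nested chain $\{f_i^n A_i\}_n$ contributes $\Theta(N)$ annuli separating a basepoint triple $P \in Q$ from $f_i^N \cdot P$. For the bounded orbit statement, a simple closed curve $\gamma$ itself is not arational, so $\gamma \notin M$; its stabilizer in $MCG$ is virtually generated by the Dehn twist $T_\gamma$ and the mapping class group of $\Sigma \setminus \gamma$, neither of which has two distinct fixed points in $M$ (the Dehn twist's only direction in $\mathcal{PML}$ is $\gamma$; subsurface mapping classes fix laminations supported off $\gamma$, which are non-arational). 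These elements therefore cannot create long nested chains of annulus translates, and so act on $\X$ with bounded orbits. The main obstacle is the simultaneous balancing in axiom (A1): $\mathcal A$ must be rich enough to give every $f_i$ positive translation length, yet sparse enough that nested chains between arbitrary distinct point-pairs stay finite. It is precisely the choice of $M$ as the arational locus---excluding non-arational laminations with infinite $MCG$-stabilizer, which would destroy (A1) along chains converging to them---that enables this balance.
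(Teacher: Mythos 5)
There are genuine gaps at the two places where the real work lies. First, your verification of (A1) does not go through. If $x,y<h_1A_j<\cdots<h_nA_j<z,w$, then $h_n^{-1}(x),h_n^{-1}(y)\in D^-$ and $h_n^{-1}(z),h_n^{-1}(w)\in D^+$; but the set of such configurations is \emph{not} a compact subset of the space of distinct triples (or quadruples), because the two points pushed into $D^-$ may collide --- indeed under north--south dynamics one expects them both to converge to $\Lambda_j^-$. So even granting proper discontinuity of $MCG$ on triples of distinct points of your $M$, no contradiction is produced; and that properness statement is itself not something Klarreich's identification of $\partial\mathcal C$ hands you, so it would need its own proof. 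The paper avoids this entirely: it shows \emph{directly} that only finitely many $g$ can satisfy $g^{-1}(a_i)\in D^-$, $g^{-1}(b_j)\in D^+$, using the estimate $I(g^{-1}a,g^{-1}b)\geq K\,L(g^{-1}a)L(g^{-1}b)$ forced by continuity of $I$ near $(\Lambda^-,\Lambda^+)$, together with the hypothesis $I(a_1,a_2)>0$, $I(b_1,b_2)>0$ and the filling property. This also exposes a problem with your choice of $M$: on the full arational locus two \emph{distinct} projective classes can have $I=0$ (distinct ergodic measures on one non-uniquely-ergodic lamination), and then the Case-2 step ``$L(g^{-1}x),L(g^{-1}y)\to 0$ implies $I(x,y)=0$, contradiction'' has no contradiction to reach. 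The paper takes $M$ to be the set of stable laminations of pseudo-Anosovs precisely so that distinct points of $M$ have positive intersection number.

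Second, (A2) is asserted, not proved: saying it ``follows from the tree-like combinatorics of nested annuli'' is circular, since (A2) \emph{is} the tree-likeness axiom. The paper proves it quantitatively: after normalizing lengths and shrinking $D^\pm$ so that length-normalized intersection numbers are $<\epsilon$ within $D^\pm$ and $\approx\mu>0$ across, the assumption $(ac|bd)>0$ and $(ad|bc)>0$ yields four ratio inequalities whose product gives $1\lesssim\epsilon^4/\mu^4$, a contradiction for small $\epsilon$. Finally, your bounded-orbits argument is a non sequitur: boundedness of the $S_a$-orbit of a triple $(p_1,p_2,p_3)$ in $\X$ concerns chains of annuli separating $\{p_i,p_j\}$ from $\{g(p_u),g(p_v)\}$, and those annuli are arbitrary $MCG$-translates of the $A_i$, not translates by elements of $S_a$; whether twists or subsurface classes themselves ``create'' nested chains is irrelevant. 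The paper's argument uses the hybrid finiteness statement $N=\max_{i\neq j}(a|p_ip_j)<\infty$ (Lemma \ref{hybrid}), the observation that at most one annulus in a chain contains $a$, and $g(a)=a$, to conclude that any such chain has length $\leq 2N+1$. Your positive-translation-length step (pumping the nested chain $f_i^nA_i$) is fine and matches the paper, but without correct proofs of (A1), (A2), and the stabilizer bound the theorem is not established.
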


We will call $\X$ the {\it Bowditch complex for }\MCG.

\subsection{Verifying (A1)-(A2)}
For now the space $M$ is Thurston's boundary $\mathcal{PML}$, i.e. the
space of projective measured laminations; it will
be made smaller later. Let $\Lambda^\pm_{i}$ be the stable and
unstable laminations for $f_i$ and choose small
neighborhoods $D_i^\pm$ of $\Lambda^\pm_i$
forming an annulus $A_i$. The annulus system consists of the
translates of $\pm A_i$ for $i=1,2,\cdots,k$.
By $I(-,-)$ denote the intersection number and by $L(\cdot)$
the length with respect to a fixed hyperbolic structure.

In this section we verify (A1) and (A2) (after passing to a smaller
$M$). To simplify notation, we assume $k=1$ and drop subscripts.

\begin{lemma}\label{1}
If $a$ and $b$ are simple closed curves then $(a|b)<\infty$.
\end{lemma}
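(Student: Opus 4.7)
The plan is a proof by contradiction using the intersection number.

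Suppose $(a|b)=\infty$; then there is an infinite chain $A_1<A_2<\cdots$ of distinct annuli with $a<A_i<b$ for all $i$, and each $A_i=g_i\cdot(\pm A)$ with $g_i\in\MCG$. After pigeonholing on the sign, I may assume $A_i=g_iA$, so $g_i^{-1}a\in D^-$ and $g_i^{-1}b\in D^+$ for infinitely many distinct $g_i$.

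First I would use the filling property of $(\Lambda^-,\Lambda^+)$. Since $I(\hat\Lambda^-,\hat\Lambda^+)>0$ (writing $\hat\mu=\mu/L(\mu)$ for the unit-length representative), by continuity of $I$ I can shrink $D^\pm$ to guarantee $I(\hat\mu,\hat\nu)\geq c>0$ for every $[\mu]\in\overline{D^-}$ and $[\nu]\in\overline{D^+}$. Applying this with $(\mu,\nu)=(g_i^{-1}a,\,g_i^{-1}b)$, the bilinearity and $\MCG$-invariance of $I$ give
\[
c\cdot L(g_i^{-1}a)\cdot L(g_i^{-1}b)\leq I(g_i^{-1}a,g_i^{-1}b)=I(a,b).
\]
If $I(a,b)=0$ (the case $a,b$ disjoint or equal), this is already a contradiction, because both lengths are bounded below by the systole of the surface. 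Otherwise both lengths are uniformly bounded above, so by finiteness of simple closed geodesics of bounded length on a fixed hyperbolic surface, after passing to a subsequence we have $g_i^{-1}a=a_0$ and $g_i^{-1}b=b_0$ for fixed simple closed curves $a_0,b_0$, and all these $g_i$ lie in a single left coset of $H:=\mathrm{Stab}(a)\cap\mathrm{Stab}(b)$.

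Finally, the annuli $g_iA$ are distinct and satisfy the chain condition $g_iA<g_{i+1}A$, which rewrites as $(g_{i+1}^{-1}g_i)\,D^+\cup D^-=M$: the consecutive ratios in the conjugate stabilizer $g_1^{-1}Hg_1$ must ``spread'' $D^+$ across all of $\mathcal{PML}$ outside $D^-$. The main obstacle I expect is ruling this out when $a,b$ intersect but do not fill, since then $H$ can contain subsurface mapping class groups and is infinite. The closing argument should invoke the North--South dynamics of $f$ near $\Lambda^\pm$ to show that any element $k$ with $kD^+\cup D^-=M$ exhibits strong $\Lambda^+$-attracting behaviour, and together with $k\in g_1^{-1}Hg_1$ and the fact that the fully irreducible pA $f$ fixes no simple closed curve, this can hold for only finitely many $k$ --- contradicting the assumed infinite chain.
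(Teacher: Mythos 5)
Your first two paragraphs are exactly the paper's argument: on $D^-\times D^+$ the normalized intersection number is bounded below, so $c\,L(g_i^{-1}a)L(g_i^{-1}b)\le I(a,b)$ either rules out any such $g_i$ at all (the case $I(a,b)=0$, which is Remark \ref{universal bound}) or gives uniform length bounds and hence finitely many possible geodesic pairs $(g_i^{-1}a,g_i^{-1}b)$. At that point the paper simply concludes ``since $a$ and $b$ fill, there are only finitely many such $g$,'' i.e.\ its proof only treats the filling case. You correctly spotted that when $a$ and $b$ intersect but do not fill, the pair stabilizer $H$ is infinite and finiteness of the $g_i$ fails; but this is precisely where your proposal has a genuine gap. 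The closing paragraph is a hope, not an argument: the claim that only finitely many $k\in g_1^{-1}Hg_1$ can satisfy $\mathrm{int}(kD^+)\cup\mathrm{int}(D^-)=M$ is never proved; the facts you invoke (north--south dynamics of $f$, ``$f$ fixes no simple closed curve'') concern $f$, not the element $k$, which lies in a curve stabilizer and has no a priori relation to $f$; and even granting that finiteness you would not be done, since finitely many values of the consecutive ratios $g_{i+1}^{-1}g_i$ do not preclude arbitrarily long chains --- you would have to argue about all ratios $g_j^{-1}g_i$ (which do satisfy the covering condition, because $<$ is transitive along a chain). Also, $(a|b)=\infty$ literally gives arbitrarily long finite chains rather than one infinite chain, though that is harmless.

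The missing case can be closed cheaply, using only the smallness of $D^\pm$ you already assumed (namely $u\in D^-$, $v\in D^+$ implies $I(u,v)>0$). Suppose two annuli of a chain are translates of $A$ with the same pair: $g_1A<g_2A$, $g_1^{-1}a=g_2^{-1}a=a_0\in D^-$, $g_1^{-1}b=g_2^{-1}b=b_0\in D^+$, so $h:=g_2g_1^{-1}\in H$. Since $a\cup b$ does not fill, let $\gamma$ be the (nonempty, essential) boundary multicurve of a regular neighborhood of the subsurface filled by $a\cup b$, weighted equally; then $I(\gamma,a)=I(\gamma,b)=0$ and the projective class of $\gamma$ is fixed by every element of $H$. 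Nesting says $\mathrm{int}(g_1D^+)\cup\mathrm{int}(g_2D^-)=\mathcal{PML}$, so either $g_1^{-1}\gamma\in \mathrm{int}\,D^+$, which with $g_1^{-1}a=a_0\in D^-$ forces $I(\gamma,a)>0$, or $g_2^{-1}\gamma=g_1^{-1}h^{-1}\gamma=g_1^{-1}\gamma\in\mathrm{int}\,D^-$, which with $g_2^{-1}b=b_0\in D^+$ forces $I(\gamma,b)>0$; either way a contradiction. Hence no two same-pair, same-sign annuli are ever nested, and since any chain is totally ordered by $<$ and there are only finitely many bounded-length pairs $(a_0,b_0)$ (and two signs), every chain between $a$ and $b$ has bounded length. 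So up to the point the paper goes your argument matches it; the extra case you rightly flagged needs an argument of this kind, which your sketch does not supply.
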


\begin{proof}
Suppose $g\in MCG$ such that $a<g(A)<b$. Then $g^{-1}(a)\in D^-$ and
$g^{-1}(b)\in D^+$. 
The expression
$$\frac{I(g^{-1}(a),g^{-1}(b))}{L(g^{-1}(a))L(g^{-1}(b))}$$
does not change if we scale $g^{-1}(a)$ or $g^{-1}(b)$. It follows, by
the continuity of $I$ and $L$, that this expression is close to 
$$\mu=\frac{I(\Lambda^+,\Lambda^-)}{L(\Lambda^+)L(\Lambda^-)}>0$$ and
in particular it is bounded away from 0.
But $I(g^{-1}(a),g^{-1}(b))=I(a,b)$ is fixed, so
it follows that both $L(g^{-1}(a))$ and $L(g^{-1}(b))$ are uniformly
bounded. Since $a$ and $b$ fill, there are only finitely many such
$g$.
\end{proof}

\begin{remark}\label{universal bound}
Note that when $a,b$ are disjoint simple closed curves then
$(a|b)=0$. This is because $u\in D^-$, $v\in D^+$ implies $I(u,v)>0$.
\end{remark}

\begin{lemma}\label{2}
Let $a_1,a_2,b_1,b_2$ be laminations in $\mathcal{PML}$ with
$I(a_1,a_2)>0$ and $I(b_1,b_2)>0$. Then $(a_1a_2|b_1b_2)<\infty$.
\end{lemma}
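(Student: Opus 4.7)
My plan is to mirror the proof of Lemma~\ref{1}, but to leverage the hypotheses $I(a_1,a_2)>0$ and $I(b_1,b_2)>0$ against the standard vanishing $I(\Lambda,\Lambda)=0$ for a measured lamination. As in Lemma~\ref{1}, by transitivity of ``$<$'' each annulus $A_i$ in a chain $\{a_1,a_2\}<A_1<\cdots<A_n<\{b_1,b_2\}$ also satisfies $\{a_1,a_2\}<A_i<\{b_1,b_2\}$, so it suffices to show that only finitely many $g\in MCG$ have $\{a_1,a_2\}<g(A)<\{b_1,b_2\}$ (the $-A$ case being symmetric).

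Suppose for contradiction that such $g_n$ form an infinite sequence. Then $g_n^{-1}(a_i)\in D^-$ and $g_n^{-1}(b_j)\in D^+$, and by compactness of $\mathcal{PML}$ I pass to a subsequence with $[g_n^{-1}(a_i)]\to[\Lambda^-]$ and $[g_n^{-1}(b_j)]\to[\Lambda^+]$. Fixing lifts of $a_i,b_j$ to $\mathcal{ML}$, the three intersection numbers $I(g_n^{-1}(a_i),g_n^{-1}(b_j))=I(a_i,b_j)$, $I(g_n^{-1}(a_1),g_n^{-1}(a_2))=I(a_1,a_2)>0$, and $I(g_n^{-1}(b_1),g_n^{-1}(b_2))=I(b_1,b_2)>0$ are all fixed along the sequence by $MCG$-invariance of $I$.

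Now I would run the scale-invariant ratio argument of Lemma~\ref{1} in two different ways. First, for each of the four pairs $(a_i,b_j)$, $I/(L\cdot L)\to\mu>0$, so each product $L(g_n^{-1}(a_i))L(g_n^{-1}(b_j))$ stays bounded, and hence so does the product of all four of them. Second, since $I(\Lambda^\pm,\Lambda^\pm)=0$, continuity gives $I(g_n^{-1}(a_1),g_n^{-1}(a_2))/\bigl(L(g_n^{-1}(a_1))L(g_n^{-1}(a_2))\bigr)\to 0$, and likewise for the $b$'s; with the numerators fixed and positive, this forces both $L(g_n^{-1}(a_1))L(g_n^{-1}(a_2))$ and $L(g_n^{-1}(b_1))L(g_n^{-1}(b_2))$ to infinity, and hence their product as well --- contradicting the first bound. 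The step most worth checking carefully is this second limit, which relies on the zero self-intersection of a measured lamination together with the joint continuity of the ratio $I(u,v)/(L(u)L(v))$ as a scale-invariant function on $\mathcal{PML}\times\mathcal{PML}$; the rest is a direct variation on Lemma~\ref{1}.
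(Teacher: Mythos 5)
The decisive step in your argument is the claim that, after passing to a subsequence, $[g_n^{-1}(a_i)]\to[\Lambda^-]$ and $[g_n^{-1}(b_j)]\to[\Lambda^+]$. Compactness of $\mathcal{PML}$ (or closedness of $D^\pm$) only gives convergence to \emph{some} points of $D^-$ and $D^+$; nothing forces the limits to be $\Lambda^\pm$ themselves, since the $g_n$ are arbitrary distinct mapping classes, not powers of $f$. Writing $a_i'=g_n^{-1}(a_i)$ and $b_j'=g_n^{-1}(b_j)$: if the limits are $u_1,u_2\in D^-$ with $I(u_1,u_2)>0$ (such pairs exist in any neighborhood, e.g.\ distinct curves near $\Lambda^-$), then $I(a_1',a_2')/(L(a_1')L(a_2'))$ converges to a positive number, so $L(a_1')L(a_2')$ stays bounded and your second limit --- hence the contradiction with the boundedness of the four cross products --- evaporates. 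Nor can this be rescued by shrinking $D^\pm$ so that the self-intersection ratio is $<\epsilon$ on $D^\pm\times D^\pm$: that only gives $L(a_1')L(a_2')>I(a_1,a_2)/\epsilon$, a fixed bound rather than divergence, and whether it contradicts the upper bound $L(a_1')L(a_2')L(b_1')L(b_2')\le I(a_1,b_1)I(a_2,b_2)/K^2$ depends on the particular laminations $a_i,b_j$, while $D^\pm$ (hence $\epsilon$ and $K$) are fixed before $a_i,b_j$ are given; when $I(a_1,a_2)$ and $I(b_1,b_2)$ are small compared to the cross intersections the inequality goes the wrong way.

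What your proposal is missing is exactly the case the paper treats as Case 1: along a subsequence the lengths $L(a_i')$, $L(b_j')$ stay bounded. In that regime no intersection-number inequality alone produces a contradiction; one needs a properness/finiteness input. The paper supplies it by choosing an auxiliary curve $c$, using the universal bound $I(x,y)\le C\,L(x)L(y)$ to bound $I(g(c),a_i)$ and $I(g(c),b_j)$, and then using that $a_i,b_j$ fill to bound $L(g(c))$ for every $c$, so only finitely many $g$ can occur --- this is precisely the finiteness step of Lemma \ref{1} that your proposal discards when it replaces it by divergence of the products. The degenerating case (the paper's Case 2, lengths tending to $0$) is the part your argument does capture, but the paper proves it with the bound $I\le C\,LL$ rather than continuity at $(\Lambda^-,\Lambda^-)$, and so needs no identification of the limits with $\Lambda^\pm$. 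To complete your proof you must add the bounded-length case and argue it by filling/properness as above.
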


\begin{proof}
Consider some $g\in MCG$ with $a_i<g(A)<b_j$, $i,j=1,2$. Then
$g^{-1}(a_i)\in D^-$ and $g^{-1}(b_j)\in D^+$. As above we have
$$I(g^{-1}(a_i),g^{-1}(b_j))\geq K~ L(g^{-1}(a_i))L(g^{-1}(b_j))$$
If there are infinitely many such $g$ then one of
the following cases occurs:

{\bf Case 1.} $L(g^{-1}(a_i))$ and $L(g^{-1}(b_j))$ are bounded above for
some choice $i,j\in\{1,2\}$ (and a subsequence of the $g$'s). Choose a
curve $c$ and note that both intersection numbers $I(c,g^{-1}(a_i))$
and $I(c,g^{-1}(b_j))$ are bounded, i.e. $I(g(c),a_i)$ and
$I(g(c),b_j)$ are both bounded (note that $I(\Lambda,\Lambda')\leq C~
L(\Lambda)L(\Lambda')$ for any two laminations, where $C$ is a
constant that depends only on the underlying hyperbolic
surface). Since $a_i$ and $b_j$ fill, this implies that $L(g(c))$ is
bounded. Since this is true for any $c$ it follows that there are only
finitely many $g$'s, contradiction.

{\bf Case 2.} Either $L(g^{-1}(a_i))\to 0$ for both $i=1,2$ or
$L(g^{-1}(b_j))\to 0$ for both $j=1,2$ (over a subsequence of
$g$'s). Say the former. Then $I(g^{-1}(a_1),g^{-1}(a_2))\to 0$,
i.e. $I(a_1,a_2)=0$, contradiction.
\end{proof}

There is also a hybrid situation:

\begin{lemma}\label{hybrid}
If $a$ is a curve and $b_1,b_2$ are laminations with $I(b_1,b_2)>0$
then $(a|b_1b_2)<\infty$.
\end{lemma}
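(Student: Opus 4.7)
My plan is to mimic the proof of Lemma \ref{2}, exploiting the new structural fact that $a$ is a simple closed curve. Suppose for contradiction that infinitely many $g\in MCG$ satisfy $a<g(A)<\{b_1,b_2\}$, equivalently $g^{-1}(a)\in D^-$ and $g^{-1}(b_j)\in D^+$ for $j=1,2$. The same continuity argument used in Lemma \ref{2} produces a constant $K>0$ with
$$L(g^{-1}(a))\,L(g^{-1}(b_j))\ \leq\ I(a,b_j)/K \qquad (j=1,2),$$
since the scale-invariant ratio $I(\cdot,\cdot)/(L(\cdot)L(\cdot))$ remains close to $\mu>0$ on the fixed neighborhood $D^-\times D^+$.

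The key new ingredient is that, because $a$ is a simple closed curve on a compact hyperbolic surface, $L(g^{-1}(a))$ is bounded below by the systole of the surface; consequently $L(g^{-1}(b_j))$ is automatically bounded above for both $j$. I would then pass to a subsequence on which $L(g^{-1}(a))$ either stays bounded or tends to $\infty$. In the bounded case, all three of $L(g^{-1}(a)), L(g^{-1}(b_1)), L(g^{-1}(b_2))$ are bounded, and Case 1 of Lemma \ref{2} applies: for any simple closed curve $c$, the bounds
$$I(g(c),a)\leq C\,L(c)\,L(g^{-1}(a)), \qquad I(g(c),b_j)\leq C\,L(c)\,L(g^{-1}(b_j))$$
are uniform in $g$, so $L(g(c))$ is bounded for every $c$ and only finitely many $g$ can occur. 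In the unbounded case, $L(g^{-1}(a))\to\infty$ forces $L(g^{-1}(b_j))\to 0$ for both $j$, and then
$$I(b_1,b_2)=I(g^{-1}(b_1),g^{-1}(b_2))\ \leq\ C\,L(g^{-1}(b_1))\,L(g^{-1}(b_2))\ \to\ 0,$$
contradicting $I(b_1,b_2)>0$.

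The main subtlety I expect is the filling step in the bounded case: concluding that $L(g(c))$ is bounded from bounded intersection numbers with $a$ and the $b_j$ requires that $a$ together with some $b_j$ fill the surface. For arbitrary laminations in $\mathcal{PML}$ this is not automatic, but it becomes so once $M$ is cut down (as the paper promises to do later) to the subspace of arational, filling laminations, where each $b_j$ fills on its own and hence $\{a,b_j\}$ fills trivially.
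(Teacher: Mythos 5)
Your argument is essentially identical to the paper's proof: the same scale-invariant ratio bound on $D^-\times D^+$, the same two-case split on whether $L(g^{-1}(a))$ stays bounded or tends to infinity, with Case 1 reduced to Case 1 of Lemma \ref{2} via filling and Case 2 contradicting $I(b_1,b_2)>0$. Your explicit use of the systole lower bound and your remark that the filling step really relies on restricting to the subset $M$ of (filling) stable laminations are both correct and merely make explicit what the paper leaves implicit.
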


\begin{proof}
Similar to the other two lemmas. We have
$$I(g^{-1}(a),g^{-1}(b_i))\geq K~ L(g^{-1}(a))L(g^{-1}(b_i))$$ for
$j=1,2$. 
There are now two cases.

{\bf Case 1.} $L(g^{-1}(a))$ stays bounded. Then both $L(g^{-1}(b_i))$,
$i=1,2$ are bounded as well by the above inequality 
Since $a$ and
$b_1$ fill, this restricts $g$ to a finite set, as in Case 1 of Lemma
\ref{2}.

{\bf Case 2.} $L(g^{-1}(a))\to\infty$. Then $L(g^{-1}(b_i))\to 0$,
$i=1,2$ and hence $I(b_1,b_2)=0$ as in Case 2 of Lemma \ref{2}.
\end{proof}

\begin{lemma}\label{A2}
If $D^\pm$ are chosen to be small enough then for any $a,b,c,d\in
\mathcal{PML}$ we have $(ac|bd)=0$ or $(ad|bc)=0$.
\end{lemma}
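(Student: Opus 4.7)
The plan is to argue by contradiction. Assume both $(ac|bd) \geq 1$ and $(ad|bc) \geq 1$: there exist $g, h \in MCG$ (and a choice of sign for each annulus) such that $\{a,c\} < g(\pm A) < \{b,d\}$ and $\{a,d\} < h(\pm A) < \{b,c\}$. Unpacking the definitions, $g^{-1}\{a,c\}$ lies in one of $D^\pm$ and $g^{-1}\{b,d\}$ in the other, and similarly for $h$. The four sign combinations are handled symmetrically, so I assume $g^{-1}(a), g^{-1}(c) \in D^-$, $g^{-1}(b), g^{-1}(d) \in D^+$, and $h^{-1}(a), h^{-1}(d) \in D^-$, $h^{-1}(b), h^{-1}(c) \in D^+$. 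The crucial feature is that the ``same-side'' pair (the two points sent into a common $D^\pm$) differs between $g$ and $h$: for $g$ it is $\{(a,c),(b,d)\}$, for $h$ it is $\{(a,d),(b,c)\}$.

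The main tool, as in Lemmas \ref{1}--\ref{hybrid}, is the projective intersection pairing $i([u],[v]) := I(\tilde u, \tilde v)/(L(\tilde u) L(\tilde v))$, which is scale-invariant in each argument and hence descends to a continuous function on $\mathcal{PML} \times \mathcal{PML}$, with $\mu := i(\Lambda^-, \Lambda^+) > 0$ since $\Lambda^\pm$ fill. For any preassigned small $\e > 0$, continuity of $i$ lets me shrink $D^\pm$ so that $i(u,v) \geq \mu/2$ (and in fact $i(u,v)$ is within any chosen factor of $\mu$) on mixed pairs $(u,v)\in D^-\times D^+$, and $i(u,v) \leq \e$ on same-side pairs. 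Choosing measured-lamination representatives $\tilde a, \tilde b, \tilde c, \tilde d$ and setting $L_g(x) := L(g^{-1}\tilde x)$, the $MCG$-invariance $I(g^{-1}\tilde x, g^{-1}\tilde y) = I(\tilde x, \tilde y)$ translates the above inequalities into $L_g(x) L_g(y) \asymp I(\tilde x, \tilde y)/\mu$ for the four mixed pairs $(a,b),(a,d),(c,b),(c,d)$, and $L_g(x) L_g(y) \geq I(\tilde x, \tilde y)/\e$ for the two same-side pairs $(a,c),(b,d)$. Analogous relations hold for $h$ with the same/mixed roles reshuffled.

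Now compute the product $L_g(a) L_g(b) L_g(c) L_g(d)$ via the two different mixed-mixed groupings: this yields $I(a,b) I(c,d) \asymp I(a,d) I(b,c)$ up to a factor tending to $1$. Comparing to the product via the same-side pairs yields $I(a,c) I(b,d) \lesssim (\e/\mu)^2 I(a,b) I(c,d)$. Running the same argument for $h$, whose same-side pairs are $(a,d),(b,c)$, instead produces $I(a,b) I(c,d) \asymp I(a,c) I(b,d)$ (again up to a factor close to $1$). Combining, $I(a,c) I(b,d)$ is both comparable to and $(\e/\mu)^2$-smaller than $I(a,b) I(c,d)$; for $\e$ small enough relative to $\mu$ this forces $I(a,b) I(c,d) = 0$, contradicting the mixed-pair estimate for $g$, which gives $I(a,b), I(c,d) > 0$. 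The main obstacle is not any deep analysis but rather bookkeeping: one has to identify the same-side pairs under $g$ and under $h$ correctly and track that the multiplicative constants floating around stay close to $1$ while $(\e/\mu)^2\to 0$. The key insight driving the argument is that the projective intersection number, scale-invariant and $MCG$-quasi-invariant through the length factors, is the natural quantity to play off against itself here.
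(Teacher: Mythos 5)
Your proposal is correct and follows essentially the same route as the paper: both exploit the scale-invariant ratio $I(\cdot,\cdot)/(L(\cdot)L(\cdot))$, its continuity near $(\Lambda^\pm,\Lambda^\pm)$, and the $MCG$-invariance of $I$ against the non-invariance of $L$, then play the same-side bounds ($\leq\epsilon$) against the mixed bounds ($\sim\mu$) multiplicatively to reach a contradiction of the form $1\lesssim(\epsilon/\mu)^{\text{power}}$. The only difference is bookkeeping (you compare the product $L(a)L(b)L(c)L(d)$ computed two ways for each of $g$ and $h$, getting a power $2$, while the paper multiplies four ratio inequalities to get a power $4$), which is immaterial.
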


\begin{proof}
Scale each lamination in $\mathcal{PML}$ so that its length is 1 (with respect
to a fixed hyperbolic metric). Choose $D^\pm$ so that when
$x,y\in D^+$ (or $D^-$) then $I(x,y)<\epsilon$ and if $x\in D^+$ and
$y\in D^-$ then $|I(x,y)-I(\Lambda^+,\Lambda^-)|<\epsilon$. This is
possible by the continuity of the intersection number. 
Note that we could also write e.g.
$$\frac{I(x,y)}{{L(x)L(y)}}<\epsilon$$
for the first inequality and this is invariant under scaling $x$ and
$y$. 

Now assume $(ac|bd)>0$ and $(ad|bc)>0$. Then for some $g_1,g_2\in
MCG$ we have $a_1,c_1,a_2,d_2\in D^-$, $b_1,d_1,b_2,c_2\in D^+$, where
$a_i=g_i(a)$ etc. Thus we have
$$\frac{I(a_1,c_1)}{{L(a_1)L(c_1)}}<\epsilon$$
and
$$\frac{I(a_2,c_2)}{{L(a_2)L(c_2)}}\sim \mu$$
where
$\mu=\frac{I(\Lambda^+,\Lambda^-)}{L(\Lambda^+)L(\Lambda^-)}$. Dividing
the two inequalities and noting that $I(a_1,c_1)=I(a_2,c_2)$ gives
$$\frac{L(a_2)L(c_2)}{L(a_1)L(c_1)}\overset<\sim\epsilon/\mu$$
Similarly we have
$$\frac{L(b_2)L(d_2)}{L(b_1)L(d_1)}\overset<\sim\epsilon/\mu\quad\quad
\frac{L(a_1)L(d_1)}{L(a_2)L(d_2)}\overset<\sim\epsilon/\mu\quad\quad
\frac{L(b_1)L(c_1)}{L(b_2)L(c_2)}\overset<\sim\epsilon/\mu$$
Multiplying gives the contradiction $1\overset<\sim\epsilon^4/\mu^4$
(for small $\epsilon$).
\end{proof}

\begin{proof}[Proof of Theorem \ref{MCG}]
Let $M\subset\mathcal{PML}$ be the subset consisting of stable laminations
$\Lambda_g^+$ as $g$ varies over all pseudo-Anosov
homeomorphisms in \MCG. The annulus system will be the restriction to
$M$ of the annulus system considered above. Since distinct elements of
$M$ have nonzero intersection number, Lemma \ref{2} verifies (A1), and
Lemma \ref{A2} verifies (A2).
The resulting Bowditch complex $\X$ is hyperbolic. The statements
about orbits and translation lengths are verified in the next section.
\end{proof}

\subsection{Orbits in $\X$}

\begin{prop}\label{orbits}
  The stabilizer in \MCG of a simple closed curve has bounded
  orbits. The original pseudo-Anosov homeomorphisms $f_i$ have nonzero
  translation lengths.
\end{prop}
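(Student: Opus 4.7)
The plan is to handle the two claims separately: the lower bound on the translation lengths of the $f_i$ is produced directly by exhibiting long nested chains of annuli, while the upper bound on curve--stabilizer orbits comes from inserting the fixed curve $a$ into the crossratio as an auxiliary point.

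\emph{Translation length of $f_i$.} Fix $f = f_i$ with associated annulus $A = (D^-, D^+)$ and a reference triple $V = (x,y,z) \in Q$ whose entries all differ from $\Lambda^\pm$. North--south dynamics of $f$ on $\mathcal{PML}$ guarantees that for all sufficiently large $N$, $f^N$ sends $\mathcal{PML} \setminus D^-$ into $D^+$, which is exactly the condition $A < f^N(A)$; iterating gives a nested chain
$$A < f^N(A) < f^{2N}(A) < \cdots.$$
The same dynamics supplies $J = J(V)$ such that $f^{-jN}(x), f^{-jN}(y) \in D^-$ whenever $jN \geq J$ and $f^{n-jN}(x), f^{n-jN}(y) \in D^+$ whenever $n - jN \geq J$, hence
$$\{x,y\} \,<\, f^{jN}(A) \,<\, \{f^n(x), f^n(y)\} \quad \text{for all } j \text{ with } J \leq jN \leq n-J.$$
This yields at least $(n - 2J)/N$ annuli in a chain, so $(xy \mid f^n(x) f^n(y)) \geq (n-2J)/N$ and $\rho(V, f^n V)$ grows linearly in $n$. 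Since $\X$ is quasi-isometric to $(Q, \rho)$, $f$ has positive translation length on $\X$.

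\emph{Boundedness of $\mathrm{Stab}(a)$-orbits.} Let $h \in \mathit{MCG}$ stabilize a simple closed curve $a$, and fix $V = (p_1, p_2, p_3) \in Q$. I want to bound each entry of
$$\rho(V, hV) = \max_{i \neq j,\ k \neq l}(p_i p_j \mid h p_k \cdot h p_l).$$
The crossratio defined by counting annuli in $\mathcal{A}$ is well-defined on all of $\mathcal{PML}$, so one may insert $a$ as an auxiliary point. By Lemma \ref{hybrid}, $(a \mid p_i p_j) < \infty$ for each pair of distinct entries of $V$ (the condition $I(p_i,p_j)>0$ holds since $p_i,p_j$ are distinct pseudo-Anosov laminations); let $C = \max_{i \neq j}(a \mid p_i p_j)$. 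Since $h(a) = a$, the change of variables $g' = h^{-1} g$ in the definition of the crossratio gives
$$(a \mid h p_k \cdot h p_l) = (h^{-1} a \mid p_k p_l) = (a \mid p_k p_l) \leq C,$$
uniformly in $h$. The Bowditch axioms (A1), (A2) hold on the five-point set $\{p_i,p_j,h p_k, h p_l, a\}$ by Lemmas \ref{2}, \ref{hybrid} and \ref{A2}, so Proposition~6.5 of \cite{bo} supplies (C1) and (C2) there. The resulting tree-approximation of this finite subset yields the approximate triangle inequality
$$(p_i p_j \mid h p_k \cdot h p_l) \;\leq\; (p_i p_j \mid a) + (a \mid h p_k \cdot h p_l) + O(1) \;\leq\; 2C + O(1),$$
with additive error depending only on the hyperbolicity constant. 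The bound is independent of $h$ and of $(i,j,k,l)$, so $\mathrm{Stab}(a)$ has bounded orbits on $\X$.

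\emph{Main obstacle.} The substantive point is the approximate triangle inequality $(xy \mid zw) \leq (xy \mid u) + (u \mid zw) + O(1)$ for a hyperbolic crossratio with an intermediate point $u$. In the tree model this is the tautology $d([x,y],[z,w]) \leq d([x,y], u) + d(u, [z,w])$, but here it has to be extracted from (C1)--(C2); this is exactly the kind of tree-approximation statement the Bowditch framework is designed to deliver. Once it is available, the rest is a direct application of the finiteness lemmas \ref{1}, \ref{2}, \ref{hybrid} and the algebraic observation that $h$ fixes $a$ setwise.
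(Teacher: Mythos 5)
Your proposal is correct and follows essentially the same route as the paper: pumping via north--south dynamics gives linear growth of $\rho(V,f^nV)$ for the translation lengths, and the stabilizer bound comes from inserting the fixed curve $a$, using Lemma \ref{hybrid} together with the equivariance $(a|h p_k\, h p_l)=(a|p_kp_l)$. The one step you flag as the main obstacle, $(xy|zw)\leq (xy|u)+(u|zw)+O(1)$, is in the paper not extracted from (C1)--(C2) but obtained by direct annulus counting (at most one annulus in a chain contains $a$; the remaining ones split into two subchains), i.e.\ Bowditch's Lemma 6.1 with additive error exactly $1$, as the remark after the paper's proof points out.
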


\begin{proof}
By construction,
$(\Lambda_f^+|\Lambda_f^-)>0$. Then by pumping (i.e. using North-South
dynamics) 
$(\Lambda_f^+|\Lambda_f^-)=\infty$ and in fact
$$d((a,b,c),(f^m(a),f^m(b),f^m(c)))$$ grows linearly. This proves that
$f$ has nonzero translation length.

Now consider the stabilizer $S_a$ of a curve $a$. Fix a triple
$(p_1,p_2,p_3)\in \X$. By Lemma \ref{hybrid} we know that
$N=\max_{i\neq j}(a|p_ip_j)<\infty$. If $g\in S_a$, consider a
collection of $D$ disjoint annuli separating $p_i,p_j$ from
$g(p_u),g(p_v)$ for some $i\neq j$ and $u\neq v$. At most one of these
contains $a$ ($B=(B^-,B^+)$ {\it contains} $a$ if $a\in M-(B^-\cup
B^+)$); remove it. Thus at least $\frac{D-1}2$ separate $a$ from
$p_i,p_j$ or from $g(p_u),g(p_v)$ and we deduce $N\geq
(a|p_ip_j)\geq\frac{D-1}2$ or $(a|g(p_u)g(p_v))\geq\frac{D-1}2$. But
$(a|g(p_u)g(p_v))=(a|p_up_v)\leq N$ since $g(a)=a$, so in any case
$D\leq 2N+1$.
\end{proof}

\begin{remark}\label{orbitsize}
This argument shows that the orbit of $A=(p_1,p_2,p_3)$ under $S_a$
has ($\rho$-)diameter at most $2\max_{i\neq j}(a|p_ip_j)+1$. Note also that
$\max_{i\neq j}(a|p_ip_j)$ is a lower bound for the diameter of the
orbit, by considering iterations by the Dehn twist in $a$ and using
the fact that the iterates of $p_1,p_2,p_3$ converge to $a$.
\end{remark}

\begin{remark}
In the above proof we used the triangle type inequality
$$(A|B)\leq (A|x)+(x|B)+1$$
where $x\in M$ and $A,B\subset M$. This is \cite[Lemma 6.1]{bo}.
\end{remark}

\subsection{Comparing the Bowditch complex $\X$ and the curve complex
  $\C$} \label{comparison}

Recall that if a group acts isometrically on a $\delta$-hyperbolic
geodesic space with bounded orbits, then there is an orbit
of diameter $\leq 8\delta$. Thus there are some $K_\rho>0$ and $K_d>0$
such that the stabilizer $S_a\subset MCG$ of $a$ has an orbit of vertices
in $\X$ of $\rho$-diameter $\leq K_\rho$ and $d$-diameter $\leq K_d$,
for any curve $a$ (e.g. $K_d$ can be taken to be $8\delta+1$ if $\X$ is
$\delta$-hyperbolic with respect to $d$).

Define $\Phi:\C\to \X$ by the rule that $\Phi(a)$ is a triple $(p,q,r)$
that belongs to such an orbit.

\begin{lemma}\label{Phi}
$\Phi$ is coarsely well defined and it is Lipschitz.
\end{lemma}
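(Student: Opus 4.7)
The strategy is to derive both claims from the triangle-type inequality $(A|B) \leq (A|x) + (x|B) + 1$ recorded in the remark preceding this lemma, combined with the lower bound from Remark \ref{orbitsize} that a triple $A = (p_1,p_2,p_3)$ lying in an $S_a$-orbit of $\rho$-diameter at most $K_\rho$ must satisfy $\max_{i\neq j}(a|p_ip_j) \leq K_\rho$. In both parts I will bound the $\rho$-distance first and then pass to the edge-path metric $d$ via the quasi-isometry between them.

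For coarse well-definedness, I would take two choices $A = (p_1,p_2,p_3)$ and $A' = (p_1',p_2',p_3')$ for $\Phi(a)$, both lying in $S_a$-orbits of $\rho$-diameter $\leq K_\rho$. For any indices $i \neq j$ and $k \neq l$, inserting the single point $a$ in the triangle inequality gives
$$(p_ip_j\, |\, p_k'p_l') \leq (p_ip_j\,|\,a) + (a\,|\,p_k'p_l') + 1 \leq 2K_\rho + 1,$$
so $\rho(A,A')$ is bounded by a universal constant, and the same is true of $d(A,A')$.

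For the Lipschitz estimate it suffices to bound $d(\Phi(a),\Phi(b))$ when $a$ and $b$ span an edge of $\C$, i.e.\ represent disjoint simple closed curves. Writing $\Phi(a) = A$ and $\Phi(b) = B = (q_1,q_2,q_3)$, two iterations of the triangle inequality, first through $a$ and then through $b$, give
$$(p_ip_j\,|\,q_kq_l) \leq (p_ip_j\,|\,a) + (a\,|\,b) + (b\,|\,q_kq_l) + 2.$$
By Remark \ref{universal bound} disjoint simple closed curves satisfy $(a|b) = 0$, while the remaining two terms are each $\leq K_\rho$ by the observation from Remark \ref{orbitsize} above. Taking the maximum over $i,j,k,l$ yields $\rho(A,B) \leq 2K_\rho + 2$, which translates to a uniform bound on $d(\Phi(a),\Phi(b))$, proving the Lipschitz property.

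I do not anticipate a serious obstacle: both parts reduce to bookkeeping with the triangle inequality once Remarks \ref{orbitsize} and \ref{universal bound} are in hand. The only point meriting attention is tracking the additive constants coming from repeated applications of $(A|B) \leq (A|x)+(x|B)+1$, but these remain bounded independently of $a$, $b$, and the choice of $\Phi$.
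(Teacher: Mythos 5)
Your proposal is correct and follows essentially the same route as the paper: both arguments rest on the bound $\max_{i\neq j}(a|p_ip_j)\leq K_\rho$ coming from the lower-bound observation in Remark \ref{orbitsize}, the vanishing $(a|b)=0$ from Remark \ref{universal bound}, and chained applications of the triangle-type inequality $(A|B)\leq (A|x)+(x|B)+1$, yielding the same $2K_\rho+2$ estimate. The only cosmetic difference is that the paper treats coarse well-definedness as a special case of the Lipschitz bound, whereas you verify it separately with one application of the triangle inequality through $a$.
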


\begin{proof}
We need to check that different choices for $\Phi(a)$ are close, but
this is a special case of the Lipschitz condition. Suppose $a,b$ are
disjoint curves and let $\Phi(a)=(p_1,p_2,p_3)$,
$\Phi(b)=(q_1,q_2,q_3)$. Then $\max_{i\neq j}(a|p_ip_j)\leq K_\rho$
and $\max_{u\neq v}(b|q_uq_v)\leq K_\rho$ and since $(a|b)=0$ 
(see Remark \ref{universal bound})
we have $$\max_{i\neq j}(p_ip_j|b)\leq K_\rho+1$$ by the triangle
inequality. Thus 
\begin{multline*}
\rho(p_1p_2p_3,q_1q_2q_3)=\max_{i\neq j,u\neq
  v}(p_ip_j|q_uq_v)\leq\\ \max_{i\neq j}(p_ip_j|b)+\max_{u\neq
  v}(b|q_uq_v)+1\leq 2K_\rho+2
\end{multline*}
\end{proof}

\begin{lemma}\label{coarsely onto}
If $D^{\pm}$ are chosen small enough then $\Phi$ is coarsely onto.
\end{lemma}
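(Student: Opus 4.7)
My plan is to reduce the coarse surjectivity to the following assertion:

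\textbf{Claim.} For every vertex $T=(p_1,p_2,p_3)\in Q$ there is a simple closed curve $a$ with $\max_{i\neq j}(a|p_ip_j)\leq N$, where $N$ is a constant independent of $T$.

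Granting the claim, coarse surjectivity follows quickly. By Remark \ref{orbitsize} the $S_a$-orbit of $T$ has $\rho$-diameter at most $2N+1$, and the $S_a$-orbit used to define $\Phi(a)$ has $\rho$-diameter at most $K_\rho$. Applying the triangle-type inequality $(A|B)\leq(A|x)+(x|B)+1$ componentwise (as in the last display of the proof of Lemma \ref{Phi}) gives $\rho(T,\Phi(a))\leq 2N+K_\rho+O(1)$, which translates into a uniform $d$-bound since $\X=G_r(Q)$ is quasi-isometric to $(Q,\rho)$.

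To produce the curve $a$, I will use that simple closed curves are dense in $\mathcal{PML}$ and choose $a$ to approximate $p_1$. After rescaling so that $L(a)=L(p_1)=1$, continuity of the intersection form together with $I(p_1,p_1)=0$ lets me arrange $I(a,p_1)<\delta$ for any preassigned $\delta>0$, and I will calibrate $\delta$ to be much smaller than the parameter $\varepsilon$ that controlled the size of $D^\pm$ in Lemma \ref{A2}. The three crossratios are then bounded as follows. For $(a|p_1p_j)$ with $j\in\{2,3\}$, any $g$ contributing to the chain gives $g^{-1}(a)\in D^-$ and $g^{-1}(p_1)\in D^+$, and running the inequalities of Lemma \ref{A2} with this pair yields $L(g^{-1}(a))L(g^{-1}(p_1))\overset{<}{\sim} I(a,p_1)/\mu<\delta/\mu$, while the constraint on the pair $(g^{-1}(p_1),g^{-1}(p_j))\in D^+\times D^+$ gives $L(g^{-1}(p_1))L(g^{-1}(p_j))>I(p_1,p_j)/\varepsilon$. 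Dividing and using that $I(a,p_j)\approx I(p_1,p_j)$ (again by continuity and closeness of $a$ to $p_1$) forces $L(g^{-1}(a))$ to be tiny, and this, combined with the finiteness of the set of short curves on the surface and the familiar ``fill'' argument from Lemma \ref{1}, pins down $g$ up to a finite ambiguity that is controlled by $\delta$ and $\varepsilon$ alone. For $(a|p_2p_3)$, the small-$\delta$ choice makes the conditions $g^{-1}(a)\in D^-$ and $g^{-1}(p_1)\in D^-$ equivalent up to bounded error, so each contributing $g$ also contributes to $(p_1|p_2p_3)$; the uniform bound on the latter then comes from repeating the argument of Lemma \ref{2} with the cascade of length-ratio inequalities of Lemma \ref{A2}, whose product is controlled by a power of $\varepsilon/\mu$ independent of the particular triple.

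The main obstacle is \emph{uniformity}: Lemmas \ref{1}, \ref{hybrid} and \ref{2} give finiteness of the relevant crossratios for each fixed configuration, but the proofs do not, on their face, provide constants independent of the input. The strategy for overcoming this is to exploit that choosing $D^\pm$ small (as in Lemma \ref{A2}) forces the product of four length-ratio inequalities to be bounded by $(\varepsilon/\mu)^4$, so that the number of annuli in any separating chain becomes a function of $\varepsilon/\mu$ alone, and to combine this with the quantitative approximation $I(a,p_1)<\delta\ll\varepsilon$ which makes the ``bad'' cases degenerate into a controlled configuration involving only finitely many short curves on the surface.
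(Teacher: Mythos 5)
Your reduction of coarse surjectivity to the Claim is sound and matches how the paper concludes once a suitable curve is in hand, but the construction of $a$ has a genuine gap. You choose $a$ to approximate $p_1$ in $\mathcal{PML}$, so $(a|p_2p_3)$ is essentially $(p_1|p_2p_3)$, and this quantity is \emph{not} uniformly bounded over triples in $M$: fixing $p_2,p_3$ and taking $p_1=f^m(q)$ for large $m$, the nested translates $f(A)<f^2(A)<\cdots$ separate $\{p_2,p_3\}$ from $p_1$ in number growing linearly in $m$ (this unboundedness is exactly what makes $f$ have positive translation length in Proposition \ref{orbits}). No calibration of $\delta$ against $\varepsilon$ can repair this: the multiplicative trick of Lemma \ref{A2}, whose product of four ratio inequalities is at most $(\epsilon/\mu)^4$, only rules out two \emph{crossing} crossratios being simultaneously positive; it never bounds a single crossratio such as $(p_1|p_2p_3)$, and the arguments of Lemmas \ref{1}, \ref{2} and \ref{hybrid} give finiteness for each fixed configuration but no uniform constant. (A secondary weak point: smallness of $L(g^{-1}(a))$ only restricts $g^{-1}(a)$ to finitely many short curves, not $g$ to finitely many elements, since curve stabilizers are infinite.) So the Claim, as you propose to prove it, fails at exactly the uniformity issue you flagged.

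The paper supplies the missing ingredient: after scaling $p_1,p_2,p_3$ so that the three pairwise intersection numbers all equal $1$, Bowditch's lemma (Lemmas 5.1 and 5.3 of \cite{bo:hyp}) produces a curve $a$ with $I(a,p_i)\leq 2R$ for all $i$, where $R$ depends only on the surface; this $a$ sits near the \emph{center} of the ideal triangle spanned by $p_1,p_2,p_3$, not near one of its vertices. For such an $a$ one shows $(a|p_ip_j)=0$ for all $i\neq j$ once $D^\pm$ is small: if $g^{-1}(a)\in D^-$ and $g^{-1}(p_i),g^{-1}(p_j)\in D^+$, then combining $I(\tilde a,\tilde p_i)/(L(\tilde a)L(\tilde p_i))\sim\mu$ (and the same for $j$) with $I(\tilde p_i,\tilde p_j)/(L(\tilde p_i)L(\tilde p_j))<\epsilon$ and the normalizations $I(p_i,p_j)=1$, $I(a,p_i)\leq 2R$ yields $L(\tilde a)\lesssim 2R\sqrt{\epsilon}/\mu$, contradicting the lower bound on lengths of closed curves on the fixed hyperbolic surface. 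To salvage your approach you would need a replacement for this step: a curve with uniformly bounded normalized intersection with all three laminations simultaneously; a curve approximating a single $p_i$ cannot serve this purpose.
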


\begin{proof}
  Let $(p_1,p_2,p_3)$ be a triple in $M$. Scale them so that all 3
  intersection numbers are equal, say to $1$. Then by Bowditch's lemma
  (see Lemmas 5.1 and 5.3 of \cite{bo:hyp}, or for another exposition
  see \cite[Lemma 5.7]{glasnik}) there is a curve $a$ such that
  $I(a,p_i)\leq R$, $i=1,2$ for a constant $R$ that depends only on
  the surface, and moreover, $I(a,z)\leq R(I(p_1,z)+I(p_2,z))$ for all
  laminations $z$ (this result is stated in the above references only
  for multicurves, but it extends easily to laminations). By putting
  $z=p_3$ we see that $I(a,p_3)\leq 2R$. (By the Bowditch's proof of
  hyperbolicity of $\C$, $a$ is near
  the center of the ideal triangle in the curve complex with vertices
  at infinity corresponding to $p_1,p_2,p_3$.)

We now claim that $(a|p_ip_j)=0$ for $i\neq j$ if $D^{\pm}$ are chosen
small. Thus $\Phi(a)$ is close to $(p_1,p_2,p_3)$.

Suppose $(a|p_ip_j)>0$. Then there is $g\in MCG$ so that $\tilde
a=g^{-1}(a)\in D^-$ and $\tilde p_i=g^{-1}(p_i)\in D^+$, $\tilde
p_j=g^{-1}(p_j)\in D^+$. As in Lemma \ref{A2} we have:
\begin{equation}
\frac{I(\tilde a,\tilde p_i)}{{L(\tilde a)L(\tilde
    p_i)}}\sim\mu\qquad\frac{I(\tilde a,\tilde p_j)}{{L(\tilde a)L(\tilde
    p_j)}}\sim\mu 
\end{equation}
and 
\begin{equation}
\frac {I(\tilde p_i,\tilde p_j)}{{L(\tilde p_i)L(\tilde
    p_j)}}<\epsilon
\end{equation}
Dividing (2) by each of the equations in (1) and taking into account
$I(\tilde p_i,\tilde p_j)=I(p_i,p_j)=1$ and $I(\tilde a,\tilde
p_i)=I(a,p_i)\leq 2R$, $I(\tilde a,\tilde
p_j)\leq 2R$ gives
$$L(\tilde a)\overset\sim <2R\sqrt\epsilon/\mu$$
which is a contradiction for small $\epsilon$ (we are on a fixed
hyperbolic surface).
\end{proof}

\begin{remark}\label{ken}
  Suppose $f$ and $g$ are two pseudo-Anosov elements of \MCG such that
  no nontrivial power of $f$ is conjugate to a power of $g$, then
  there are neighborhoods $U^\pm$ of $\Lambda_f^\pm$ such that there
  is no $h\in MCG$ with $h(\Lambda_g^\pm)\in U^\pm$. There are two
  proofs of this claim, both modelled on the proof of a similar
  assertion for two hyperbolic elements in a discrete subgroup of
  $SO(n,1)$. Indeed, the existence of $h$ forces the geodesics
  associated to $f$ and $g$ in the orbit space to be very close, which
  is impossible since they are distinct closed geodesics (as sets).
  There are two variants of this argument for \MCG, one using the
  Teichm\"uller metric and the other the Weil-Petersson metric. If
  $\mathcal F_f^\pm$ are the stable and unstable measured foliations
  associated with $f$, then the Teichm\"uller axis $A_f$ of $f$
  consists of conformal structures obtained from Euclidean metrics
  with singularities of the form $ds^2=e^{2t}d\mu_+^2+e^{-2t}d\mu_-^2$
  where $\mu_\pm$ are the measures on $\mathcal F_f^\pm$ and
  $t\in\R$. If $\mathcal F_{h_igh_i^{-1}}^\pm \to \mathcal F_f^\pm$ as
  measured foliations then clearly $A_{h_igh_i^{-1}}\to A_f$ uniformly on
  compact sets and we have a contradiction as before. The
  Weil-Petersson version uses the fact that pseudo-Anosov elements of
  \MCG have unique geodesic axes plus a theorem of Brock-Masur-Minsky
  \cite[Corollary 1.6]{BMM}, which says that axes are close when the
  associated stable and unstable laminations are close in
  $\mathcal{PML}$.

It follows that whenever we are given $g_1,g_2,\cdots,g_m\in \MCG$ such
that nontrivial powers of $g_j$ are not conjugate to powers of $f_i$'s
then by choosing $D_i^\pm$'s sufficiently small we can arrange that
the $g_j$'s have bounded orbits. This is because
$(\Lambda_{g_j}^+|\Lambda_{g_j}^-)=0$ by construction, so if we take
any $\Lambda\in\mathcal{PML}-\{\Lambda_{g_j}^\pm\}$ and set
$x=(\Lambda_{g_j}^+,\Lambda_{g_j}^-,\Lambda)$ then
$\rho(x,g_j^N(x))=0$ for any $N$. 

Now suppose that $\mathcal X_1,\mathcal X_2,\cdots$ is a sequence of
Bowditch complexes obtained by taking smaller and smaller
neighborhoods $D_i^\pm$. All $\mathcal X_i$ have the same vertex sets,
but for $i<j$ $\mathcal X_j$ in general has more edges than $\mathcal
X_i$, so we have natural maps $\mathcal X_1\to \mathcal
X_2\to\cdots$. One may wonder whether eventually this sequence
consists of quasi-isometries (all maps are clearly coarsely onto). The
answer to this question is negative. To see this, choose some
pseudo-Anosov homeomorphism $g$ whose stable and unstable laminations
are very close, but not equal to those of $f_1$. This is possible by
the work of Farb and Mosher on Schottky subgroups of $MCG$
\cite{FM}. Furthermore, one can arrange that nontrivial powers of $g$
are not conjugate to powers of $f_i$'s (that's automatic once the
(un)stable laminations of $g$ are sufficiently close to those of $f_1$).
It then follows that for small $i$ $g$ has positive translation length
in $\mathcal X_i$ and for large $i$ its orbits are bounded.
\end{remark}

\subsection{WPD}

For the construction of quasi-homomorphisms on groups acting
isometrically on hyperbolic complexes it is important to have Weak
Proper Discontinuity of the action \cite{BF}.

\begin{prop}\label{wpd.surface}
  The elements $f_1,\cdots,f_k$ chosen at the start of the
  construction satisfy WPD: For every $i=1,2,\cdots,k$, every $x\in \X$,
  and every $C>0$ there is $N>0$ such that
$$\{g\in MCG| d(x,g(x))\leq C, d(f_i^N(x),gf_i^N(x))\leq C\}$$
is finite.
\end{prop}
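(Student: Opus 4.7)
The plan is to adapt the standard WPD argument for an isometric action on a $\delta$-hyperbolic space in which a distinguished element $f$ acts with positive translation length. Fix $f = f_i$, write $\tau > 0$ for its $\X$-translation length (positive by Proposition \ref{orbits}), and fix $x$ and $C$.

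First I would choose $N$ large enough that $d(x, f^N x) \geq N\tau - O(\delta)$ greatly exceeds $C$, and also so that the North-South dynamics of $f$ on $\mathcal{PML}$ has pushed the $f^N$-images of points arising from $x$ into a very small neighborhood of $\Lambda_f^+$. Given $g \in \MCG$ satisfying the two WPD bounds, thin-quadrilateral reasoning in the $\delta$-hyperbolic graph $\X$ forces $g$ to send the quasi-geodesic $[x, f^N x]$ into a bounded neighborhood of itself. Since this quasi-geodesic fellow-travels the $f$-quasi-axis, $g$ acts as an approximate translation along this axis by an amount $k\tau + O(1)$ with $|k| \leq C/\tau + O(1)$, or else reverses orientation. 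After passing to a subsequence with $k$ constant and writing $h := g f^{-k}$ (composing with a flip if $g$ reverses orientation), we may assume $h$ moves both $x$ and $f^N x$ by bounded amounts and is a near-identity translation along the $f$-axis.

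Second, I would pass to the boundary. The two endpoints of the $f$-quasi-axis in $\partial \X$ correspond to the projective laminations $\Lambda_f^\pm$, and the bound $d(f^N x, h f^N x) \leq C'$ together with the North-South dynamics forces $h \Lambda_f^+$ into an arbitrarily small neighborhood of $\Lambda_f^+$ as $N$ grows, and similarly for $\Lambda_f^-$. Hence by choosing $N$ large at the outset, we may arrange $h \Lambda_f^{\pm} \in D_f^{\pm}$ for every $h$ surviving this reduction.

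The crux is the final finiteness step. Suppose there were infinitely many distinct $h_n$ satisfying all the constraints above; then $h_n \Lambda_f^\pm \to \Lambda_f^\pm$ in $\mathcal{PML}$, i.e., the stable and unstable laminations of the pseudo-Anosov conjugates $h_n f h_n^{-1}$ converge to those of $f$. By the rigidity argument of Remark \ref{ken} (via Brock-Masur-Minsky on WP axes, or the Teichm\"uller analogue), the WP axes $A_{h_n f h_n^{-1}}$ then converge to $A_f$ uniformly on compact sets, and since the set of closed geodesics of bounded WP length in moduli space is discrete, eventually $A_{h_n f h_n^{-1}} = A_f$. Hence each such $h_n$ lies in the virtual normalizer $N(\langle f \rangle)$, which is virtually cyclic in $\MCG$ with $\langle f \rangle$ of finite index. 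Since $f$ acts on $\X$ with positive translation length $\tau$, only finitely many elements of $N(\langle f \rangle)$ can move $x$ by at most $C$, contradicting infinitude. The main obstacle is precisely this finiteness step: the jump from approximate fixing of the lamination pair to membership in a virtually cyclic subgroup, which requires the axis-rigidity machinery of Remark \ref{ken} deployed across the full $\MCG$-orbit of pseudo-Anosov conjugates of $f$, together with care to handle the orientation-reversing case by passing to $g^2$ or composing with a finite-order element of $N(\langle f \rangle)$.
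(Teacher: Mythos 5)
Your overall strategy is not the paper's. The paper proves WPD (for $Out(F_n)$, and by the stated dictionary for $MCG$) by working directly with the annulus-counting definition of $\rho$: after passing to the sub-complex built from a single orbit, it chooses nested neighborhoods $U_1^\pm\subset U_0^\pm$ of the fixed points with $(U_1^\pm|M-U_0^\pm)>C'$, so that the two displacement bounds force two coordinates of $xg$ into $U_0^-$ and two coordinates of $xf^Ng$ into $U_0^+$, and then derives a contradiction from infinitely many such $g$ via degenerating length/intersection pairings (the analogue of Lemma \ref{l:scaling}, Corollary \ref{easy}, Theorem \ref{T2surface}), not via axis rigidity.

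The genuine gap in your argument is the second step. From $d(x,gx)\leq C$ and $d(f^Nx,gf^Nx)\leq C$ you may indeed reduce to an element $h=gf^{-k}$ moving $x$ and $f^Nx$ by a bounded amount, but the jump from this to ``$h\Lambda_f^\pm$ lies in an arbitrarily small $\mathcal{PML}$-neighborhood of $\Lambda_f^\pm$'' is unjustified. Bounded distance in the Bowditch complex is only a bound on the number of nested annuli between pairs of coordinates; it gives essentially no control of position in $\mathcal{PML}$ (indeed, entire curve stabilizers have uniformly bounded orbits in $\X$), the paper establishes no identification of $\partial\X$ with a subset of $\mathcal{PML}$, and approximate preservation of the single finite segment $[x,f^Nx]$ (the WPD hypothesis involves one $N$, not all $N$) does not imply anything about the action of $h$ on boundary points or on the lamination pair. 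The most one can extract, and only after the careful choice of neighborhoods as in the paper's proof of Proposition \ref{p:WPD}, is that two coordinates of $xg$ lie in a fixed neighborhood of $\Lambda_f^-$ and two coordinates of $xf^Ng$ in a fixed neighborhood of $\Lambda_f^+$; turning that into finiteness is exactly where the paper's intersection-number degeneration argument does the work your boundary/axis-rigidity step is supposed to do. (Secondarily, your appeal to Remark \ref{ken} is misapplied: the conjugates $h_nfh_n^{-1}$ project to the \emph{same} closed geodesic as $f$ in moduli space, so the ``distinct closed geodesics'' discreteness argument does not apply verbatim; one would instead need local finiteness of the set of translates of the axis. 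But even with that repaired, the earlier step remains unbridged.)
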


We will omit the proof, since it is easier than the corresponding
statement for $Out(F_n)$, which we prove in Section \ref{WPD}.

\section{A hyperbolic complex for $Out(F_n)$}

Recall that $f\in Out(F_n)$ is {\it fully irreducible} if for all
proper free factors $F$ of $F_n$ and all $k>0$ we have that $f^k(F)$
is not conjugate to $F$. For convenience, we restate our main result.

\begin{main}
  For any finite collection $f_1,\cdots,f_k$ of fully irreducible
  elements of $Out(F_n)$ there is a connected $\delta$-hyperbolic
  graph $\X$ equipped with an (isometric) action of $Out(F_n)$ such
  that
\begin{itemize}
\item the stabilizer in $Out(F_n)$ of a simplicial tree in
  $\overline{\mathcal{PT}}$ has bounded
  orbits, 
\item the stabilizer in $Out(F_n)$ of a proper free factor $F\subset F_n$
  has bounded orbits, and
\item
  $f_1,\cdots,f_k$ have nonzero translation lengths.
\end{itemize}
\end{main}

We will start with some preliminaries. By $\mathcal T=\mathcal T_n$
denote the space of free cocompact simplicial metric $F_n$-trees
without vertices of valence 1 and 2. If $\gamma$ is a conjugacy class in $F_n$
and $T\in \mathcal T$, denote by $\langle T,\gamma\rangle$ the
translation length of $\gamma$ in $T$. The group $Out(F_n)$ acts on the
right on $\mathcal T$ by the ``change of marking'', i.e. by the rule
that $\langle Tg,\gamma\rangle=\langle T,g(\gamma)\rangle$.  The group
$\R^+$ acts on $\mathcal T$ by scaling and this action commutes with
the action of $Out(F_n)$. The projectivized space $\mathcal
{PT}=\mathcal {PT}_n=\mathcal T/\R^+$ is Culler-Vogtmann's Outer space
\cite{CV}. By $\overline{\mathcal T}$ denote the closure of $\mathcal
T$ in the space of minimal $F_n$-trees. Both $Out(F_n)$ and $\R^+$
continue to act on $\overline{\mathcal T}$; let $\overline{\mathcal
  {PT}}$ be the projectivization of $\overline{\mathcal T}$. This is
Culler-Morgan's equivariant compactification of Outer space \cite{CM}.

To every fully irreducible outer automorphism $f$ one associates the
stable tree $T^+_f$ and the unstable tree $T^-_f$. In
$\overline{\mathcal {PT}}$ they are defined as limits
$T_f^+=\lim_{k\to\infty}T_0f^k$ and $T_f^-=\lim_{k\to\infty}T_0f^{-k}$ for
any tree $T_0$ in Outer space. In $\overline{\mathcal {T}}$ they are
defined only up to scale, but in a similar way after choosing the
right scaling factors. More precisely
$$T_f^+=\lim_{k\to\infty} T_0f^k/\lambda^k\qquad\mbox{and}\qquad
T_f^-=\lim_{k\to \infty} T_0f^{-k}/\mu^k$$ where $\lambda$ is the
growth rate of $f$ and $\mu$ the growth rate of $f^{-1}$ (see
below). These trees satisfy $T_f^+f=\lambda T_f^+$ and
$T_f^-f=T_f^-/\mu$. The following important fact was proved by Levitt
and Lustig \cite{LL}.

\begin{prop}\label{north-south}
The fully irreducible automorphism $f$ acts on $\overline{\mathcal
  {PT}}$ with north-south dynamics: $T_f^\pm$ are the only fixed points,
and any compact set that does not contain $T_f^-$ [$T_f^+$] converges
uniformly under iteration by $f$ [$f^{-1}$] to $T_f^+$ [$T_f^-$].
\end{prop}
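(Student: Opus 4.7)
The plan is to use a train-track representative of $f$ to exhibit $T_f^\pm$ as explicit projective eigentrees, show that the iterates $T_0 f^k/\lambda^k$ stay projectively bounded in $\overline{\mathcal T}$, and then identify any accumulation point with one of the two eigentrees by a fixed-point argument.

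First I would set things up. Since $f$ is fully irreducible, Bestvina--Handel produces a train-track representative $\phi:G\to G$ whose transition matrix is irreducible, hence Perron--Frobenius, with eigenvalue $\lambda>1$ and positive eigenvector $v$; applied to $f^{-1}$ we get $\mu$. Weighting each edge $e$ of $\tilde G$ by $v_e$ and passing to the metric completion gives a tree $T_f^+\in\overline{\mathcal T}$ satisfying $T_f^+ f=\lambda T_f^+$, and the analogous construction produces $T_f^-$ with $T_f^- f=T_f^-/\mu$. This immediately makes $T_f^\pm$ projective fixed points of the $f$-action. I would then prove boundedness of the iterates: for any $T_0\in\overline{\mathcal T}$ and any conjugacy class $\gamma$ represented by an immersed loop in $G$,
$$\langle T_0 f^k/\lambda^k,\gamma\rangle=\lambda^{-k}\langle T_0,\phi^k\gamma\rangle\le L_{\max}(T_0)\cdot\lambda^{-k}|\phi^k\gamma|_G,$$
and the train-track bound $|\phi^k\gamma|_G\le C_\gamma\lambda^k$ keeps $\{T_0 f^k/\lambda^k\}$ projectively bounded. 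By compactness of $\overline{\mathcal{PT}}$ every subsequence has a convergent sub-subsequence whose limit is a projective fixed point of $f$, so it suffices to show that $T_f^\pm$ are the only such fixed points: if $Tf=cT$, iterating and comparing $|c|^k\langle T,\gamma\rangle$ with the growth rates $\lambda^k$ and $\mu^k$ of $|\phi^{\pm k}\gamma|_G$ forces $c\in\{\lambda,1/\mu\}$, and then a uniqueness argument identifies $T$ with $T_f^\pm$ up to scale.

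The main obstacle is the uniqueness statement within each eigendirection, which I would handle using the expanding lamination $\Lambda_f^+$ carried by the train-track map. Leaves of $\Lambda_f^+$ are nested unions of legal edge-paths on which $\phi$ acts by $\lambda$-expansion; any tree $T$ with $Tf=\lambda T$ is then determined up to overall scale by its length function on long subpaths of a leaf, because $\phi^k e$ distributes asymptotically along $v$ so that $\langle T,\phi^k e\rangle/\lambda^k$ has a unique limit fixed by the scale of $T$. Dually, $T_f^-$ is characterized projectively as the unique tree in which leaves of $\Lambda_f^+$ are collapsed; this provides the witness needed to exclude subsequential convergence of $T_0 f^k/\lambda^k$ to $T_f^-$ when $[T_0]\ne[T_f^-]$, since such $T_0$ assigns positive length to long subpaths of a leaf while $T_f^-$ does not. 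Finally, uniform convergence on compact sets $K\subset\overline{\mathcal{PT}}\setminus\{T_f^-\}$ follows once one produces a forward-invariant open neighborhood of $T_f^+$, i.e.\ an attracting basin, which exists because $T_f^+$ is an attracting fixed point with multiplier $\lambda>1$; the symmetric statement for $f^{-1}$ handles the other side.
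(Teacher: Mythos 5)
The first thing to note is that the paper does not prove this proposition at all: it is quoted as a theorem of Levitt and Lustig \cite{LL}, whose proof occupies a substantial paper of its own. So your proposal is in effect an attempt to reprove that theorem from scratch, and as written it is a sketch with genuine gaps rather than a proof.

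Concretely: (i) your reduction to classifying fixed points rests on the claim that every subsequential limit of $T_0f^{k}/\lambda^{k}$ is a projective fixed point of $f$; this is unjustified, since applying $f$ to the terms of a subsequence $k_i$ produces the times $k_i+1$, which need not lie in the subsequence, so only a limit of the full sequence is automatically fixed. (ii) The classification of projective fixed points is itself a deep step. A tree $T\in\overline{\mathcal T}$ with $Tf=cT$ is a priori an arbitrary very small action --- possibly non-simplicial, with dense orbits and nontrivial arc or point stabilizers --- and the assertion that such a $T$ is ``determined up to scale by its length function on long subpaths of a leaf'' is not an argument: length functions are determined by their values on conjugacy classes, and transferring information from leaf segments of $\Lambda_f^+$ to conjugacy classes requires exactly the machinery (dual laminations, or the Kapovich--Lustig tree--current pairing that this paper invokes for other purposes) that you never set up. The same objection applies to using ``positive length on leaf subpaths'' as a witness excluding convergence to $T_f^-$: that step implicitly needs continuity of a tree--lamination (or tree--current) intersection number on $\overline{\mathcal T}$, which is a nontrivial theorem, not a formality. (iii) Finally, the uniform convergence on compact sets avoiding $T_f^-$ is the heart of Levitt--Lustig, and deducing an attracting basin ``because the multiplier is $\lambda>1$'' is circular: $\lambda$ measures the scaling of length functions under $f$, not a contraction estimate in any metric or neighborhood basis on $\overline{\mathcal{PT}}$, so the existence of a forward-invariant attracting neighborhood of $T_f^+$ is precisely what has to be proven. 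Given all this, the correct move in the context of this paper is simply to cite \cite{LL}, or, if you really want a proof, to follow their argument rather than the outline above.
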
 

For convenience, we will say that a tree $T$ is an {\it irreducible
  tree} if $T=T^+_f$ for some fully irreducible
automorphism $f$.

\subsection{Some train track facts}

Recall that a fully irreducible automorphism is {\it geometric} if it
is induced by a homeomorphism of a compact surface with (necessarily
connected) boundary;
otherwise it is {\it non-geometric}. A
fully irreducible automorphism is geometric if and only if it has a
nontrivial periodic conjugacy class (which is necessarily either fixed
or sent to its inverse) \cite{BH}. A fully irreducible
automorphism is non-geometric if and only if the associated stable
tree is free (i.e. every nontrivial element has nonzero translation
length).

In this section we generalize some of the lemmas from \cite{gafa}. In
that paper we proved, for example, that the action of $F_n$ on the
product $T_f^+\times T_f^-$ of the stable and the unstable tree
of a fully irreducible automorphism is discrete. The case of a
geometric $f$ is classical, and we focused our attention on
nongeometric $f$. Here we are interested in the action of $F_n$ on
the product $T_1\times T_2$ of two irreducible trees, associated 
with two possibly unrelated automorphisms. The proofs in this more general
setting are only slight variations of the original. 

Recall that a map $\rho:H\to H$ on a finite graph without valence 1
vertices is a {\it train track map} if it sends vertices to vertices
and for every $i>0$ the map $\rho^i$ restricted to any edge is locally
injective. Such a map is a {\it topological representative} of some
$f\in Out(F_n)$ if after a suitable identification (called {\it marking})
$\pi_1(H)\cong F_n$ the map $\rho$ induces $f$ in $\pi_1$. Every fully
irreducible automorphism $f$ admits a train track representative $\rho$
\cite{BH}. Up to scale, there is a unique assignment of lengths to the
edges of $H$ and a constant $\lambda$ (the {\it growth rate} of $f$)
so that for every edge $e$ we have $length(\rho(e))=\lambda~ length(e)$.

Replace $\rho$ by a power if necessary so that there is a fixed point
$x$ in the interior of some edge. Let $I$ be an
$\epsilon$-neighborhood around $x$ so that $\rho(I)\supset I$ (and the
orientation is preserved). Choose an isometry
$\ell:(-\epsilon,\epsilon)\to I$ and extend it uniquely to a locally
isometric immersion $\ell:\R\to H$ such that
$\ell(\lambda^Nt)=\rho^N(\ell(t))$.  A {\it stable leaf segment} is
the restriction of $\ell$ to a finite segment (possibly
reparameterized).  The collection of stable leaf segments does not
depend on the choice of $x$ and $I$. One can talk about stable leaf
segments with respect to a different graph $H'$ representing $F_n$: if
$\tau:H\to H'$ is a given homotopy equivalence, let $[\tau\ell]$ be
the induced line in $H'$ pulled tight, and then consider finite
subsegments of this line. The collection of these segments does not
depend on the choice of the train track representative $\rho:H\to H$.

Likewise, {\it unstable leaf segments} are stable leaf
segments for $f^{-1}$. 

An edge-path $p$ in $H$ is {\it legal} if $\rho^ip$ is locally injective
for all $i=0,1,\cdots$. For example, edges and stable leaf segments
(those that are also edge-paths)
are legal. If an immersed edge-path has the form $p=u\cdot v\cdot w$ where $v$
is a legal segment, then
$\rho(p)=\rho(u)\rho(v)\rho(w)$. Now if $v$ is sufficiently long, say $|v|>C$,
then after canceling against $\rho(u)$ and $\rho(w)$ what is left of $\rho(v)$
will still be longer than $v$. Such a constant $C$ is called a {\it
  critical constant}.

Let $\rho:H\to H$ be a train track map representing a fully irreducible
automorphism $f$. An immersed line $\ell$ in $H$ will be called {\it
  bad} if the legal segments of tightened iterates $[\rho^i(\ell)]$ have
uniformly bounded size for $i=0,1,2,3,\cdots$. The uniform bound can
be taken to be a critical constant.

Examples of bad lines are: unstable leaves,
$\cdots\gamma\gamma\gamma\cdots$ where $\gamma$ is periodic, as well
as concatenations of unstable half-lines with fixed endpoints
(possibly with
powers of $\gamma$ inserted between them).

\begin{lemma}\label{bad line}
A bad line $\ell$ either contains arbitrarily long unstable leaf segments or
for every $K$ there is $N$ such that $\rho^N(\ell)$ contains a segment of
length $\geq K$ representing a periodic conjugacy class.
\end{lemma}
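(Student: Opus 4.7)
The plan is to analyze the cancellation combinatorics at illegal turns of $\ell$ under iteration by $\rho$. Since $\ell$ is bad, every tightened iterate $[\rho^N(\ell)]$ decomposes into legal segments of length at most the critical constant $C$, separated by illegal turns. Because $\rho$ takes legal paths to legal paths, no new illegal turns are created under iteration: the illegal turns of $[\rho^N(\ell)]$ are exactly the illegal turns of $\ell$ that have survived $N$ steps of cancellation. For each such turn $t=(e_1,e_2)$ I would consider $p_N(t)$, the maximal common initial sub-path of $\rho^N(e_1)$ and $\rho^N(e_2)$. The sequence $|p_N(t)|$ is nondecreasing, and its union $p_\infty(t)$ is either a finite legal path or an infinite legal ray; in the latter case it is $\rho$-invariant and therefore a one-sided stable leaf ray of $f$. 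This sets up a dichotomy.

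Case A: $|p_N(t)|$ is bounded for every illegal turn of $\ell$ and all iterates. Then each $p_\infty(t)$ is a finite legal path, and since there are only finitely many turn orbits in $H$, after replacing $\rho$ by a suitable power we may assume each $p_\infty(t)$ is $\rho$-fixed. The canceled region at each surviving illegal turn in $[\rho^N(\ell)]$ then stabilizes to a fixed indivisible Nielsen path. A short legal piece between two consecutive surviving turns in $[\rho^N(\ell)]$ would grow by factor $\lambda$ under one further application of $\rho$, so if nonempty it eventually exceeds $C$, contradicting badness. Hence the structure around each surviving turn is, up to bounded legal corridors, a concatenation of fixed Nielsen paths representing periodic conjugacy classes, and the number of such paths absorbed between two fixed points of $\ell$ grows under iteration, producing arbitrarily long segments representing periodic conjugacy classes in $[\rho^N(\ell)]$.

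Case B: $|p_N(t_0)|\to\infty$ for some illegal turn $t_0$ of $\ell$. The ray $p_\infty(t_0)$ is an infinite stable leaf of $f$ emanating from $t_0$. I would argue that unbounded forward cancellation forces an unstable leaf of $f$ through $t_0$ to appear as a long sub-segment of $\ell$: heuristically, the edges $e_1,e_2$ on opposite sides of $t_0$, together with the cancellation histories of the neighboring illegal turns encountered as one walks along $\ell$, assemble into longer and longer prefixes of an unstable leaf through $t_0$. These prefixes must lie inside $\ell$ itself, because the legal pieces adjacent to $t_0$ stay bounded while the forward cancellation keeps absorbing more and more of the $\rho^N$-images of $e_1,e_2$; the only source of this ``hidden length'' is a long unstable leaf segment of $\ell$ through $t_0$.

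The main obstacle is Case B: extracting an actual long unstable leaf segment in $\ell$ from unbounded forward cancellation at an illegal turn. Making this rigorous is essentially a stable/unstable duality argument, requiring one to translate forward-cancellation data in $H$ into unstable-leaf structure via the train track representative of $f^{-1}$. This parallels the corresponding lemma in \cite{gafa}, where the unstable leaves came from the same automorphism $f$; since the duality is local to a single turn, the same argument should extend to the present setting of two a priori unrelated irreducible automorphisms with only bookkeeping changes.
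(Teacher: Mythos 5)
There is a genuine gap, and in fact the proposed dichotomy does not track the dichotomy in the statement. Your Case A is vacuous: for any illegal turn $t=(e_1,e_2)$ there is $k$ with $D\rho^k(e_1)=D\rho^k(e_2)$, so $\rho^k(e_1)$ and $\rho^k(e_2)$ share their first edge $E$, and then the common prefix of $\rho^{k+m}(e_1)$ and $\rho^{k+m}(e_2)$ contains $\rho^m(E)$, whose length grows like $\lambda^m$. Hence $|p_N(t)|\to\infty$ for \emph{every} illegal turn, and a bad line must have illegal turns (a legal line is a single infinite legal segment). So all of the content sits in your Case B, which you yourself only treat heuristically. Worse, the heuristic conclusion of Case B --- that unbounded forward cancellation at a turn always yields long unstable leaf segments of $\ell$ --- is false as stated: the periodic alternative in the lemma is genuinely needed. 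For geometric $f$ the line $\cdots\gamma\gamma\gamma\cdots$ (a bi-infinite concatenation of the indivisible Nielsen path, $\gamma$ the periodic class) is bad and has unbounded cancellation at its illegal turns, yet it contains no long unstable leaf segments; its iterates instead contain long segments representing the periodic class. Any argument extracting unstable structure purely from cancellation at a single turn must first isolate and exclude this Nielsen-path phenomenon, and your sketch never does. A secondary issue: in a bad line the legal pieces have length at most the critical constant, so the cancellation occurring in $[\rho^N(\ell)]$ at a turn is not governed by the turn-intrinsic quantity $p_N(t)$ alone; adjacent illegal turns interact and legal pieces can be consumed entirely, which your bookkeeping ignores.

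For comparison, the paper's proof is a reduction rather than a turn-by-turn analysis: it takes a train track representative $\rho':H'\to H'$ of $f^{-1}$ and the change of marking $\tau:H\to H'$, and applies Lemma 2.10 of \cite{gafa} to $\ell'=\rho^{N_0}(\ell)$. Since $\ell$ is bad, the alternative ``$[\rho^{N_0}(\ell')]$ contains a long legal (stable leaf) segment'' is excluded, so the transferred line contains a long legal segment in $H'$, i.e.\ $\ell$ contains a long unstable leaf segment. The geometric case is handled by a dichotomy on the lengths of Nielsen segments in the iterates $[\rho^i(\ell)]$: if they are uniformly bounded the cited proof still applies, and if not the periodic alternative of the lemma holds trivially. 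This is exactly the ``stable/unstable duality'' step you defer to the end, so the proposal as written leaves the essential point unproved. (Also note that this lemma concerns a single automorphism $f$; the two-automorphism generalization is the subsequent Lemma \ref{2.10}, not this one.)
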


\begin{proof}
  Fix $C>0$. Let $\rho':H'\to H'$ be a train track representative for
  $f^{-1}$ and let $\tau:H\to H'$ be the difference of markings. If
  $f$ is nongeometric we may apply \cite[Lemma 2.10]{gafa}. It says
  that for any $C'>0$ there is $N_0$ such that for any immersed line
  $\ell'$ in $H$, either $\rho^{N_0}(\ell')$ contains a stable leaf
  segment of length $>C'$ or ${\rho'}^{N_0}\tau\ell'$ contains a
  stable (for $f^{-1}$) leaf segment of length $>C'$. Apply this lemma
  to the line $\ell'=\rho^{N_0}(\ell)$ to conclude that when $\tau\ell$ is
  transferred to $H'$ it contains a legal segment of length $>C'$.
  When $C'$ is big enough, this means that $\ell$ contains a long
  unstable leaf segment.

The proof of Lemma 2.10 in \cite{gafa} holds also for geometric $f$
provided that there is a uniform bound on the length of a Nielsen
segment in all iterates $[\rho^i(\ell)]$, $i=0,1,2,\cdots$. If there is
no such bound, the statement trivially holds.
\end{proof}

\begin{cor}\label{bad line+}
Let $\ell$ be a bad line and suppose that there is a uniform bound on
the length of periodic segments in the iterates of $\ell$. Then for
every $A$ there is $B$ such that every segment in $\ell$ of length $B$
contains an unstable leaf segment of length $A$.
\end{cor}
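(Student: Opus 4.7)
The plan is to argue by contradiction using a compactness/limit argument and then invoke Lemma \ref{bad line} on the limiting line. Suppose the conclusion fails, so there exist $A>0$ and a sequence of sub-segments $\sigma_n\subset\ell$ with lengths $B_n\to\infty$, none of which contains an unstable leaf segment of length $\geq A$.

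First I would extract a limiting bi-infinite immersed line $\ell^*\subset H$ by a standard diagonal argument. Parametrize $\ell$ as a locally isometric immersion $\R\to H$ and let $t_n$ be the midpoint parameter of $\sigma_n$, so that $\sigma_n$ is $\ell$ restricted to $[t_n-B_n/2,\,t_n+B_n/2]$. Define the shifted parametrization $\ell_n(s)=\ell(s+t_n)$. Since $H$ is a finite graph, after passing to a subsequence the points $\ell_n(0)$ lie in a fixed edge at a fixed position, and by further diagonal extraction the edge-paths $\ell_n|_{[-N,N]}$ stabilize for each fixed $N$. The common limit $\ell^*$ is a bi-infinite immersed line in $H$. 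Any finite sub-segment of $\ell^*$ equals, for all large $n$, the corresponding portion of $\sigma_n$; hence $\ell^*$ contains no unstable leaf segment of length $\geq A$, for any such segment in $\ell^*$ would, for large $n$, appear in $\sigma_n$.

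Next I would verify that the two inputs of Lemma \ref{bad line} transfer from $\ell$ to $\ell^*$. Both ``legal segment'' and ``segment representing a periodic conjugacy class'' are local properties of an edge-path in $H$. Therefore any legal segment of length $L$ in $[\rho^i\ell^*]$ comes (up to bounded end-effects from tightening) from a legal segment of length $L$ in $[\rho^i\sigma_n]\subset[\rho^i\ell]$ for all large $n$, and similarly for periodic segments. Since $\ell$ is bad and has uniformly bounded periodic segments in its iterates, the same uniform bounds (with a bounded additive error) apply to $\ell^*$. Thus $\ell^*$ is a bad line whose iterates have uniformly bounded periodic segments, so Lemma \ref{bad line} forces $\ell^*$ to contain arbitrarily long unstable leaf segments, contradicting the previous paragraph.

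The main obstacle is making the limit step rigorous: one needs to argue carefully that ``bad,'' the periodic-bound hypothesis, and the absence of long unstable leaf segments are all preserved when passing from the sequence $\sigma_n$ to the limit $\ell^*$. All three rest on the same point, namely that the relevant combinatorial properties of an edge-path depend only on bounded neighborhoods and thus survive Hausdorff limits of sub-paths in the finite graph $H$; the tightening required to define iterates introduces only a bounded boundary discrepancy, which can be absorbed into the uniform constants. Once this is set up, the contradiction with Lemma \ref{bad line} is immediate.
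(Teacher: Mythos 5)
Your proposal is correct and follows essentially the same route as the paper: the paper's proof is exactly this compactness argument, taking a limit of longer and longer segments without unstable leaf segments of length $A$ and observing that the limiting line violates Lemma \ref{bad line}. Your additional care with the diagonal extraction and the bounded-cancellation/boundary effects simply fills in details the paper leaves implicit.
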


\begin{proof}
If this is false, then there is a sequence of longer and longer
segments in $\ell$ that don't contain unstable leaf segments of length
$A$. Passing to a subsequence and taking a limit produces a line that
violates Lemma \ref{bad line}.
\end{proof}

\begin{lemma}\label{2.10}
  Let $\rho:H\to H$ and $\rho':H'\to H'$ be train track
  representatives of two fully irreducible automorphisms $f$ and $f'$
  and let $\tau:H\to H'$ be a homotopy equivalence representing the
  difference of markings. Assume $\tau\rho^k\not\simeq\rho'^l\tau$ for
  all $k,l>0$ (equivalently, the (un)stable trees for $f,f'$ are
  distinct).

Then for every $C>0$ there are $N_0,L_0>0$ such that if $\iota$ is an
immersion of a line, a circle of length $\geq L_0$, or a closed
interval of length $\geq L_0$ and $\iota'$ is obtained from
$\tau\iota$ by pulling tight, then either
\begin{enumerate}[(A)]
\item $[\rho^{N_0}\iota]$ contains a legal segment of length $>C$, or
\item $[\rho'^{N_0}\iota']$ contains a legal segment of length $>C$,
  or
\item $[\rho^N\iota]$ contains a $\rho$-periodic segment of
  length $>C$ for some $N>0$, or
\item $[\rho'^N\iota']$ contains a $\rho'$-periodic segment of length
  $>C$ for some $N>0$.
\end{enumerate}
\end{lemma}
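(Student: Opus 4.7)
The strategy mirrors the proof of Lemma~2.10 in \cite{gafa}, but because $f$ and $f'$ are now unrelated the alternatives (C) and (D) are genuinely needed, and they will be absorbed by invoking Corollary~\ref{bad line+} instead of just Lemma~\ref{bad line}. Suppose the conclusion fails for some $C > 0$. Then for each $k \geq 1$ we can choose an immersed $\iota_k$ (line, circle, or interval) of length $\geq k$, with tightened image $\iota_k' = [\tau \iota_k]$ in $H'$, such that $[\rho^k \iota_k]$ and $[\rho'^k \iota_k']$ each contain no legal segment of length $> C$, and such that for every $N > 0$ neither $[\rho^N \iota_k]$ nor $[\rho'^N \iota_k']$ contains a periodic segment of length $> C$.

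Translate so that an interior point of $\iota_k$ sits at a fixed basepoint of $H$, and pass to a subsequence so that $\iota_k$ converges on every compact set to a bi-infinite immersed line $\ell$ in $H$. Continuity of $\tau$ and tightening on long subsegments then gives $\iota_k' \to \ell' := [\tau \ell]$ in $H'$. A critical-constant argument shows $\ell$ is bad for $\rho$: if some $[\rho^i \ell]$ contained a legal segment of length $L$, then for $k$ large enough $[\rho^i \iota_k]$ would contain one of length $\geq L - 1$, and iterating $\rho$ another $k - i$ times---legal segments grow by a factor of $\lambda$ per step and survive the bounded cancellation---would produce a legal segment of length $> C$ in $[\rho^k \iota_k]$, contradicting our choice of $\iota_k$. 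The symmetric argument in $H'$ shows $\ell'$ is bad for $\rho'$.

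The no-periodic-segment hypotheses also pass to the limit, so no iterate $[\rho^N \ell]$ contains a $\rho$-periodic segment of length $> C$, and similarly for $\ell'$. Corollary~\ref{bad line+} then forces every sufficiently long subsegment of $\ell$ to contain an unstable leaf segment of $f$ of any prescribed length, and likewise forces $\ell'$ to contain arbitrarily long unstable leaf segments of $f'$. To finish, one shows that a bi-infinite line saturated by arbitrarily long unstable leaf segments of $f$ is, up to the equivalence relevant for the limit, an unstable leaf of $f$ itself and therefore records the unstable tree $T_f^-$; the analogous conclusion for $\ell'$ pins it to $T_{f'}^-$. Since $\ell' = [\tau \ell]$, the change of markings $\tau$ must intertwine the unstable laminations of $f$ and $f'$; combined with the north-south dynamics of Proposition~\ref{north-south}, this forces $T_f^- = T_{f'}^-$ after transport by $\tau$, contradicting the standing hypothesis $\tau \rho^k \not\simeq \rho'^l \tau$.

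The main obstacle is this last identification step, where "densely long unstable leaf segments" must be promoted to a tree-level statement that genuinely distinguishes $T_f^-$ from $T_{f'}^-$ (and where the geometric case must be handled with care, since there the unstable leaves can interact with the periodic Nielsen segments). The extraction of subsequential limits, the propagation of badness via bounded cancellation and critical constants, and the passage of the periodic-segment bound to the limit are all direct adaptations of the corresponding steps in \cite{gafa} and require only notational modifications.
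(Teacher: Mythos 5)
Your setup (arguing by contradiction, extracting a limiting line, propagating the failure of (A)--(D) to the limit via critical constants and bounded cancellation, and then invoking Corollary~\ref{bad line+} for both $\rho$ and $\rho'$) matches the paper's argument. But the step you yourself flag as ``the main obstacle'' is exactly where the proof has to be closed, and your sketch of it does not work. A bi-infinite line all of whose long subsegments contain long unstable leaf segments of $f$ is \emph{not} in general ``an unstable leaf of $f$ itself'': the paper's own list of bad lines includes concatenations of unstable half-leaves with periodic pieces inserted, and these satisfy the same density property without being leaves. Nor does north--south dynamics on $\overline{\mathcal{PT}}$ (Proposition~\ref{north-south}) convert the statement ``$\ell$ is saturated by long unstable segments of $f$ and $\ell'=[\tau\ell]$ is saturated by long unstable segments of $f'$'' into the tree-level identity $T_f^-=T_{f'}^-$; that passage from leaf-segment data to a rigidity statement about the automorphisms is precisely what needs a theorem, and you have not supplied one.

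The paper closes the gap differently and more directly: since the single limiting line satisfies Corollary~\ref{bad line+} simultaneously for $\rho$ and for $\rho'$ (transported by $\tau$), every sufficiently long unstable leaf segment for $\rho$ contains a long unstable leaf segment for $\rho'$ and vice versa, i.e.\ $f$ and $f'$ have the same unstable lamination; then the cited rigidity result \cite[Theorem~2.14]{gafa} says that two fully irreducible automorphisms with a common (un)stable lamination have common positive powers, which contradicts the hypothesis $\tau\rho^k\not\simeq\rho'^l\tau$ for all $k,l>0$. So the missing ingredient in your proposal is this lamination-to-power rigidity input (or an equivalent substitute); without it the argument does not reach a contradiction. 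One further small point: the paper first reduces circles and long intervals to lines by a limiting argument and proves the statement for lines, which is essentially what you do by basing your $\iota_k$ at a point and taking local limits, so that part is fine.
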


\begin{proof}
It suffices to prove the statement for lines; the other cases follow
by taking limits.

Suppose the statement is false. Then we have a sequence of immersed
lines that fail (A)-(D) with $N_0\to\infty$. Denote by $\iota$ a
limiting line of this sequence. Then $\iota$ is a bad line with
respect to both $\rho$ and $\rho'$ and 
satisfies Corollary \ref{bad line+} for both. 
We conclude that
long unstable leaf segments for $\rho$ contain long unstable leaf
segments for $\rho'$ and {\it vice versa}.
Then \cite[Theorem
  2.14]{gafa} implies that $f$ and $f'$ have common positive powers,
contradiction. (In the language of \cite{gafa} we have shown that $f$
and $f'$ have the same unstable laminations.)
\end{proof}

Recall that the {\it legality} of an immersed loop $\alpha$ in $H$ is
the ratio
\begin{multline*}
LEG_H(\alpha)=\\
\frac{\mbox{sum of the lengths of maximal legal leaf segments of
  }\alpha\mbox{ of length }\geq C}{\mbox{length}(\alpha)}
\end{multline*}
where $C>0$ is a sufficiently big constant (for example, bigger than
a critical constant).

The next lemma is a variation of \cite[Lemma 5.5]{gafa} and we omit
the proof.

\begin{lemma}\label{5.5}
\begin{enumerate}[(1)]
\item For every $\epsilon>0$ and $A>0$ there is $N_3=N_3(\epsilon,A)$
  such that if $LEG_H(\alpha)\geq\epsilon$ then
$$length[\rho^N(\alpha)]\geq A~ length(\alpha)$$ for all $N\geq N_3$.
\item For every $\epsilon>0$ there is $\delta>0$ such that if
  $LEG_H(\alpha)\geq\epsilon$ then $\langle T^+_f,\alpha\rangle\geq\delta~
  length_H(\alpha)$. 
\item For every $\epsilon>0$ and every $L>0$ there is
  $N_4=N_4(\epsilon,L)>0$ such that if $LEG_H(\alpha)\geq\epsilon$
  then for all $N\geq N_4$ the set of points of $[\rho^N(\alpha)]$
  whose $L$-neighborhood is a stable leaf segment has
  total length $\geq (1-\epsilon)length[\rho^N(\alpha)]$. 
\end{enumerate}
\end{lemma}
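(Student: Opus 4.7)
My plan is to follow the strategy of \cite[Lemma 5.5]{gafa}, making only routine adaptations. All three parts hinge on a single mechanism: if $\beta\subset\alpha$ is a maximal legal subsegment of length $\geq C$, where $C$ is a critical constant for $\rho$, then $\rho(\beta)$ is a legal path of length $\lambda\cdot length(\beta)$, and after $[\rho(\alpha)]$ is pulled tight at most $C$ is cancelled from each end of $\rho(\beta)$. Hence $[\rho(\alpha)]$ contains a legal descendant of $\beta$ of length at least $\lambda\cdot length(\beta)-2C$.

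For part (1) I would track the long legal content $\ell_N$ of $[\rho^N(\alpha)]$, namely the sum of lengths of maximal legal subsegments of length $\geq C$. Iterating the observation above, each long legal subsegment $\beta\subset\alpha$ contributes a descendant of length at least $\lambda^N\cdot length(\beta)-2CN$ to $\ell_N$. Summing over the $k_0\leq length(\alpha)/C$ such subsegments of $\alpha$ gives $\ell_N\geq\lambda^N\epsilon\cdot length(\alpha)-2N\cdot length(\alpha)$, and since $length[\rho^N(\alpha)]\geq\ell_N$, any $N_3$ with $\lambda^{N_3}\epsilon-2N_3\geq A$ works. Part (2) then follows from the standard identity $\langle T^+_f,\alpha\rangle=\lim_{N\to\infty} length_H([\rho^N(\alpha)])/\lambda^N$: dividing the estimate by $\lambda^N$ and letting $N\to\infty$ yields $\langle T^+_f,\alpha\rangle\geq\epsilon\cdot length_H(\alpha)$, so $\delta:=\epsilon$ suffices.

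For part (3), fix $L$ and choose $N$ so large that, by (1), every long legal descendant in $[\rho^N(\alpha)]$ has length at least $4L/\epsilon$. Any interior point of such a descendant at distance $\geq L$ from its endpoints then has an $L$-neighborhood in $[\rho^N(\alpha)]$ that is a legal path of length $2L$, hence (for $L$ above an a priori constant depending on $\rho$) a stable leaf segment, by the construction of the leaf parametrization $\ell$. The bad set of remaining points is the union of the illegal content of $[\rho^N(\alpha)]$, which stays bounded in $N$ by a constant multiple of $length(\alpha)$, together with the $L$-collars at the ends of the $\leq length(\alpha)/C$ long legal descendants, each contributing at most $2L$; it therefore has total length $O(length(\alpha))$, which is $\leq\epsilon\cdot length[\rho^N(\alpha)]$ once $N\geq N_4(\epsilon,L)$, again by part (1).

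The hard part will be justifying the boundedness of the illegal content of $[\rho^N(\alpha)]$ used in (3): one must show that each illegal turn of $\alpha$ evolves under iteration into a combinatorially bounded configuration (either an illegal turn of a fixed type, or a segment absorbed by adjacent long legal descendants). This is a standard consequence of the stable train-track structure for fully irreducible $f$, but it is the step where the most care is needed to transcribe the argument of \cite{gafa}.
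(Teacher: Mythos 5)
Your overall strategy (track surviving legal descendants via bounded cancellation, get (2) from $\langle T_f^+,\alpha\rangle=\lim \mathrm{length}[\rho^N(\alpha)]/\lambda^N$, get (3) by a measure count) is the standard one — the paper itself omits the proof and defers to \cite[Lemma 5.5]{gafa}, which argues along these lines. But your part (3) rests on a false step: \emph{a long legal path need not be a stable leaf segment}. Legality only forbids illegal turns, whereas leaf segments cross only those turns that actually occur in some $\rho^k(e)$, and there can be legal turns that never occur there. For $\rho(a)=ab$, $\rho(b)=a$ the turn $\{\bar b,b\}$ is legal yet $bb$ never appears in any $\rho^k(e)$, so $b^m$ is a legal path of arbitrary length that is not a leaf segment; worse for your argument, for a positive train track map with $\rho(a)$ beginning with $a$ and $\rho(b)$ beginning with $b$, the turn $\{a,b\}$ is legal, $D\rho$-fixed and never taken, so if $\alpha$ contains the legal segment $\beta=\bar a\, b$ then the junction point of $[\rho^N(\beta)]=\overline{\rho^N(a)}\,\rho^N(b)$ never has a leaf-segment $L$-neighborhood, no matter how large $N$ is. Hence deep points of your ``long legal descendants'' need not be good points. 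What is true is that $\rho^k(e)$ is a leaf segment for every single edge $e$ (every edge is crossed by the leaf), so writing a long legal segment as $\rho^N(\beta)=\rho^N(e_1)\cdots\rho^N(e_m)$ you must excise the $L$-collars of \emph{all} junctions between consecutive edge images, not just the two ends of the descendant. The count still closes: split $N=m+k$, choose $k$ with $2L/(\lambda^k\min_e|e|)<\epsilon/2$, and take $m$ large; each collar sits inside an edge image of length at least $\lambda^k\min_e|e|$. (Incidentally, what you flag as the hard part is the routine part: $D\rho$ sends legal turns to legal turns, so the number of illegal turns of $[\rho^N(\alpha)]$ is at most that of $\alpha$, and each maximal legal segment of length $<C$ contributes at most $C$.)

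There is also an arithmetic slip in (1) that propagates to (2): cancellation compounds under iteration. From $|\beta_{j+1}|\geq\lambda|\beta_j|-2C'$ one gets $|\beta_N|\geq\lambda^N|\beta|-2C'(\lambda^N-1)/(\lambda-1)=\lambda^N\bigl(|\beta|-C_{\mathrm{crit}}\bigr)+C_{\mathrm{crit}}$, not $\lambda^N|\beta|-2C'N$; the loss incurred at step $j$ is magnified by $\lambda^{N-j}$ afterwards, so your displayed bound $\lambda^N\epsilon\,\mathrm{length}(\alpha)-2N\,\mathrm{length}(\alpha)$ is not justified (and is false when $|\beta|$ is only slightly above the critical constant). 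The conclusion of (1) survives precisely because $C$ is taken larger than the critical constant: then $|\beta|-C_{\mathrm{crit}}\geq(1-C_{\mathrm{crit}}/C)|\beta|$, so $\mathrm{length}[\rho^N(\alpha)]\geq\lambda^N(1-C_{\mathrm{crit}}/C)\,\epsilon\,\mathrm{length}(\alpha)$, and (2) follows with $\delta=(1-C_{\mathrm{crit}}/C)\epsilon$ rather than $\delta=\epsilon$. With these two repairs your outline matches the intended argument.
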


The following generalizes \cite[Lemma 5.6]{gafa}. We say that a
conjugacy class $\alpha$ is {\it primitive} if any of its elements can
be extended to a basis of $F_n$.

\begin{lemma}\label{5.6}
  Let $f,f'$ be fully irreducible automorphisms with $T_f^+\neq
  T_{f'}^+$ (projectively), and therefore $T_f^-\neq T_{f'}^-$. Let
  $\rho:H\to H$, $\rho':H'\to H'$ and $N_0$ be as in Lemma \ref{2.10}
  and let $\alpha$ be a conjugacy class.  Assume either that $f,f'$
  are nongeometric or that $\alpha$ is a primitive conjugacy
  class. Then there is $\epsilon>0$ such that for every $N\geq N_0$
  either $LEG_H(\rho^N(\alpha)_H)\geq\epsilon$ or
  $LEG_{H'}(\rho'^N(\alpha)_{H'})\geq\epsilon$.
\end{lemma}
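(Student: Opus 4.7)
The plan is to apply Lemma~\ref{2.10} to many short subsegments of $\alpha$, eliminate the periodic cases (C) and (D) using the hypotheses, and conclude the legality bound by a pigeonhole argument, then propagate to all $N\ge N_0$ using the standard expansion property of train tracks.

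First I would fix $C$ larger than all relevant critical constants, larger than a boundary-cancellation bound, and (in the geometric case) larger than a universal bound on the length of periodic Nielsen segments. Lemma~\ref{2.10} then supplies constants $N_0,L_0$. Replacing $\alpha$ by a large enough iterate if needed so that $|\alpha|\geq L_0$, I subdivide $\alpha$ into consecutive subsegments $\sigma_1,\dots,\sigma_m$ of length approximately $L_0$ and apply Lemma~\ref{2.10} to each $\sigma_j$. Each subsegment then falls into one of the cases (A), (B), (C), (D).

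Next I would rule out (C) and (D). In the nongeometric case neither $f$ nor $f'$ has any nontrivial periodic conjugacy class, so no $\rho$- or $\rho'$-periodic segment of length $>C$ can appear in any iterate of $\sigma_j$, and (C), (D) are vacuous. In the primitive case, I would use that primitivity is $Out(F_n)$-invariant, so $[\rho^N\alpha]$ is primitive for every $N$, and combine this with the fact that in the geometric case the only periodic conjugacy classes are (powers of) the boundary curve of the associated surface to derive a contradiction from (C) or (D). With (C) and (D) excluded, at least $m/2$ subsegments fall under the same case, say (A), contributing that many legal segments of length $>C$ to $[\rho^{N_0}\alpha]$ (the lower bound on $C$ guarantees these survive the boundary cancellations between consecutive $[\rho^{N_0}\sigma_j]$'s). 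Since $|[\rho^{N_0}\alpha]|\leq \lambda^{N_0}|\alpha|\leq \lambda^{N_0}L_0 m$, we obtain
$$LEG_H([\rho^{N_0}\alpha])\;\geq\;\frac{(m/2)C}{\lambda^{N_0}L_0 m}\;=\;\frac{C}{2\lambda^{N_0}L_0}\;=:\;\epsilon.$$
For $N\geq N_0$, under further applications of $\rho$ each long legal segment expands by $\lambda$ while cancellation can only shorten the total length, so legality is essentially nondecreasing in $N$; alternatively, Lemma~\ref{5.5}(3) gives an even stronger conclusion once the legality is positive.

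The main obstacle is the primitive case for ruling out (C) and (D): one must argue that a primitive conjugacy class under iteration by a geometric fully irreducible cannot develop arbitrarily long periodic Nielsen subwords. This is the one place where the primitive hypothesis is really used, and it requires a careful analysis of how primitivity constrains the way a class can wrap around the peripheral subgroup of the supporting surface.
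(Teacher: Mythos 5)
There is a genuine gap, and you have in fact located it yourself: the exclusion of cases (C) and (D) of Lemma~\ref{2.10} when one of $f,f'$ is geometric and $\alpha$ is primitive. Your proposed route --- ``the only periodic conjugacy classes are powers of the boundary curve, so primitivity of $[\rho^N\alpha]$ gives a contradiction'' --- does not work as stated, because cases (C) and (D) only assert that some iterate \emph{contains a long segment} representing the periodic class $\gamma$; this does not make $[\rho^N\alpha]$ equal to (or even freely homotopic into) a power of $\gamma$, so $Out(F_n)$-invariance of primitivity by itself yields no contradiction. What is needed is precisely the ``careful analysis'' you defer, and the paper supplies it via an argument of Algom-Kfir: the indivisible fixed class $\gamma$ of a geometric fully irreducible automorphism crosses every edge of $H$ exactly twice (also after collapsing a maximal forest), so its Whitehead graph in the resulting rose is a circle through every vertex; hence any loop containing two \emph{consecutive} copies of $\gamma$ has Whitehead graph containing this circle and therefore has no cut vertex. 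By Whitehead's classical theorem \cite{wh}, a primitive class has Whitehead graph that is disconnected or has a cut vertex, so $[\rho^N(\alpha)]$, being primitive, cannot contain two consecutive copies of $\gamma$; this bounds the length of periodic segments in all iterates and kills (C) and (D). Without this (or an equivalent) argument your proof is incomplete at exactly the point where the primitivity hypothesis does its work.

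For the remainder, your subdivide-and-pigeonhole scheme with propagation to all $N\geq N_0$ is in the spirit of the counting argument that the paper simply imports from \cite[Lemma 5.6]{gafa} (the paper applies Lemma~\ref{2.10} to the loop $\alpha$ itself, viewed as a circle of length $\geq L_0$, rather than to subsegments, and the claim that legality persists under further iteration rests on the expansion of legal segments past the critical constant, as in Lemma~\ref{5.5}). Those details are reconstructible and I would not count them against you; the missing Whitehead-graph step is the substantive defect.
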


\begin{proof}
We first argue that (C) and (D) of Lemma \ref{2.10} cannot occur when
applied to $\alpha$. This
is clear if $f$ and $f'$ are non-geometric, so assume that $\alpha$ is
primitive. We now use an argument of Yael Algom-Kfir \cite{yael}. The
loop $\rho^N(\alpha)$ also represents a primitive element, while the loop
representing the indivisible fixed class $\gamma$ crosses every edge
of $H$ twice. This is true after collapsing a maximal forest in $H$ as
well, and the Whitehead graph of $\gamma$ in the resulting rose is a
circle that passes through every vertex. It follows that any loop that
contains two consecutive copies of $\gamma$ will have Whitehead graph
that contains this circle, and hence it does not have a cut point. But
it is a classical theorem of Whitehead \cite{wh} that the Whitehead
graph of a primitive class is either disconnected
or contains a cut vertex. Thus $[f^N(\alpha)]$ cannot contain two
consecutive copies of $\gamma$. This finishes the proof that (C) and
(D) cannot occur. 

The rest of the argument is identical to the proof of Lemma 5.6 in
\cite{gafa}, using Lemma \ref{2.10} in place of Lemma 2.10 of
\cite{gafa}. 
\end{proof}

By $|\alpha|$ denote the length of the conjugacy class $\alpha$ with
respect to a
fixed graph.

\begin{cor}\label{T2}
  Let $f,g$ be fully irreducible automorphisms and assume $T_f^+\neq
  T_g^+$ (equivalently, $T_f^-\neq T_g^-$).  There is $\delta>0$
  such that for all primitive conjugacy classes $\alpha$ we have
  either $\langle T_f^+,\alpha\rangle \geq\delta |\alpha|$ or $\langle
  T_g^+,\alpha\rangle\geq\delta |\alpha|$. In particular, for any
  $C>0$ there are only finitely many primitive conjugacy classes
  $\alpha$ with both $\langle T_f^+,\alpha\rangle<C$ and $\langle
  T_g^+,\alpha\rangle <C$.

If $f$ and $g$ are nongeometric then these statements hold for all
conjugacy classes.
\end{cor}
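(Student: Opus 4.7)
The plan is to apply Lemma \ref{5.6} directly to $\alpha$ at the fixed exponent $N=N_0$ and then convert the resulting legality bound into a translation length bound via Lemma \ref{5.5}(2), using the eigenvalue relation $T_f^+f=\lambda T_f^+$ to transfer information from $f^{N_0}(\alpha)$ back to $\alpha$. Since distinct stable trees have distinct unstable trees for fully irreducible automorphisms, the hypotheses of Lemma \ref{5.6} are met.

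In detail, I would fix train track representatives $\rho:H\to H$ of $f$ and $\rho':H'\to H'$ of $g$, with stretch factors $\lambda$ and $\lambda'$, and let $N_0$ be the constant from Lemma \ref{2.10}. Lemma \ref{5.6} yields $\epsilon>0$ such that at $N=N_0$ either $LEG_H([\rho^{N_0}(\alpha)])\geq\epsilon$ or $LEG_{H'}([\rho'^{N_0}(\alpha)])\geq\epsilon$. In the first case, Lemma \ref{5.5}(2) gives a constant $\delta_1>0$, depending only on $\epsilon$ and $f$, with
$$\langle T_f^+,f^{N_0}(\alpha)\rangle\geq\delta_1\,length_H([\rho^{N_0}(\alpha)]).$$
Using $T_f^+f=\lambda T_f^+$ and $\langle Tg,\gamma\rangle=\langle T,g(\gamma)\rangle$ iteratively, the left side equals $\lambda^{N_0}\langle T_f^+,\alpha\rangle$. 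For the right side, writing $\alpha=f^{-N_0}(f^{N_0}(\alpha))$ and using that $f^{-N_0}$ is represented by a Lipschitz self-homotopy-equivalence of $H$ with some constant $L$ independent of $\alpha$, we get $|\alpha|_H\leq L\cdot length_H([\rho^{N_0}(\alpha)])$; a further bi-Lipschitz comparison between $|\cdot|_H$ and the reference length $|\cdot|$ absorbs into a constant $L'$. Combining yields
$$\langle T_f^+,\alpha\rangle\geq\frac{\delta_1}{L L'\lambda^{N_0}}\,|\alpha|,$$
and the symmetric bound in the second case. Taking $\delta$ to be the minimum of the two resulting constants (over $f$ and $g$) proves the main statement.

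For the ``in particular'' clause, fix $C>0$: any primitive $\alpha$ with both $\langle T_f^+,\alpha\rangle<C$ and $\langle T_g^+,\alpha\rangle<C$ must satisfy $|\alpha|<C/\delta$, and only finitely many conjugacy classes have length below a given bound in the fixed graph. The nongeometric case follows verbatim, since Lemma \ref{5.6} was stated without the primitivity assumption in that regime.

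I do not expect a serious obstacle: the corollary is essentially a mechanical consequence of Lemmas \ref{5.6} and \ref{5.5}(2) combined with the scaling identity $T_f^+f=\lambda T_f^+$. The only point requiring care is tracking which length is measured in which graph and ensuring the Lipschitz comparisons used to pass from $length_H([\rho^{N_0}(\alpha)])$ back to $|\alpha|$ are uniform in $\alpha$---which they are, since both the train track data and the automorphisms $f,g$ are fixed before $\alpha$ enters the picture.
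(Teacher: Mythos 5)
Your proof is correct and takes essentially the same route as the paper, whose entire proof is the one-line combination of Lemma \ref{5.6} with Lemma \ref{5.5}(2). Your additional bookkeeping---transferring the bound from $f^{N_0}(\alpha)$ back to $\alpha$ via $T_f^+f^{N_0}=\lambda^{N_0}T_f^+$ and a Lipschitz representative of $f^{-N_0}$, uniformly in $\alpha$---simply makes explicit the step the paper leaves implicit.
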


\begin{proof}
This follows from Lemma \ref{5.5}(2) and Lemma \ref{5.6}.
\end{proof}

\begin{remark}\label{comparable lengths}
  Note that there is $M>0$ such that $\langle
  T_1,\alpha\rangle+\langle T_2,\alpha\rangle\leq M |\alpha|$ for any
  conjugacy class $\alpha$. This is because there is an equivariant
  Lipschitz map $T\to T_i$, $i=1,2$ from any tree $T$ in Outer
  space. Therefore, $$\langle T_f, \alpha\rangle +\langle
  T_g,\alpha\rangle \sim |\alpha|$$ for primitive $\alpha$ (or all
  $\alpha$ if $f,g$ are nongeometric) in the sense that the ratio is
  bounded away from 0 and $\infty$.
\end{remark}

If a sequence $T_i$ in $\overline{\mathcal T}$ converges projectively
then we say that a sequence $\lambda_i>0$ is a {\it scaling sequence
  for $T_i$} if $T_i/\lambda_i$ converges in $\overline{\mathcal T_i}$
(without further scaling). If one scaling sequence for $T_i$ converges
to infinity then they all do. For a sequence of distinct $g_i$, one
expects a scaling sequence for $Tg_i$ to converge to infinity, but
note that $\mu^i T^-_{f}f^i=T_f^-$, so the scaling sequence is
$1/\mu^i$. The following Proposition is a weak converse to this.

\begin{prop}\label{T1}
Assume $p_0\neq q_0$ are irreducible trees, $p_i=p_0\cdot g_i$,
$q_i=q_0\cdot g_i$ for a sequence of distinct $g_i\in Out(F_n)$. Also
assume that $p_i$ and $q_i$ converge projectively. Then a scaling
sequence for either $p_i$ or $q_i$ converges to infinity.
\end{prop}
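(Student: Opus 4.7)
The plan is to argue by contradiction. Suppose neither scaling sequence tends to infinity. After passing to a subsequence I may choose bounded scaling sequences $\lambda_i,\mu_i>0$ with $p_i/\lambda_i\to p_\infty$ and $q_i/\mu_i\to q_\infty$ in $\overline{\mathcal T}$, so that for every conjugacy class $\gamma$ the translation lengths $\langle p_i,\gamma\rangle=\lambda_i\langle p_i/\lambda_i,\gamma\rangle$ and $\langle q_i,\gamma\rangle$ stay bounded as $i$ varies. Using the equivariance $\langle p_0\cdot g_i,\gamma\rangle=\langle p_0,g_i(\gamma)\rangle$ (and analogously for $q_0$), this translates to: for each conjugacy class $\gamma$, both $\langle p_0,g_i(\gamma)\rangle$ and $\langle q_0,g_i(\gamma)\rangle$ are bounded in $i$.

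Since $p_0\ne q_0$ are irreducible, I would write $p_0=T_f^+$, $q_0=T_{f'}^+$ with distinct stable trees and invoke Corollary \ref{T2}: for any $C>0$ only finitely many primitive conjugacy classes $\alpha$ satisfy both $\langle p_0,\alpha\rangle\leq C$ and $\langle q_0,\alpha\rangle\leq C$. Since outer automorphisms preserve primitivity, $\{g_i(\gamma):i\in\N\}$ is a finite set of primitive classes for every primitive $\gamma$. Fixing a basis $x_1,\dots,x_n$ of $F_n$ and applying this to the finite list $\mathcal C=\{x_j\}\cup\{x_j x_k:j<k\}$ of primitive classes, I extract a subsequence on which $g_i(\gamma)=g_1(\gamma)$ for every $\gamma\in\mathcal C$; then $h_i:=g_1^{-1}g_i$ fixes every class in $\mathcal C$.

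Finally I would show $h_i=1$ in $Out(F_n)$, contradicting distinctness of the $g_i$. Representing $h_i$ by $\tilde h_i\in Aut(F_n)$ with $\tilde h_i(x_1)=x_1$, the relations $\tilde h_i(x_k)\sim x_k$ give $\tilde h_i(x_k)=w_k x_k w_k^{-1}$ for some $w_k\in F_n$; a direct cyclic-reduction analysis of $\tilde h_i(x_1 x_k)\sim x_1 x_k$ forces $w_k\in\langle x_1\rangle\langle x_k\rangle$, hence $\tilde h_i(x_k)=x_1^{a_k} x_k x_1^{-a_k}$ for some $a_k\in\Z$. The remaining relations $\tilde h_i(x_j x_k)\sim x_j x_k$ (for $j,k\geq 2$, when $n\geq 3$) then pin down $a_j=a_k$ for all $j,k$, so $\tilde h_i=\iota_{x_1^a}$ is inner and $h_i=1$ in $Out(F_n)$. (For $n=2$ no further constraint is needed, since $\tilde h_i=\iota_{x_1^{a_2}}$ is automatically inner.)

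The substance of the proposition lies in Corollary \ref{T2}, which is where the train-track estimates from earlier in the paper feed in; past that, everything reduces to a short combinatorial check in $F_n$. The main obstacle I anticipate is the passage from bounded translation lengths on primitive classes to the finiteness of $\{g_i\}$ in $Out(F_n)$: bounded lengths on basis-element conjugacy classes alone are not enough, because they determine $\{g_i(x_k)\}$ only up to individual conjugation, while what is needed is simultaneous conjugation; including the pair-product classes $x_j x_k$ in $\mathcal C$ supplies exactly the extra constraint to rigidify this.
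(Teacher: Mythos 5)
Your proposal is essentially the paper's own proof: assume the scaling sequences are bounded, translate this into uniform bounds on $\langle p_0,g_i(\gamma)\rangle$ and $\langle q_0,g_i(\gamma)\rangle$, invoke Corollary \ref{T2} for primitive classes of word length at most $2$, and conclude from the rigidity of automorphisms fixing those conjugacy classes that only finitely many $g_i$ can occur, contradicting distinctness. The only difference is that the paper cites the rigidity statement as a standard fact while you verify it directly using the classes $x_j$ and $x_jx_k$; your verification (and hence the whole argument) is correct.
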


\begin{proof}
Suppose $p_i/\lambda_i\to p$ and $q_i/\mu_i\to q$. Let $\alpha$ be a
primitive conjugacy class in $F_n$.  Therefore,
$$\langle p_0g_i/\lambda_i,\alpha\rangle\to \langle p,\alpha\rangle $$ and hence
$$\langle p_0,g_i(\alpha)\rangle\overset\sim < \lambda_i$$
and similarly $\langle q_0,g_i(\alpha)\rangle\overset\sim <\mu_i$.

Now suppose that both $\lambda_i$ and $\mu_i$ are bounded. 
By Corollary \ref{T2} there are only finitely
many possibilities for $g_i(\alpha)$. Now apply this to the primitive conjugacy
classes of elements in $F_n$ of word length $\leq 2$. Since an
automorphism that fixes these conjugacy classes is necessarily inner
(a standard fact),
it follows that there are only finitely many choices for $g_i$,
contradiction. 
\end{proof}

\subsection{Measured geodesic currents}\label{currents}

Measured geodesic currents (or just currents in the sequel) were
introduced by Francis Bonahon, first on hyperbolic surfaces
\cite{bonahon1} in order to study the geometry of Teichm\"uller space,
and later in the setting of any word-hyperbolic group
\cite{bonahon2}. Of interest for this paper is the case of free
groups, further studied by Reiner Martin in his thesis \cite{reiner},
and more recently by Ilya Kapovich, Martin Lustig and others (see
\cite{ilya-lustig} and references therein). Martin's thesis has never
been published, but most of his results are available in
\cite{ilya-survey}. 

Let $\partial F_n$ denote the Cantor set of ends of $F_n$ and let
$\partial^2F_n=(\partial F_n\times\partial F_n-\Delta)/\Z_2$ be the
space of unordered pairs of distinct points of $\partial F_n$ (thought
of as the space of unoriented biinfinite geodesics in $F_n$). By
$\mathcal C(F_n)$ denote the collection of compact open subsets of
$\partial^2F_n$. A {\it current} $\eta$ is an additive function
$\mathcal C(F_n)\to [0,\infty)$ which is invariant under the
  (diagonal) action of $F_n$ (``additive'' means that $\eta(C_1\sqcup
  C_2)=\eta(C_1)+\eta(C_2)$). The space $\mathcal {MC}(F_n)$ of
  currents has the structure of the cone (positive linear combinations
  of currents are currents) and a natural topology, as a subset of
  $[0,\infty)^{\mathcal C(F_n)}$. Projectivizing
  gives a compact space $\mathcal {PMC}(F_n)$ of projectivized
  (measured geodesic) currents.

For each indivisible conjugacy class $\gamma$ in $F_n$ we define a
current $\eta_\gamma$ induced by $\gamma$: if $C\in \mathcal C(F_n)$
then $\eta_\gamma(C)$ is the number of lifts of $\gamma$ which are in
$C$. If $\gamma=\beta^k$ with $\beta$ indivisible and $k>0$, define
$\eta_\gamma=k\eta_\beta$. Thus the set of conjugacy classes in $F_n$
can be viewed as a subset of $\mathcal {MC}(F_n)$, and their image in
$\mathcal {PMC}(F_n)$ is dense \cite{reiner}. The group $Out(F_n)$
acts on the space of currents via $$g(\eta)(C)=\eta(g^{-1}(C))$$ This
action extends the action on conjugacy classes, in the sense that
$g(\eta_\gamma)=\eta_{g(\gamma)}$.

For a fully irreducible automorphism $f$ one can define the
{\it stable current} $\Upsilon^+_f$ and the {\it unstable current}
$\Upsilon^-_f$. Projectively they can be defined as
$\Upsilon_f^+=\lim_{k\to\infty} f^k(\eta_\gamma)$ and
$\Upsilon_f^-=\lim_{k\to\infty} f^{-k}(\eta_\gamma)$ for any
primitive conjugacy class $\gamma$ in $F_n$ (or indeed any
non-periodic conjugacy class). In
$\mathcal{MC}(F_n)$ the stable and unstable currents are defined only
up to scale:
$$\Upsilon_f^+=\lim_{k\to\infty}
f^{k}(\eta_\gamma)/\lambda^k\qquad\mbox{and}\qquad 
\Upsilon_f^-=\lim_{k\to\infty}
f^{-k}(\eta_\gamma)/\mu^k$$
where $\lambda$ and $\mu$ are the growth rates of $f$ and $f^{-1}$.

The following important fact was proved by Martin \cite{reiner}.

\begin{prop}\label{reiner1}
Every non-geometric fully irreducible automorphism $f$ acts on
$\mathcal{PMC}(F_n)$ with north-south dynamics: $\Upsilon_f^\pm$ are
the only two fixed points and every compact set that does not contain
$\Upsilon_f^-$ [$\Upsilon_f^+$] uniformly converges to $\Upsilon_f^+$
[$\Upsilon_f^-$] under iteration by $f$ [$f^{-1}$].
\end{prop}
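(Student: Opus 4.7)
The plan is to mimic the Levitt--Lustig proof of north-south dynamics on trees (Proposition \ref{north-south}), upgrading the legality machinery of the preceding subsection from conjugacy classes to currents. Fix a train track representative $\rho:H\to H$ for $f$ with stretch factor $\lambda$; recall that the stable current $\Upsilon_f^+$ is the canonical projective limit of normalized rational currents under forward $f$-iteration. The goal reduces to showing that for any $\eta \in \mathcal{MC}(F_n)$ which is not a positive scalar multiple of $\Upsilon_f^-$, the normalized iterates $f^k(\eta)/\|f^k(\eta)\|_H$ converge projectively to $\Upsilon_f^+$, with uniform convergence on compact subsets of $\mathcal{PMC}(F_n)$ missing $\Upsilon_f^-$.

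The first step is to extend the legality functional to arbitrary currents. Using the standard encoding of $\eta$ by values $\eta(\sigma)$ on cylinder sets corresponding to finite edge-paths $\sigma$ in $H$, set
\[
LEG_H(\eta) = \frac{\sum_{\sigma} \eta(\sigma)\, length(\sigma)}{\|\eta\|_H},
\]
where the sum ranges over $F_n$-orbits of edge-paths which are maximal legal segments of length at least the critical constant $C$, and $\|\eta\|_H = \sum_e \eta(e)\, length(e)$ is the length functional on currents. This agrees with $LEG_H(\alpha)$ when $\eta = \eta_\alpha$. Three facts drive the argument: (i) both $LEG_H$ and $\|\cdot\|_H$ are continuous on $\mathcal{MC}(F_n)$; (ii) legal subsegments stretch exactly by $\lambda$ under $\rho$, so $\|f(\eta)\|_H \geq \lambda\, LEG_H(\eta)\, \|\eta\|_H$; and (iii) a measure-theoretic analog of Lemma \ref{5.5}(3): if $LEG_H(\eta) \geq \epsilon$ then $f^k(\eta)/\|f^k(\eta)\|_H$ becomes arbitrarily close to $\Upsilon_f^+$ as $k \to \infty$.

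The crux is uniform positivity of legality: for every compact $K \subset \mathcal{PMC}(F_n)$ with $\Upsilon_f^- \notin K$ there exist $N_0$ and $\epsilon > 0$ such that $LEG_H(f^{N_0}(\eta)) \geq \epsilon$ for every normalized $\eta \in K$. If this failed, some sequence $\eta_i \in K$ would satisfy $LEG_H(f^{k_i}(\eta_i)) \to 0$ along $k_i \to \infty$. After passing to a convergent subsequence $\eta_i \to \eta_\infty \in K$, continuity of $LEG_H$ together with a Fatou-type argument forces $\eta_\infty$-almost every biinfinite line to be bad with respect to $\rho$. Lemma \ref{bad line} then says each such line either contains arbitrarily long unstable leaf segments, or has iterates with arbitrarily long periodic segments. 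Nongeometricity of $f$ excludes the periodic alternative, so $\eta_\infty$ is supported on the unstable lamination; this forces $\eta_\infty$ to be a positive scalar multiple of $\Upsilon_f^-$, contradicting $\eta_\infty \in K$.

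Combining (i)--(iii) with uniform positivity yields north-south dynamics: after $N_0$ iterations every $\eta \in K$ has legality at least $\epsilon$, and (iii) then produces uniform convergence to $\Upsilon_f^+$ under further iteration. The main obstacle is step (iii), upgrading Lemma \ref{5.5}(3) from the loop setting (where mass is counted edge-by-edge) to general currents. One must decompose the mass of $f^k(\eta)$ into a legal part which inflates by $\lambda$ per iteration and whose local structure converges to stable leaves, plus a residual part of slower exponential growth; after normalization the ratio of residual to legal mass tends to zero, producing the desired projective convergence. Nongeometricity is used precisely to exclude periodic contributions throughout, consistent with the hypothesis in the statement.
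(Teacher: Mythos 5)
Your overall strategy---extend legality to currents, prove uniform positivity of legality on compact sets missing $\Upsilon_f^-$, then convert legality into uniform attraction to $\Upsilon_f^+$---is a legitimate route, but as written it has genuine gaps at the two decisive points. The most serious is the final step: from ``$\eta_\infty$-a.e.\ line is bad'' you conclude that $\eta_\infty$ is supported on the unstable lamination and hence is a multiple of $\Upsilon_f^-$. Neither implication is justified by anything you invoke. Bad lines need not be leaves of the unstable lamination (the paper notes explicitly that concatenations of unstable half-leaves are bad), so the support of $\eta_\infty$ is a priori strictly larger than the lamination; and even granting support in the lamination, the statement ``the only current carried by the unstable lamination is $\Upsilon_f^-$'' is exactly unique ergodicity of that lamination in the space of currents, a nontrivial theorem. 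Within this paper the only statement of that type is Corollary \ref{easy}(2), whose proof iterates $\Upsilon$ forward and uses Proposition \ref{reiner1} itself, so appealing to it here would be circular; you would need an independent argument or an outside reference. The second gap is one you flag yourself: step (iii), the upgrade of Lemma \ref{5.5}(3) from loops to currents via a legal/residual mass decomposition, is precisely the technical content a proof must supply, and it is only gestured at. Two smaller points: your formula for $LEG_H(\eta)$ as an infinite sum over maximal legal segments is problematic (continuity of an infinite sum of cylinder functions is unclear, and a current supported on entirely legal lines---e.g.\ $\Upsilon_f^+$ itself---would get legality $0$); the fix is to define legality by the clopen, finite-window condition that a given edge lies in some legal segment of length at least $C$. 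Also, passing from $LEG_H(f^{k_i}(\eta_i))\to 0$ with $k_i\to\infty$ to smallness of $LEG_H(f^{j}(\eta_\infty))$ for each fixed $j$ requires quasi-monotonicity of legality under iteration (once legality is at least $\epsilon$ it becomes close to $1$ after boundedly many further iterations, via Lemma \ref{5.5}(3)); this should be stated, not assumed.

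For comparison, the paper's own (sketched) proof avoids all of this: it works only with the rational currents $\eta_\alpha$, uses Lemma \ref{5.5}(3) to get uniform convergence to $\Upsilon_f^+$ for all classes with $LEG_H(\alpha)\ge\epsilon$, uses Lemma \ref{5.6} for the dichotomy that every (primitive, or arbitrary in the non-geometric case) class converges uniformly forward to $\Upsilon_f^+$ or backward to $\Upsilon_f^-$, and then concludes by density of conjugacy classes in $\mathcal{PMC}(F_n)$. That route needs no legality functional on currents and no unique ergodicity statement; if you want to pursue your currents-level argument, those are exactly the ingredients you must add.
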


Of course, this result is false for geometric automorphisms since the
current representing the boundary is fixed as well. However,
Martin also observed that the above theorem holds for geometric
automorphisms as well, provided one restricts to a certain closed
invariant 
subset $\mathcal M(F_n)$ in $\mathcal{PMC}(F_n)$. This set is defined
as the closure of the set of projectivized currents of the form $\eta_\gamma$
where $\gamma$ is a primitive conjugacy class. Thus
$\mathcal M(F_n)$ contains all currents of the form
$\Upsilon_f^\pm$. It is also known that for $n\geq 3$ $\mathcal
M(F_n)$ is the unique minimal nonempty closed $Out(F_n)$-invariant subset of
$\mathcal{PMC}(F_n)$ \cite{ilya-lustig2}.

\begin{prop}\label{reiner2}
Every fully irreducible automorphism
$f$ acts on $\mathcal M(F_n)$ with north-south dynamics:
$\Upsilon_f^\pm$ are the only fixed points and every compact subset of
$\mathcal M(F_n)$ that does not contain $\Upsilon_f^-$ [$\Upsilon_f^+$]
converges uniformly to $\Upsilon_f^+$ [$\Upsilon_f^-$] under iteration
by $f$ [$f^{-1}$].
\end{prop}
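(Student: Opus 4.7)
The plan is to deduce the non-geometric case directly from Proposition \ref{reiner1} and to handle the geometric case separately by a Whitehead-graph obstruction. For non-geometric $f$, $\mathcal{M}(F_n)$ is a closed $f$-invariant subset of $\mathcal{PMC}(F_n)$, and both $\Upsilon_f^\pm$ lie in $\mathcal{M}(F_n)$ because they are projective limits of $f^{\pm k}(\eta_\alpha)$ for any primitive conjugacy class $\alpha$; so the north-south dynamics on all of $\mathcal{PMC}(F_n)$ restricts to the required dynamics on $\mathcal{M}(F_n)$.

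For the geometric case, let $f$ be induced by a pseudo-Anosov $\phi$ of a compact surface $S$ with single boundary component $\partial S$, and write $\gamma_\partial$ for the boundary conjugacy class in $F_n = \pi_1(S)$. The only obstacle to north-south dynamics on all of $\mathcal{PMC}(F_n)$ is the additional $f$-fixed projectivized current $\eta_{\gamma_\partial}$ (together with convex combinations with $\Upsilon_f^\pm$), and the whole point of passing to $\mathcal{M}(F_n)$ is precisely to remove this. So the key step is to show $\eta_{\gamma_\partial} \notin \mathcal{M}(F_n)$, and more generally that no $f$-fixed projectivized current with mass on $\gamma_\partial$ belongs to $\mathcal{M}(F_n)$. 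I would prove this by exactly the Whitehead-graph argument of Algom-Kfir already invoked in Lemma \ref{5.6}: if primitive classes $\alpha_i$ projectively converge to $\eta_{\gamma_\partial}$, then the subword frequencies of $\alpha_i$ approach those of the biinfinite word $\gamma_\partial^\infty$, so for large $i$ the cyclic reduction of $\alpha_i$ must contain two consecutive copies of $\gamma_\partial$. Collapsing a maximal tree in $S$ to get a rose, $\gamma_\partial$ (a product of commutators in the orientable case, or of squares in the nonorientable case) traverses each edge of the rose twice, hence its Whitehead graph is connected and contains no cut vertex; any $\alpha_i$ containing two consecutive $\gamma_\partial$ blocks inherits this property and so cannot be primitive by Whitehead's theorem, a contradiction.

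With $\eta_{\gamma_\partial}$ and all its projective combinations with $\Upsilon_f^\pm$ excluded from $\mathcal{M}(F_n)$, the only $f$-fixed projectivized currents in $\mathcal{M}(F_n)$ are $\Upsilon_f^\pm$. To finish, I would apply a compactness argument on the invariant compact space $\mathcal{M}(F_n)$: for any $\eta \in \mathcal{M}(F_n) \setminus \{\Upsilon_f^-\}$ the $\omega$-limit set $\omega(\eta)$ is a nonempty, compact, $f$-invariant subset of $\mathcal{M}(F_n)$, and any minimal subset of $\omega(\eta)$ must consist of $f$-fixed points and hence equal $\{\Upsilon_f^+\}$ or $\{\Upsilon_f^-\}$. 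Eliminating the latter using continuity of $f^{-1}$ near $\Upsilon_f^-$ (and $\eta \neq \Upsilon_f^-$) yields pointwise convergence $f^k(\eta) \to \Upsilon_f^+$, and uniform convergence on compact subsets of $\mathcal{M}(F_n) \setminus \{\Upsilon_f^-\}$ follows by a further standard compactness argument. The main obstacle is the Whitehead-graph step: ruling out $\eta_{\gamma_\partial}$ as a limit of primitive currents in $\mathcal{PMC}(F_n)$; once that is in hand, the dynamical conclusion is essentially forced by the now-minimal set of fixed points together with compactness.
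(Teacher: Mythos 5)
Your non-geometric case is fine (restrict Proposition \ref{reiner1} to the closed invariant set $\mathcal M(F_n)$), and your Whitehead-graph step is sound and in the spirit of the paper: by Whitehead's theorem, a class containing two consecutive copies of the boundary word cannot be primitive, so $\eta_{\gamma_\partial}$ is not a limit of primitive currents and hence not in $\mathcal M(F_n)$. This is essentially the Algom-Kfir argument the paper uses inside Lemma \ref{5.6} to rule out alternatives (C) and (D) of Lemma \ref{2.10}.

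The genuine gap is in the geometric case, in the passage from ``the only $f$-fixed points in $\mathcal M(F_n)$ are $\Upsilon_f^\pm$'' to north-south dynamics. Your claim that any minimal subset of the $\omega$-limit set $\omega(\eta)$ must consist of fixed points has no justification and is false for general homeomorphisms of compact spaces: a rotation of the $2$-sphere has exactly two fixed points, yet its minimal sets are circles and no orbit converges. Knowing the fixed-point set, even on an invariant compactum, does not force convergence of orbits, let alone the uniform convergence on compact sets demanded by the statement (pointwise convergence would not upgrade to uniform convergence by ``a standard compactness argument'' without equicontinuity-type input you do not have). You also lean on the assertion that for geometric $f$ the ``only obstacle'' to north-south dynamics on all of $\mathcal{PMC}(F_n)$ is $\eta_{\gamma_\partial}$ and its combinations with $\Upsilon_f^\pm$; Proposition \ref{reiner1} is stated only for non-geometric $f$, so this structural description is itself something that would have to be proved, and you cannot restrict a theorem you do not have. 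The paper avoids all of this by proving convergence quantitatively: Lemma \ref{5.5}(3) shows that all conjugacy classes with legality bounded below by $\epsilon$ converge \emph{uniformly} to $\Upsilon_f^+$ (and similarly for $f^{-1}$), Lemma \ref{5.6} (with $f'=f^{-1}$, using the Whitehead argument to handle the geometric case for primitive classes) shows every primitive class has legality bounded below for $f$ or for $f^{-1}$, and then density of primitive classes in $\mathcal M(F_n)$ gives the proposition. Some substitute for this uniform, train-track-based convergence mechanism is exactly what your argument is missing.
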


\begin{proof}[Proofs of Propositions \ref{reiner1} and \ref{reiner2}]
  Let $\rho:H\to H$ be a train track representative for $f$ and let
  $\ell$ be a stable leaf. A typical compact and open set
  $C\subset \partial^2F_n$ is determined by a finite edge path in the
  universal cover $\tilde H$ -- it consists of all lines that contain
  this path. So one can view a current $\eta$ as assigning a number to such
  an edge path. Equivariance dictates that translates be assigned the
  same number, thus $\eta$ assigns numbers to edge paths in
  $H$. Additivity then translates to saying that
  $\eta(\pi)=\sum\eta(\pi_i)$ as $\pi_i$ range over all 1-edge
  extensions of $\pi$. The current $\Upsilon^+_f$ assigns 0 to edge
  paths that are not crossed by $\ell$, and more generally it assigns the
  frequency of occurrence of this path in $\ell$. Lemma \ref{5.5}(3)
  implies that all conjugacy classes $\alpha$ with
  $LEG_H(\alpha)\geq\epsilon$ converge to $\Upsilon_f^+$
  uniformly. The same statement holds when $f$ is replaced by $f^{-1}$
  and 
  $H$ by a train track graph for $f^{-1}$. Lemma \ref{5.6} now
  implies that every primitive conjugacy class $\alpha$ (or any
  non-trivial class if $f$ is non-geometric) either converges
  uniformly to $\Upsilon^+_f$ under forward iteration, or to
  $\Upsilon^-_f$ under backward iteration. Since conjugacy classes are
  dense in $\mathcal{PMC}(F_n)$ and primitive conjugacy classes are
  dense in $\mathcal M(F_n)$, both propositions follow.
\end{proof}

In \cite{ilya-lustig} I. Kapovich and Lustig extended the length
pairing between trees and conjugacy classes to trees and
currents. More precisely, they proved the following.

\begin{prop}
There is a length pairing $\langle
\cdot,\cdot\rangle:\overline{\mathcal T}\times \mathcal{MC}(F_n)\to
     [0,\infty)$ satisfying:
\begin{itemize}
\item it extends the usual length pairing, i.e. $\langle
  T,\eta_\gamma\rangle =\langle T,\gamma\rangle$ for any conjugacy
  class $\gamma$,
\item $\langle Tg,\eta\rangle=\langle T,g(\eta)\rangle$,
\item it is homogeneous in the first coordinate, i.e. $$\langle
  \lambda T,\eta\rangle=\lambda \langle T,\eta\rangle$$ for
  $\lambda> 0$,
\item it is linear in the second coordinate, i.e. $$\langle
  T,\lambda_1\eta_1+\lambda_2\eta_2\rangle=\lambda_1\langle
  T,\eta_1\rangle +\lambda_2\langle
  T,\eta_2\rangle$$ for $\lambda_1,\lambda_2\geq 0$, and
\item it is continuous.
\end{itemize}
\end{prop}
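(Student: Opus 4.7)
My plan is to construct the pairing first on the open dense subset $\mathcal{T}\subset\overline{\mathcal{T}}$ of simplicial trees by an explicit combinatorial formula, verify most of the axioms there, and then extend to all of $\overline{\mathcal{T}}$ by continuity.

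For $T\in\mathcal{T}$, let $\Gamma=T/F_n$ be the quotient finite metric graph and, for each edge $e$ of $\Gamma$, fix a lift $\tilde e\subset T$. Let $Z_{\tilde e}\subset\partial^2 F_n$ be the compact open set of pairs of ends whose bi-infinite geodesic in $T$ crosses $\tilde e$. Define
$$\langle T,\eta\rangle=\sum_{e\in E(\Gamma)}\ell_T(e)\,\eta(Z_{\tilde e}),$$
a finite sum since $\Gamma$ has finitely many edges. The choice of lift is irrelevant by $F_n$-invariance of $\eta$. If $\gamma$ is an indivisible conjugacy class realized by an immersed loop in $\Gamma$, then $\eta_\gamma(Z_{\tilde e})$ counts the number of crossings of $e$ by this loop, so the sum reproduces the translation length $\langle T,\gamma\rangle$, verifying the extension property. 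Homogeneity in $T$ is visible from the edge-length factor, linearity in $\eta$ is additivity of currents, and equivariance follows from $\ell_{Tg}(e)=\ell_T(ge)$ combined with $g(\eta)(Z_{\tilde e})=\eta(Z_{g^{-1}\tilde e})$.

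Joint continuity on $\mathcal{T}\times\mathcal{MC}(F_n)$ is then local: near any $T$ the simplicial type is constant, the cylinders $Z_{\tilde e}$ stay fixed, $\eta\mapsto\eta(Z_{\tilde e})$ is continuous by the definition of the topology on $\mathcal{MC}(F_n)$, and $\ell_T(e)$ varies continuously with $T$. To extend to the boundary $\overline{\mathcal{T}}\setminus\mathcal{T}$, where trees can be wild $\R$-trees such as $T_f^\pm$, I would approximate $T\in\overline{\mathcal{T}}$ by a sequence of simplicial $T_i\in\mathcal{T}$ and set $\langle T,\eta\rangle=\lim\langle T_i,\eta\rangle$. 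For $\eta=\eta_\gamma$ this limit exists and equals $\langle T,\gamma\rangle$ by the very definition of convergence in $\overline{\mathcal{T}}$; extending by density of rational linear combinations of $\eta_\gamma$'s in $\mathcal{MC}(F_n)$ then yields a candidate pairing satisfying all five properties, and uniqueness follows from these density statements together with continuity.

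The main obstacle is exactly this limiting step for arbitrary $\eta$. The cylinder sets $Z_{\tilde e}$ depend on the simplicial structure of $T_i$, and this structure may refine without bound as $T_i\to T$, so a priori $\sum\ell_{T_i}(e)\eta(Z_{\tilde e})$ could oscillate even though convergence on conjugacy-class currents is automatic. The key technical step is therefore a uniform estimate of the form
$$|\langle T,\eta\rangle-\langle T',\eta\rangle|\le C\cdot\varepsilon(T,T')\cdot\|\eta\|,$$
where $\varepsilon(T,T')$ controls the difference of length functions on some generating set and $\|\cdot\|$ is a suitable norm on $\mathcal{MC}(F_n)$. Such a bound should follow from the existence of equivariant Lipschitz maps between cocompact trees in Outer space (cf.\ Remark \ref{comparable lengths}), which lets one compare contributions from different cylinder decompositions. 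Once this uniform Lipschitz estimate is in hand, $\langle T_i,\eta\rangle$ is Cauchy uniformly on compacta in $\mathcal{MC}(F_n)$, and the continuous extension to $\overline{\mathcal{T}}$ is forced and unique.
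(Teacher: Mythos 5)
Your construction on the interior $\mathcal T$ is fine and is indeed the standard definition (sum of edge lengths weighted by the current's measure on edge cylinders), and the five listed properties are easy there, just as you say. But note first that the paper does not prove this proposition at all: it is quoted from Kapovich--Lustig \cite{ilya-lustig}, and the entire mathematical content of that theorem is exactly the step you defer to an unproven ``uniform estimate,'' namely the continuous extension from $\mathcal T\times\mathcal{MC}(F_n)$ to $\overline{\mathcal T}\times\mathcal{MC}(F_n)$. So the proposal, as it stands, reproves the easy half and assumes the hard half.

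Concretely, the proposed estimate $|\langle T,\eta\rangle-\langle T',\eta\rangle|\le C\,\varepsilon(T,T')\,\|\eta\|$ cannot be obtained the way you suggest. Boundary trees are in general non-simplicial $\R$-trees with dense orbits (e.g.\ $T_f^\pm$), so they admit no cylinder decomposition, and ``comparing contributions from different cylinder decompositions'' has no meaning at the limit tree; moreover the equivariant Lipschitz maps of Remark \ref{comparable lengths} go from a fixed cocompact tree to an arbitrary $T\in\overline{\mathcal T}$ with constants that do not tend to $1$ as $T_i\to T$, so they only give one-sided multiplicative bounds of the form $\langle T_i,\eta\rangle\le \Lambda_i\langle T_0,\eta\rangle$, which is nowhere near a Cauchy estimate. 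Likewise, convergence of $\langle T_i,\eta_\gamma\rangle$ for every conjugacy class $\gamma$ (which is just the definition of the topology on $\overline{\mathcal T}$) plus density of rational currents does not force convergence of $\langle T_i,\eta\rangle$ for general $\eta$: without equicontinuity, pointwise convergence on a dense linear subset says nothing about the limit elsewhere, and the delicacy is genuine --- the analogous naive limiting procedure fails, for instance, for the tree--tree pairing on $\overline{\mathcal T}\times\overline{\mathcal T}$. The actual proof of Kapovich--Lustig obtains the needed uniform control from the bounded back-tracking property of very small $F_n$-trees (an equivariant map from a Cayley tree has uniformly bounded back-tracking, with constant controlled by the length function on a basis), which is the ingredient missing from your argument; without it, or some substitute of comparable strength, the extension step is a gap rather than a routine limit.
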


The following statements are easy consequences of the above.

\begin{cor} \label{easy}
Let $f$ be any fully irreducible
  automorphism, $T\in\overline{\mathcal T}$,
  $\Upsilon\in\mathcal{MC}(F_n)$. Then
\begin{enumerate}[(1)]
\item $\langle T_f^\pm,\Upsilon_f^\mp\rangle=0$.
\item Assume either that $f$ is non-geometric or that
  $\Upsilon\in\mathcal M(F_n)$. If $\langle T_f^\pm,\Upsilon\rangle=0$
  then $\Upsilon=\Upsilon_f^\mp$ (projectively).
\item If $\langle T,\Upsilon_f^\pm\rangle=0$ then $T=T_f^\mp$ (projectively).
\end{enumerate}
\end{cor}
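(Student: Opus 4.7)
The plan is to exploit the defining scaling relations of the attracting/repelling trees and currents, combined with the equivariance, homogeneity, and continuity of the Kapovich--Lustig length pairing, together with the north--south dynamics provided by Propositions \ref{north-south}, \ref{reiner1}, and \ref{reiner2}.

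For (1), recall $T_f^+\cdot f^k=\lambda^k T_f^+$; from the defining limit $\Upsilon_f^-=\lim f^{-k}(\eta_\gamma)/\mu^k$ one gets $f(\Upsilon_f^-)=\Upsilon_f^-/\mu$, hence $f^k(\Upsilon_f^-)=\mu^{-k}\Upsilon_f^-$. Applying the equivariance identity $\langle Tg,\eta\rangle=\langle T,g(\eta)\rangle$ and the homogeneity of the pairing to $(T_f^+,\Upsilon_f^-)$ yields
$$\lambda^k\langle T_f^+,\Upsilon_f^-\rangle=\langle T_f^+\cdot f^k,\Upsilon_f^-\rangle=\langle T_f^+,f^k(\Upsilon_f^-)\rangle=\mu^{-k}\langle T_f^+,\Upsilon_f^-\rangle,$$
so $(\lambda\mu)^k\langle T_f^+,\Upsilon_f^-\rangle=\langle T_f^+,\Upsilon_f^-\rangle$ for every $k\geq 0$. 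Since $\lambda,\mu>1$ (growth rates of fully irreducible automorphisms), the pairing must vanish. The same computation with $f$ replaced by $f^{-1}$ handles $(T_f^-,\Upsilon_f^+)$.

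Before tackling (2) and (3), I would establish the key nondegeneracy fact $\langle T_f^+,\Upsilon_f^+\rangle>0$ (and symmetrically $\langle T_f^-,\Upsilon_f^-\rangle>0$). Pick a primitive conjugacy class $\gamma$; in the non-geometric case $T_f^+$ is free so $\langle T_f^+,\gamma\rangle>0$ automatically, while in the geometric case a primitive class cannot equal the (non-primitive) boundary class, so $\langle T_f^+,\gamma\rangle>0$ as well. Since $f^k(\eta_\gamma)/\lambda^k\to \Upsilon_f^+$, the scaling cancels in
$$\langle T_f^+,f^k(\eta_\gamma)/\lambda^k\rangle=\lambda^{-k}\langle T_f^+\cdot f^k,\eta_\gamma\rangle=\langle T_f^+,\gamma\rangle,$$
and continuity of the pairing forces $\langle T_f^+,\Upsilon_f^+\rangle=\langle T_f^+,\gamma\rangle>0$.

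For (2), suppose $\langle T_f^+,\Upsilon\rangle=0$ but $\Upsilon\neq\Upsilon_f^-$ projectively. Proposition \ref{reiner1} (in the non-geometric case) or Proposition \ref{reiner2} (when $\Upsilon\in\mathcal{M}(F_n)$) gives positive scalars $c_k$ with $f^k(\Upsilon)/c_k\to \Upsilon_f^+$ in $\mathcal{MC}(F_n)$. But
$$\langle T_f^+,f^k(\Upsilon)/c_k\rangle=(\lambda^k/c_k)\langle T_f^+,\Upsilon\rangle=0,$$
so continuity yields $\langle T_f^+,\Upsilon_f^+\rangle=0$, contradicting the nondegeneracy established above. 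Part (3) is the exact dual: assuming $T\neq T_f^-$ projectively, Proposition \ref{north-south} produces scalars $d_k$ with $T\cdot f^k/d_k\to T_f^+$ in $\overline{\mathcal T}$, and the analogous computation $\langle T\cdot f^k/d_k,\Upsilon_f^+\rangle=(\lambda^k/d_k)\langle T,\Upsilon_f^+\rangle=0$ again forces $\langle T_f^+,\Upsilon_f^+\rangle=0$, contradiction.

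The only real subtlety is the nondegeneracy claim $\langle T_f^+,\Upsilon_f^+\rangle>0$, which is what drives both contradictions; everything else is bookkeeping with the scaling identities and passage to the limit under continuity. In the non-geometric case this is immediate, and in the geometric case it reduces to observing that the boundary conjugacy class is not primitive.
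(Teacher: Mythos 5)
Your proof is correct and follows essentially the same route as the paper: the vanishing in (1) via the scaling of the pairing under iteration of $f$, the key positivity $\langle T_f^+,\Upsilon_f^+\rangle=\langle T_f^+,\gamma\rangle>0$, and then (2),(3) by north--south dynamics plus continuity of the pairing. The only (harmless) differences are that you use the eigenvalue relations $T_f^+f=\lambda T_f^+$, $f(\Upsilon_f^-)=\Upsilon_f^-/\mu$ where the paper computes directly with the defining limits, and that you spell out why $\langle T_f^+,\gamma\rangle>0$ in the geometric case, which the paper leaves implicit.
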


\begin{proof}
(1) $\langle T_f^+,\Upsilon_f^-\rangle=\langle \lim
  T_0f^i/\lambda^i,\lim f^{-i}(\eta_\gamma)/\mu^i\rangle =\lim \langle
  T_0,\eta_\gamma\rangle/(\lambda^i\mu^i)=0$.

(2) Let $\gamma$ be a primitive conjugacy class. We start by observing
  that $\langle T_f^+,\Upsilon_f^+\rangle=\lim \langle
  T_0f^i/\lambda^i,f^i(\eta_\gamma)/\lambda^i\rangle=\lim\langle
  T_0f^{2i}/\lambda^{2i},\gamma\rangle=\langle
  T_f^+,\gamma\rangle>0$. If we had $\langle T_f^+,\Upsilon\rangle=0$
  for some $\Upsilon\neq\Upsilon_f^-$ then
  $f^i(\Upsilon)/\lambda_i\to\Upsilon_f^+$ for a suitable scaling
  sequence $\lambda_i$ (actually, one can take
  $\lambda_i=\lambda^i$), and by continuity we would conclude $\langle
  T_f^+,\Upsilon_f^+\rangle=0$, contradiction.

(3) Similar to (2).
\end{proof}

When $T$ is an irreducible tree, denote by $T^*$ the dual current, i.e.  if
$T=T^+_f$ then $T^*=\Upsilon^-_f$. (This is well defined since if
$T_f^+=T_g^+$ then $f^m=g^k$ for some $m,k>0$ and therefore
$\Upsilon_f^\pm=\Upsilon_g^\pm$.) Thus $\langle T,T^*\rangle=0$. The
current $T^*$ is defined only up to scale. Also note that
$(Tf)^*=f^{-1}(T^*)$ and that $T^*$ is the only current in $\mathcal
M(F_n)$ whose length in $T$ is 0.

\begin{lemma}\label{dual convergence}
Let $T_i,T$ be irreducible trees. Then $T_i\to T$ iff $T_i^*\to T^*$
(projectively). 
\end{lemma}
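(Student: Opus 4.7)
The plan is to combine continuity of the Kapovich--Lustig length pairing with the uniqueness statements in Corollary \ref{easy}, exploiting compactness of $\overline{\mathcal{PT}}$ and of $\mathcal M(F_n)\subset\mathcal{PMC}(F_n)$ to extract subsequential limits and identify them.

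For the forward implication, I would write $T=T_f^+$ for a fully irreducible $f$ and choose scale representatives so that $T_i\to T$ in $\overline{\mathcal T}$ (not merely projectively). Each $T_i^{*}=\Upsilon_{f_i}^-$ lies in the closed, compact set $\mathcal M(F_n)$, so after passing to a subsequence we may assume $T_i^{*}\to\Upsilon$ projectively with $[\Upsilon]\in\mathcal M(F_n)$; rescaling the chosen representatives lifts this to genuine convergence in $\mathcal{MC}(F_n)$. By Corollary \ref{easy}(1), $\langle T_i,T_i^{*}\rangle=0$ for every $i$, so continuity of the pairing forces $\langle T,\Upsilon\rangle=0$. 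Because $[\Upsilon]\in\mathcal M(F_n)$, Corollary \ref{easy}(2) identifies $\Upsilon=\Upsilon_f^-=T^{*}$ projectively. Every projective subsequential limit of $T_i^{*}$ thus equals $T^{*}$, so compactness of $\mathcal{PMC}(F_n)$ upgrades subsequential convergence to convergence of the full sequence.

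The reverse direction is structurally identical. Assuming $T_i^{*}\to T^{*}$ projectively, I would use compactness of $\overline{\mathcal{PT}}$ to pass to a subsequence with $T_i\to T'$ projectively, rescale so the convergence is honest in both $\overline{\mathcal T}$ and $\mathcal{MC}(F_n)$, and apply continuity to $\langle T_i,T_i^{*}\rangle=0$ to obtain $\langle T',T^{*}\rangle=0$. Writing $T=T_f^+$ so that $T^{*}=\Upsilon_f^-$, Corollary \ref{easy}(3) then gives $T'=T_f^+=T$ projectively. A final compactness argument shows the whole sequence converges.

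The only delicate point is the bookkeeping in passing between projective and honest convergence when invoking the continuous pairing; this is harmless because the relation $\langle\cdot,\cdot\rangle=0$ is invariant under positive rescaling in either coordinate. I do not expect any substantive obstacle here: the real work is packaged in Corollary \ref{easy}, where irreducible trees and their dual currents in $\mathcal M(F_n)$ are characterized as the zero loci of the length pairing.
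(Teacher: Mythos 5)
Your argument is correct and is essentially the paper's own proof: both pass $\langle T_i,T_i^*\rangle=0$ to a (suitably rescaled) subsequential limit via continuity of the Kapovich--Lustig pairing and then invoke Corollary \ref{easy}(2) and (3), using that $\mathcal M(F_n)$ is closed, to identify the limits. The extra compactness/subsequence bookkeeping you spell out is implicit in the paper's one-line version.
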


\begin{proof}
Say $T_i/\lambda_i\to S$ and $T_i^*/\mu_i\to \Upsilon$ (without
scaling). Then $\langle S,\Upsilon\rangle = \langle \lim
T_i/\lambda_i,\lim T_i^*/\mu_i\rangle = 0$ so
if $S=T$ then $\Upsilon=T^*$ (note that $\Upsilon\in\mathcal M(F_n)$)
and if $\Upsilon=T^*$ then $S=T$.
\end{proof} 

\begin{lemma}\label{scaling}
Let $T$ be an irreducible tree. Suppose trees $Tg_i$ converge
projectively to a tree $\not= T$. Suppose also that $g_i(T^*)$
converges projectively to a current $\not=T^*$ (or, equivalently by
Lemma \ref{dual convergence}, $Tg^{-1}_i$ converges projectively to a
tree $\neq T$). Then a scaling sequence for $Tg_i$ is also a scaling
sequence for $g_i(T^*)$.
\end{lemma}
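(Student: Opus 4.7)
The plan is to exploit the Kapovich--Lustig length pairing together with its equivariance: for any $g \in Out(F_n)$ one has
$$\langle Tg, T^*\rangle = \langle T, g(T^*)\rangle.$$
Applying this identity with $g = g_i$, I will estimate the common quantity $\langle Tg_i, T^*\rangle$ in two different ways---using a scaling sequence $\lambda_i$ for $Tg_i$ on the one hand, and a scaling sequence $\mu_i$ for $g_i(T^*)$ on the other---and conclude that $\mu_i/\lambda_i$ tends to a positive finite constant, which is what the lemma claims. Throughout, write $T = T_f^+$ for a fully irreducible $f$, so that $T^* = \Upsilon_f^-$.

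First, let $\lambda_i$ be a scaling sequence for $Tg_i$, so that $Tg_i/\lambda_i \to S$ in $\overline{\mathcal T}$ with $S \neq T$ projectively. By continuity and homogeneity of the pairing,
$$\frac{\langle Tg_i, T^*\rangle}{\lambda_i} \;=\; \left\langle \frac{Tg_i}{\lambda_i},\, T^*\right\rangle \;\longrightarrow\; \langle S, T^*\rangle.$$
Since $T^* = \Upsilon_f^-$ and $S \not= T_f^+$ projectively, Corollary \ref{easy}(3) yields $\langle S, T^*\rangle > 0$. Next, the hypothesis that $g_i(T^*)$ converges projectively to a current $\Upsilon \neq T^*$ supplies a scaling sequence $\mu_i$ with $g_i(T^*)/\mu_i \to \Upsilon$ in $\mathcal{MC}(F_n)$. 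Because $T^* \in \mathcal M(F_n)$ and $\mathcal M(F_n)$ is closed and $Out(F_n)$-invariant, $\Upsilon \in \mathcal M(F_n)$. Then by continuity and linearity in the current variable,
$$\frac{\langle T, g_i(T^*)\rangle}{\mu_i} \;=\; \left\langle T,\, \frac{g_i(T^*)}{\mu_i}\right\rangle \;\longrightarrow\; \langle T, \Upsilon\rangle,$$
and Corollary \ref{easy}(2), applied with $\Upsilon \in \mathcal M(F_n)$ and $\Upsilon \not= \Upsilon_f^-$ projectively, gives $\langle T_f^+, \Upsilon\rangle > 0$.

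Combining the two displays with the equivariance identity $\langle Tg_i, T^*\rangle = \langle T, g_i(T^*)\rangle$ yields
$$\frac{\mu_i}{\lambda_i} \;=\; \frac{\langle T, g_i(T^*)\rangle/\mu_i \cdot \mu_i}{\langle Tg_i, T^*\rangle/\lambda_i \cdot \lambda_i} \cdot \frac{\lambda_i}{\mu_i}\cdot \frac{\mu_i}{\lambda_i} \;\longrightarrow\; \frac{\langle S, T^*\rangle}{\langle T, \Upsilon\rangle} \in (0, \infty).$$
Hence
$$\frac{g_i(T^*)}{\lambda_i} \;=\; \frac{g_i(T^*)}{\mu_i}\cdot \frac{\mu_i}{\lambda_i} \;\longrightarrow\; \frac{\langle S, T^*\rangle}{\langle T, \Upsilon\rangle}\,\Upsilon,$$
a nonzero current in $\mathcal M(F_n)$, so $\lambda_i$ is a scaling sequence for $g_i(T^*)$ as required. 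The main obstacle is ensuring that both limiting pairings $\langle S, T^*\rangle$ and $\langle T, \Upsilon\rangle$ are nonzero; this is precisely where both non-convergence hypotheses are used, invoking parts (3) and (2) of Corollary \ref{easy} respectively.
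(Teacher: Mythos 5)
Your proof is correct and follows essentially the same route as the paper: the paper's argument is exactly the equivariance computation $\langle T, g_i(T^*)/\lambda_i\rangle=\langle Tg_i/\lambda_i,T^*\rangle\to\langle T',T^*\rangle>0$, and you merely make explicit the comparison with a scaling sequence $\mu_i$ for $g_i(T^*)$, where the hypothesis that $g_i(T^*)$ converges projectively to a current $\neq T^*$ enters via Corollary \ref{easy}(2) (a step the paper leaves implicit). One small slip: your displayed formula for $\mu_i/\lambda_i$ simplifies to $1$ as written; the intended identity is $\mu_i/\lambda_i=\bigl(\langle Tg_i,T^*\rangle/\lambda_i\bigr)\big/\bigl(\langle T,g_i(T^*)\rangle/\mu_i\bigr)$, which yields exactly the limit you state, so the argument stands.
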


\begin{proof}
Suppose $Tg_i/\lambda_i\to T'$.
We have $\langle T,g_i(T^*)/\lambda_i\rangle=\langle
Tg_i/\lambda_i,T^*\rangle \to \langle T',T^*\rangle>0$, so $\lambda_i$
is a scaling sequence for $g_i(T^*)$.
\end{proof}

\begin{lemma}\label{doubly bad}
Suppose $a\not=b$ are two irreducible trees and that $ag_i$ and $bg_i$
converge projectively to $a$ and $b$ respectively. Also assume that
$g_i(a^*)$ converges projectively to a current $\neq b^*$ and
$g_i(b^*)$ converges projectively to a current $\neq a^*$
(equivalently, $ag_i^{-1}$ converges projectively to a tree $\neq b$
and $bg_i^{-1}$ to a tree $\neq a$).  Then a scaling sequence for
$ag_i$ is a scaling sequence for $g_i(b^*)$ and a scaling sequence for
$bg_i$ is a scaling sequence for $g_i(a^*)$.
\end{lemma}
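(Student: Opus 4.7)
The plan is to exploit the equivariance $\langle Tg,\eta\rangle=\langle T,g(\eta)\rangle$ of the length pairing, together with its bilinearity and continuity, to evaluate a single numerical sequence in two ways and thereby compare the two scaling rates. This is the same mechanism as in Lemma \ref{scaling}, but now with nontrivial limits on \emph{both} coordinates, which forces the two extra non-degeneracy hypotheses into play.

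First I would use that $a\neq b$ projectively to get, via Corollary \ref{easy}(3), that $\langle a,b^*\rangle>0$ (and symmetrically $\langle b,a^*\rangle>0$). Let $\lambda_i$ be a scaling sequence for $ag_i$, so $ag_i/\lambda_i\to a$ in $\overline{\mathcal T}$, and let $\mu_i$ be any scaling sequence for $g_i(b^*)$, so $g_i(b^*)/\mu_i\to\zeta$ for some nonzero current $\zeta$. Since $\mathcal M(F_n)$ is closed and $Out(F_n)$-invariant and $b^*\in\mathcal M(F_n)$, the limit $\zeta$ lies in $\mathcal M(F_n)$, with projective class equal to the one in the hypothesis, so $[\zeta]\neq [a^*]$. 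Applying Corollary \ref{easy}(2), $\langle a,\zeta\rangle=0$ would force $\zeta=a^*$ projectively, contradicting $[\zeta]\neq[a^*]$, so $\langle a,\zeta\rangle>0$.

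Now set $X_i:=\langle ag_i,b^*\rangle=\langle a,g_i(b^*)\rangle$ (equality by equivariance). Using homogeneity and continuity in each coordinate separately,
\begin{equation*}
\frac{X_i}{\lambda_i}=\Big\langle \frac{ag_i}{\lambda_i},\,b^*\Big\rangle\longrightarrow\langle a,b^*\rangle,\qquad
\frac{X_i}{\mu_i}=\Big\langle a,\,\frac{g_i(b^*)}{\mu_i}\Big\rangle\longrightarrow\langle a,\zeta\rangle.
\end{equation*}
Dividing the two limits gives
\begin{equation*}
\frac{\mu_i}{\lambda_i}\longrightarrow \frac{\langle a,b^*\rangle}{\langle a,\zeta\rangle}\in(0,\infty),
\end{equation*}
so $\mu_i$ and $\lambda_i$ are comparable and hence $\lambda_i$ itself is a scaling sequence for $g_i(b^*)$ (indeed $g_i(b^*)/\lambda_i$ converges to the positive multiple $\langle a,\zeta\rangle/\langle a,b^*\rangle\cdot\zeta$ of $\zeta$). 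The symmetric argument, interchanging the roles of $a$ and $b$ and of $a^*$ and $b^*$, shows that a scaling sequence for $bg_i$ is a scaling sequence for $g_i(a^*)$.

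The only nontrivial point to watch is that both limiting pairings $\langle a,b^*\rangle$ and $\langle a,\zeta\rangle$ are strictly positive; this is precisely what the two non-degeneracy hypotheses of the lemma guarantee, via the dichotomy already packaged in Corollary \ref{easy}(2)--(3). Everything else is an application of continuity and bilinearity of the Kapovich--Lustig pairing.
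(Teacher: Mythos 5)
Your argument is correct and follows essentially the same route as the paper: evaluate $\langle ag_i,b^*\rangle=\langle a,g_i(b^*)\rangle$ via equivariance, use $a\neq b$ and Corollary \ref{easy} to get positivity of the limiting pairings, and conclude the scaling sequences are comparable. In fact you spell out the one point the paper leaves implicit, namely that the hypothesis $[\lim g_i(b^*)]\neq[a^*]$ together with Corollary \ref{easy}(2) gives $\langle a,\zeta\rangle>0$, which is exactly what is needed to pass from convergence of the paired quantities to comparability of $\lambda_i$ and $\mu_i$.
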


\begin{proof}
Suppose $ag_i/\lambda_i\to a$. We have
$$\langle a,g_i(b^*)/\lambda_i\rangle=\langle
  ag_i/\lambda_i,b^*\rangle \to \langle a,b^*\rangle >0$$
which means that $\lambda_i$ is a scaling sequence for
$g_i(b^*)$. The claim about $g_i(a^*)$ is similar.
\end{proof}

\subsection{Verification of (A1) and (A2)}

Fix a finite collection of fully irreducible automorphisms
$f_1,\cdots,f_k$. Choose small closed neighborhoods $D^\pm_i$ of
$T^\pm_{f_i}$ determining annuli $A_i=(D_i^-,D_i^+)$ and consider the
corresponding annulus system $\mathcal A=\{\pm A_ig|g\in Out(F_n),
i=1,\cdots,k\}$ consisting of all translates of these. For notational
simplicity we will assume $k=1$, $f=f_1$ and $D^\pm=D_1^\pm$.

\begin{lemma}\label{OutA1}
If $D^\pm$ are chosen small enough, the following holds.
If $a,b,c,d$ are irreducible trees and $a\neq b$, $c\neq d$, then
$(ab|cd)<\infty$. 
\end{lemma}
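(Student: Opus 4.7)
The plan is to argue by contradiction: suppose there are infinitely many $h_i \in Out(F_n)$ witnessing $\{a,b\} < A h_i < \{c,d\}$, that is, $a h_i^{-1}, b h_i^{-1} \in \mathrm{int}\, D^-$ and $c h_i^{-1}, d h_i^{-1} \in \mathrm{int}\, D^+$. After passing to a subsequence and choosing the neighborhoods $D^\pm$ sufficiently small, all four sequences $a h_i^{-1}, b h_i^{-1}$ converge projectively to $T_f^-$ while $c h_i^{-1}, d h_i^{-1}$ converge projectively to $T_f^+$. By Lemma \ref{dual convergence} together with $(T h)^{*}=h^{-1}(T^{*})$, the dual currents $h_i(a^*), h_i(b^*)$ converge projectively to $(T_f^-)^* = \Upsilon_f^+$ and $h_i(c^*), h_i(d^*)$ converge projectively to $(T_f^+)^*=\Upsilon_f^-$.

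Let $\lambda_i$ be a scaling sequence for $a h_i^{-1}$ and $\mu_i$ a scaling sequence for $c h_i^{-1}$, so that $a h_i^{-1}/\lambda_i\to T_f^-$ and $c h_i^{-1}/\mu_i\to T_f^+$ in $\overline{\mathcal T}$. Similarly let $\pi_i,\nu_i$ be scaling sequences for $h_i(a^*)$ and $h_i(c^*)$. Since $a\neq b$ and $a h_i^{-1}, b h_i^{-1}$ share a projective limit, Proposition \ref{T1} (applied with $g_i=h_i^{-1}$) forces $\lambda_i\to\infty$. Applied to the pair $c,d$, it also yields $\mu_i\to\infty$.

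The central computation is the equivariance of the length pairing. Since $\langle a h_i^{-1},\, h_i(c^*)\rangle=\langle a,c^*\rangle$ is a fixed real number, dividing by $\lambda_i\nu_i$ and passing to the limit gives
$$\frac{\langle a,c^*\rangle}{\lambda_i\nu_i}\;=\;\left\langle \frac{ah_i^{-1}}{\lambda_i},\,\frac{h_i(c^*)}{\nu_i}\right\rangle\;\longrightarrow\;\langle T_f^-,\Upsilon_f^-\rangle.$$
As $T_f^-=T_{f^{-1}}^+$ and $\Upsilon_f^-=\Upsilon_{f^{-1}}^+$, the limit is strictly positive by the computation in the proof of Corollary \ref{easy}(2), provided $\langle a,c^*\rangle\neq 0$. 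Hence $\lambda_i\nu_i$ is bounded. In parallel, from $\langle a,h_i(c^*)\rangle=\langle a h_i^{-1}, c^*\rangle$ one reads off that $\langle a h_i^{-1}, c^*\rangle\sim\lambda_i\cdot\langle T_f^-,c^*\rangle$ while $\langle a,h_i(c^*)\rangle\sim \nu_i\cdot\langle a,\Upsilon_f^-\rangle$; these two quantities are equal, so $\lambda_i\sim\nu_i$. Combined with $\lambda_i\nu_i$ bounded, this gives $\lambda_i^2$ bounded and thus $\lambda_i$ bounded, contradicting $\lambda_i\to\infty$.

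The main obstacle is the bookkeeping for the pairings. The scaling argument needs several of $\langle a,c^*\rangle$, $\langle T_f^-,c^*\rangle$, $\langle a,\Upsilon_f^-\rangle$ to be nonzero; by Corollary \ref{easy} these vanish only in the exceptional configurations $a=c$, $c=T_f^-$, or $a=T_f^+$. All such configurations can be sidestepped by shrinking $D^\pm$ to keep $T_f^\pm$ away from $\{a,b,c,d\}$ and, when necessary, swapping $a\leftrightarrow b$ or $c\leftrightarrow d$ (using the hypotheses $a\neq b$ and $c\neq d$) to select representatives for which the relevant pairing is positive. Once this is arranged, the scaling contradiction above completes the proof.
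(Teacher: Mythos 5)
There is a genuine gap, and it sits at the heart of your closing argument. First, a smaller issue: from $ah_i^{-1},bh_i^{-1}\in D^-$ and $ch_i^{-1},dh_i^{-1}\in D^+$ you may only conclude (after a subsequence) that these sequences converge projectively to \emph{some} trees in $D^\pm$, not to $T_f^\pm$ themselves; likewise the limits of $h_i(a^*),\dots$ are just currents near $\Upsilon_f^\mp$ (and Lemma \ref{dual convergence} only applies when the limiting tree is irreducible, which need not hold here). This is patchable by working with ``close to $T_f^\pm$'' and a limiting argument over shrinking $D^\pm$ (this is exactly how the paper handles it), but your later asymptotics are written as if the limits were exactly $T_f^\pm$ and $\Upsilon_f^\mp$. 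The fatal problem is the step producing $\lambda_i\sim\nu_i$: you use the identity $\langle a,h_i(c^*)\rangle=\langle ah_i^{-1},c^*\rangle$, but equivariance of the pairing ($\langle Tg,\eta\rangle=\langle T,g(\eta)\rangle$) gives $\langle a,h_i(c^*)\rangle=\langle ah_i,c^*\rangle$, not $\langle ah_i^{-1},c^*\rangle$. With the correct identity, $\nu_i$ is comparable (via Lemma \ref{scaling}) to a scaling sequence for $ch_i$ (a ``primed'' sequence), and there is no reason for it to be comparable to $\lambda_i$, which scales $ah_i^{-1}$. Without $\lambda_i\sim\nu_i$, your conclusion that $\lambda_i\nu_i$ is bounded yields only $\nu_i\to 0$, which is not a contradiction: scaling sequences for currents can legitimately tend to $0$ (compare the paper's remark before Proposition \ref{T1} that $\mu^iT_f^-f^i=T_f^-$, so scaling sequences can even decay exponentially), so the proof does not close.

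This is precisely the difficulty the paper's proof is built to overcome. There one applies Proposition \ref{T1} to \emph{both} the sequences $xg_i^{-1}$ and $xg_i$ (for $x\in\{a,b,c,d\}$) to get three divergent scaling sequences on each side, and then Lemmas \ref{scaling} and \ref{doubly bad} are used in a case analysis to match a tree sequence with a current sequence whose scaling factors \emph{both} provably go to infinity. The contradiction then runs in the opposite direction from yours: the product of scalings tends to infinity, so the limiting pairing is $0$, producing (after letting $D^\pm$ shrink) a current in $\mathcal M(F_n)$ with zero length in both $T_f^+$ and $T_f^-$, contradicting Corollary \ref{easy}. To repair your argument you would either need a correct mechanism forcing the scaling of $h_i(c^*)$ to be bounded below (which is what the case analysis with Lemmas \ref{scaling} and \ref{doubly bad} replaces), or you should switch to the paper's strategy of exhibiting a pair with divergent scaling product and invoking Corollary \ref{easy}.
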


\begin{proof}
By translating, we may assume that $a,b,c,d$ are outside $D^\pm$.  If
$(ab|cd)=\infty$, then there are infinitely many distinct $g_i\in
Out(F_n)$ so that $ag_i^{-1},bg_i^{-1}\in D^-$ and
$cg_i^{-1},dg_i^{-1}\in D^+$ (or switch $D^-$ and $D^+$). We may
assume that these sequences converge projectively. Let
$\alpha_i,\beta_i,\gamma_i,\delta_i$ be scaling sequences for
$ag_i^{-1},bg_i^{-1},cg_i^{-1},dg_i^{-1}$ respectively, so
e.g. $ag_i^{-1}/\alpha_i$ converges in $\overline{\mathcal
  T}$. Likewise, let $\alpha_i',\beta_i',\gamma_i',\delta_i'$ be
scaling sequences for $ag_i,bg_i,cg_i,dg_i$ respectively. By
Proposition \ref{T1} at least three of
$\alpha_i,\beta_i,\gamma_i,\delta_i$ go to $\infty$, and we assume
$\alpha_i,\beta_i,\gamma_i\to\infty$. Likewise three of
$\alpha_i',\beta_i',\gamma_i',\delta_i'$ go to $\infty$, say
$\alpha_i',\beta_i',\delta_i'$ (the other possibilities are similar).

Now we have the following cases.

{\bf Case 1.} $bg_i\to T_b\neq b$ (projectively). Then by Lemma
\ref{scaling} $\beta_i'$ is a scaling sequence for $g_i(b_i^*)$ (note
that $bg_i^{-1}\in D^-$ so cannot converge to $b$). If we let
$T_a=\lim ag_i^{-1}/\alpha_i$ and $\Upsilon_b=\lim g_i(b^*)/\beta_i'$
then
$$\langle ag_i^{-1}/\alpha_i,g_i(b^*)/\beta'_i\rangle=\langle
a,b^*\rangle/(\alpha_i\beta'_i)\to 0$$ so $\langle
T_a,\Upsilon_b\rangle=0$. Likewise $\langle T_c,\Upsilon_b\rangle=0$
where $T_c=\lim cg_i^{-1}/\gamma_i$. But that's a contradiction --
there is no current in $\mathcal M(F_n)$ that has length 0 in trees
close to both $T_f^+$ and $T_f^-$. (Note that $T_a$ is close to
$T_f^-$ and $T_c$ to $T_f^+$.) Indeed, a limiting argument would
produce a current in $\mathcal M(F_n)$ whose length is 0 in both
$T^+_f$ and $T^-_f$, violating Corollary \ref{easy}.

{\bf Case 2.} $ag_i\to T_a\neq a$. This is the same as Case 1 after
exchanging the roles of $a$ and $b$.

{\bf Case 3.} $ag_i\to a$ and $bg_i\to b$. Then by Lemma \ref{doubly
  bad} scaling sequences for $g_i(a^*)$ and $g_i(b^*)$ are
$\beta'_i$ and $\alpha'_i$ (note that $ag_i^{-1},bg_i^{-1}\in D^-$ so
neither can converge to $a$ or $b$) and they also go to $\infty$, so
the same argument as in Case 1 holds.
\end{proof}

\begin{lemma}\label{OutA2}
If $D^\pm$ are small enough then for any irreducible trees $a,b,c,d$
we have either $(ac|bd)=0$ or $(ad|bc)=0$.
\end{lemma}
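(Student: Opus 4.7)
The plan is to argue by contradiction in the style of Lemma \ref{OutA1} and the MCG prototype Lemma \ref{A2}, using the Kapovich--Lustig length pairing between trees and currents as the substitute for intersection number on a surface. Assume the conclusion fails for every choice of neighborhoods: fix a nested sequence $D^\pm_n\searrow\{T_f^\pm\}$ and, for each $n$, choose irreducible trees $a_n,b_n,c_n,d_n$ together with $g_{1,n},g_{2,n}\in Out(F_n)$ realizing both $(a_nc_n|b_nd_n)>0$ and $(a_nd_n|b_nc_n)>0$. Write $\tilde X_n=X_n g_{1,n}^{-1}$ and $\hat X_n=X_n g_{2,n}^{-1}$ for $X\in\{a,b,c,d\}$; then $\tilde a_n,\tilde c_n,\hat a_n,\hat d_n\in D^-_n$ and $\tilde b_n,\tilde d_n,\hat b_n,\hat c_n\in D^+_n$.

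Next I pass to a subsequence and pick positive scalings $s_X,s_X'$ so that the trees $\tilde X_n/s_X$ and $\hat X_n/s_X'$ converge in $\overline{\mathcal T}$ to fixed representatives of $T_f^-$ or $T_f^+$, as dictated by which neighborhood they belong to. By Lemma \ref{dual convergence} a further choice of positive scalings $t_X,t_X'$ yields convergence $\tilde X_n^*/t_X\to\Upsilon_f^\pm$ and $\hat X_n^*/t_X'\to\Upsilon_f^\pm$ in $\mathcal{MC}(F_n)$ with signs opposite to those of the corresponding tree limits.

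The key ingredient is the identity
$$\langle Tg,(Sg)^*\rangle=\langle Tg,g^{-1}(S^*)\rangle=\langle T,S^*\rangle,$$
which gives $\langle\tilde X_n,\tilde Y_n^*\rangle=\langle\hat X_n,\hat Y_n^*\rangle$ for all distinct $X,Y$. Dividing through by the relevant product of scalings and passing to the limit, each such pairing tends either to $\langle T_f^\pm,\Upsilon_f^\pm\rangle>0$ (matching signs) or to $\langle T_f^\pm,\Upsilon_f^\mp\rangle=0$ (opposite signs, by Corollary \ref{easy}(1)). A quick sign check on the four mixed pairs shows that $\{a,c\}$ and $\{b,d\}$ have tilde-limit $0$ and hat-limit positive, forcing
$$\frac{s_at_c}{s_a't_c'}\to\infty\quad\text{and}\quad\frac{s_bt_d}{s_b't_d'}\to\infty,$$
while $\{a,d\}$ and $\{b,c\}$ are reversed and force
$$\frac{s_at_d}{s_a't_d'}\to 0\quad\text{and}\quad\frac{s_bt_c}{s_b't_c'}\to 0.$$
Multiplying the first pair of ratios and the second pair of ratios yields the same expression $s_as_bt_ct_d/(s_a's_b't_c't_d')$, simultaneously driven to $\infty$ and to $0$---the desired contradiction.

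The main obstacle is the scaling bookkeeping in the second step: one must extract a single subsequence and find scalings $s_X,s_X',t_X,t_X'$ under which all eight trees $\tilde X_n,\hat X_n$ and all eight duals $\tilde X_n^*,\hat X_n^*$ converge to the intended limits. Compactness of $\overline{\mathcal{PT}}$ and $\mathcal{PMC}(F_n)$ together with Lemma \ref{dual convergence} (to propagate projective tree convergence to projective convergence of the duals) makes this possible, but one must take care that the tree scalings $s_X$ and the dual scalings $t_X$ are chosen independently, so that each pairing scales by the combined product $s_Xt_Y$ rather than by some preferred single factor.
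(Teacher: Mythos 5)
Your argument is correct and is in essence the paper's own proof: the paper packages the same mechanism as the single expression $\frac{\langle a,c^*\rangle\langle b,d^*\rangle}{\langle a,d^*\rangle\langle b,c^*\rangle}$, which is invariant under scaling $a,b,c^*,d^*$ and under applying $g$ (because $\langle Tg,(Sg)^*\rangle=\langle T,S^*\rangle$), and observes it is small in the first configuration and large in the second --- exactly the dichotomy your four scaling ratios reconstruct, but with no scaling bookkeeping and for a fixed small $D^\pm$ rather than a shrinking sequence. The only points to note are cosmetic: the paper, like you, treats one sign configuration explicitly (its ``Case 2'' and the interchange of $D^-$ and $D^+$ being symmetric), and your scalings should carry an index $n$.
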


\begin{proof}
Suppose $(ac|bd)>0$ and $(ad|bc)>0$. After replacing $a,b,c,d$ by
(simultaneous) translates if necessary, we may assume that $a,c\in
D^-$ and $b,d\in D^+$ (or interchange $D^-$ and $D^+$). Now there are
two cases.

{\bf Case 1.} There is $g\in Out(F_n)$ such that
$ag,dg\in D^-$, $bg,cg\in D^+$.

Consider the expression
$$\frac{\langle a,c^*\rangle\langle b,d^*\rangle}{\langle
  a,d^*\rangle\langle b,c^*\rangle}$$
This expression does not change after scaling $a,b,c^*,d^*$, and it
does not change after applying $g$, i.e. replacing $a,b$ by $ag,bg$
and $c^*,d^*$ by $g^{-1}(c^*),g^{-1}(d^*)$. 

Also note that when $c_i\to T$ (an irreducible tree) then $c_i^*\to T^*$ so we
may assume that $a$ is close to $T_f^-$, $c^*$ to $(T_f^-)^*$, $b$ to
$T_f^+$, and $d^*$ to $(T_f^+)^*$. By the continuity of the pairing, the
expression above is small (both numbers in the numerator are close to
0, the numbers in the denominator are close to $\langle
T_f^-,(T_f^+)^*\rangle>0$ and $\langle T_f^+,(T_f^-)^*\rangle>0$ ). After
applying $g$, the expression is close to $\infty$ (the numbers in the
numerator are close to $\langle T_f^-,(T_f^+)^*\rangle>0$ and $\langle
T_f^+,(T_f^-)^*\rangle>0$ and both numbers in the denominator are close to
0). Contradiction.

{\bf Case 2.} There is $g\in Out(F_n)$ such that
$ag,dg\in D^+$, $bg,cg\in D^-$. The argument is similar.
\end{proof}

\begin{proof}[Proof of the Main Theorem.]
Let $M\subset\overline{\mathcal{PT}}$ 
consisting of all irreducible trees. For the annulus system take the
restriction to $M$ of the annulus system considered above.
Thus (A1) follows from Lemma
\ref{OutA1} and (A2) from Lemma \ref{OutA2}.
The resulting Bowditch complex $\X$ is hyperbolic. The statements
about orbits and translation lengths are verified in the next section.
\end{proof}

\subsection{Orbits}

\begin{prop}
The elements $f_1,\cdots,f_k$ chosen at the start of the construction
have nonzero translation lengths.
\end{prop}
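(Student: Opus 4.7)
\bigskip

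\noindent\textbf{Proof proposal.} The plan is to mimic exactly the proof of the translation length statement in Proposition \ref{orbits} for the mapping class group case, substituting the Levitt--Lustig north-south dynamics on $\overline{\mathcal{PT}}$ (Proposition \ref{north-south}) for the corresponding dynamical statement on $\mathcal{PML}$. I will treat a single $f = f_i$ and use the annulus $A = (D^-, D^+)$ with $T_f^\pm \in \operatorname{int} D^\pm$, as in the setup preceding Lemmas \ref{OutA1} and \ref{OutA2}.

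First I would observe that by construction $T_f^- < A < T_f^+$, so $(T_f^-|T_f^+) \geq 1$. To pump this, note that $M \smallsetminus \operatorname{int} D^+$ is closed in $\overline{\mathcal{PT}}$, does not contain $T_f^+$, and so by Proposition \ref{north-south} the iterates $f^{-n}(M \smallsetminus \operatorname{int} D^+)$ converge uniformly to $T_f^-$; in particular there is some $n_0 \geq 1$ with $f^{-n_0}(M \smallsetminus \operatorname{int} D^+) \subset \operatorname{int} D^-$, i.e. $\operatorname{int} D^+ \cup \operatorname{int}(f^{n_0} D^-) = M$. Unwinding definitions, this says $A < f^{n_0} A$. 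Applying $f^{jn_0}$ and using $Out(F_n)$-invariance of the annulus system we get an infinite nested chain
\[
A < f^{n_0} A < f^{2n_0} A < \cdots,
\]
so in particular $(T_f^-|T_f^+) = \infty$.

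Next I would promote this to linear growth on a triple. Pick an irreducible-tree triple $x = (a,b,c) \in Q$ with $a,b,c \notin \{T_f^+, T_f^-\}$ (such triples exist since $M$ is uncountable). Applying Proposition \ref{north-south} again to the compact sets $\{a,b\}$ and $\{f^m a, f^m b\}$, there exist integers $K, L \geq 0$ so that $f^{-K}\{a,b\} \subset \operatorname{int} D^-$ and, for all $m$ sufficiently large, $f^{L}\{f^m a, f^m b\} = f^{L+m}\{a, b\} \subset \operatorname{int} D^+$. Hence $\{a,b\} < f^{K} A$ and $f^{-L} A < \{f^m a, f^m b\}$, and combining with the nested chain above yields at least
\[
\big\lfloor (m - K - L)/n_0 \big\rfloor
\]
annuli in $\mathcal{A}$ separating $\{a,b\}$ from $\{f^m a, f^m b\}$. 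Therefore $\rho(x, f^m x) \geq (ab \,|\, f^m a\, f^m b)$ grows linearly in $m$. Since $\rho$ is quasi-isometric to the edge-path metric $d$ on $\X$, the translation length of $f$ in $\X$ is positive, as desired.

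The only mild technical point — and thus the only real obstacle — is translating the topological statement of north-south dynamics on $\overline{\mathcal{PT}}$ into the combinatorial nesting relation $A < f^{n_0} A$; once this is done, the rest is formal manipulation identical to the mapping class group case. Everything else is a direct transcription of the argument of Proposition \ref{orbits}.
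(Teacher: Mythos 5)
Your argument is correct and is essentially the paper's own proof: the paper disposes of this proposition in one line by invoking north--south dynamics (``pumping''), exactly the mechanism you spell out via the nested chain $A<f^{n_0}A<f^{2n_0}A<\cdots$, just as in the sketch for Proposition \ref{orbits} in the \MCG case. The only cosmetic repair needed is to apply Proposition \ref{north-south} to the compact set $\overline{\mathcal{PT}}\setminus \operatorname{int}D^+$ rather than to $M\setminus \operatorname{int}D^+$ (the set $M$ of irreducible trees need not be closed), which changes nothing in the argument.
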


\begin{proof}
By the north-south dynamics we see that
$d(x,xf_i^N)\to\infty$, in fact $\lim\inf\frac {d(x,xf_i^N)}N>0$ for
any $x\in \X$.
\end{proof}

\begin{lemma}\label{scaling splittings}
Let $S\in\overline{\mathcal T}$ be a simplicial tree and $g_i\in
Out(F_n)$ an infinite sequence such that $Sg_i\to T$ projectively. If
$\lambda_i$ is a scaling sequence such that $Sg_i/\lambda_i\to T$,
then $\lambda_i$ is bounded from below. Furthermore, if for every
$\epsilon>0$ there are conjugacy classes with length in $T$ in the
interval $(0,\epsilon)$ then $\lambda_i\to\infty$.
\end{lemma}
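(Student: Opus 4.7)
The plan is to exploit the discreteness of the translation length spectrum of a simplicial tree. Since $S$ is a cocompact simplicial $F_n$-tree (possibly with elliptic elements, as $S\in\overline{\mathcal T}$), it has only finitely many orbits of edges, so its edges have a minimum positive length $\epsilon_0>0$. Any hyperbolic element of $F_n$ translates along an axis which is a union of edges of $S$, so its translation length is at least $\epsilon_0$. In other words, the length function $\langle S,\cdot\rangle$ on conjugacy classes takes values in $\{0\}\cup[\epsilon_0,\infty)$. This will provide the obstruction driving both conclusions.

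For the first statement, suppose for contradiction that a subsequence $\lambda_{i_k}\to 0$. Since $T$ is a nontrivial minimal $F_n$-tree, there exists a conjugacy class $\gamma$ with $\langle T,\gamma\rangle>0$. From $Sg_i/\lambda_i\to T$ we obtain
$$\frac{\langle S,g_i(\gamma)\rangle}{\lambda_i}=\frac{\langle Sg_i,\gamma\rangle}{\lambda_i}\longrightarrow \langle T,\gamma\rangle>0.$$
Along the subsequence, since $\lambda_{i_k}\to 0$ and the ratio converges to a positive finite number, the numerator $\langle S,g_{i_k}(\gamma)\rangle$ must tend to $0$. By the discreteness gap, this forces $\langle S,g_{i_k}(\gamma)\rangle=0$ for all large $k$, which makes the ratio identically $0$, contradicting its convergence to $\langle T,\gamma\rangle>0$. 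Thus $\liminf_i\lambda_i>0$.

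For the second statement, assume some subsequence $\lambda_{i_k}$ stays bounded above by $M$. Using the hypothesis, choose a conjugacy class $\gamma$ with $0<\langle T,\gamma\rangle<\epsilon_0/(2M)$. Then for all large $k$,
$$\langle S,g_{i_k}(\gamma)\rangle \;=\; \lambda_{i_k}\cdot\frac{\langle Sg_{i_k},\gamma\rangle}{\lambda_{i_k}} \;\leq\; M\cdot 2\langle T,\gamma\rangle \;<\;\epsilon_0.$$
Again by the discreteness gap, this forces $\langle S,g_{i_k}(\gamma)\rangle=0$ for large $k$, and the ratio is $0$, contradicting convergence to $\langle T,\gamma\rangle>0$. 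Hence $\lambda_i\to\infty$.

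The main (and only mildly delicate) point is the initial observation that a simplicial tree in $\overline{\mathcal T}$ has a positive lower bound on nonzero translation lengths; once that is in hand, both conclusions follow immediately by comparing the ratios $\langle Sg_i,\gamma\rangle/\lambda_i$ with this gap, choosing $\gamma$ appropriately in each case (any $\gamma$ with $\langle T,\gamma\rangle>0$ for the first, and $\gamma$ with arbitrarily small positive $\langle T,\gamma\rangle$ for the second).
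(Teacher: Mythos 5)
Your proof is correct and follows essentially the same route as the paper: both arguments rest on the gap $\langle S,\cdot\rangle\in\{0\}\cup[\epsilon_0,\infty)$ for the simplicial tree $S$ together with the convergence $\langle S,g_i(\gamma)\rangle/\lambda_i=\langle Sg_i/\lambda_i,\gamma\rangle\to\langle T,\gamma\rangle$, the paper phrasing it as the single bound $\liminf\lambda_i\geq\eta/L$ (which yields the second claim by letting $L\to 0$). Your subsequence/contradiction packaging and the explicit justification of the length gap are fine variations, not a different method.
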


\begin{proof}
Let $\gamma$ be a conjugacy class with $\langle
T,\gamma\rangle=L>0$. Then $$\langle
S,g_i(\gamma)\rangle/\lambda_i=\langle Sg_i/\lambda_i,\gamma\rangle\to
L$$ and since eventually $\langle S,g_i(\gamma)\rangle\geq \eta>0$
(nonzero translation lengths in $S$ are always bounded away from 0) we
see that $\lim\inf\lambda_i\geq \frac \eta L$.
\end{proof}

\begin{lemma}\label{one simplicial two irreducibles}
If $S$ is a simplicial tree and $p,q$ are irreducible trees, $p\neq
q$, then $(S|pq)<\infty$.
\end{lemma}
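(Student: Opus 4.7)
Suppose for contradiction $(S|pq)=\infty$, so there exist infinitely many distinct $g_i\in Out(F_n)$ with $Sg_i^{-1}\in D^-$ and $pg_i^{-1},qg_i^{-1}\in D^+$; the opposite orientation is handled symmetrically. Pre-translating $S,p,q$ out of $D^-\cup D^+$ and passing to a subsequence, all sequences $Sg_i^{\pm 1}$, $pg_i^{\pm 1}$, $qg_i^{\pm 1}$ converge projectively, with scaling sequences $\sigma_i,\alpha_i,\beta_i$ and $\sigma_i',\alpha_i',\beta_i'$ and limits $T_S\in\overline{D^-}$, $T_p,T_q\in\overline{D^+}$, $T_S',T_p',T_q'$.

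Two inputs drive the argument. First, since $S$ is simplicial and $T_S$ lies near the irreducible tree $T_f^-$---which, having dense orbits, admits arbitrarily small positive translation lengths---the ``furthermore'' clause of Lemma \ref{scaling splittings} gives $\sigma_i\to\infty$. Second, since $p\neq q$ are irreducible, Proposition \ref{T1} applied to $pg_i^{-1},qg_i^{-1}$ forces $\alpha_i\to\infty$ or $\beta_i\to\infty$; after relabeling, assume $\alpha_i\to\infty$.

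The contradiction is then extracted from the constant identity $\langle Sg_i^{-1},g_i(\eta)\rangle=\langle S,\eta\rangle$ with $\eta\in\{p^*,q^*\}$. A case analysis according to whether $pg_i\to p$ and $qg_i\to q$---using Lemma \ref{scaling} when one such limit differs from its source, and Lemma \ref{doubly bad} in the remaining case (whose current-side hypotheses $g_i(p^*)\not\to q^*$ and $g_i(q^*)\not\to p^*$ are satisfied because $T_p,T_q$ near $T_f^+$ are distinct from $p,q$)---produces a scaling sequence $\tau_i\in\{\alpha_i',\beta_i'\}$ for $g_i(p^*)$ or $g_i(q^*)$. A second invocation of Proposition \ref{T1}, this time to $pg_i,qg_i$, lets us arrange $\tau_i\to\infty$; combined with $\sigma_i\to\infty$ this forces $\sigma_i\tau_i\to\infty$. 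Dividing the constant identity by $\sigma_i\tau_i$ and passing to the limit yields $\langle T_S,\Upsilon\rangle=0$, where $\Upsilon$ is a projective representative of $(T_p)^*$ or $(T_q)^*$ in $\mathcal M(F_n)$, close to $\Upsilon_f^-$. Since $\langle T_f^-,\Upsilon_f^-\rangle=\langle T_0,\Upsilon_f^-\rangle>0$ for any $T_0\in\mathcal T$, continuity of the length pairing ensures $\langle T_S,\Upsilon\rangle>0$ when $D^\pm$ are small enough, contradicting the vanishing. The main technical obstacle is the case split itself: one must verify in each sub-case---particularly the mixed ones where only one of $pg_i,qg_i$ has a non-source projective limit---that Proposition \ref{T1} can indeed be invoked on the primed scalings so as to guarantee $\tau_i\to\infty$.
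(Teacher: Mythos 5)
Your overall strategy is the paper's: contradict $(S|pq)=\infty$ by showing the constant quantity $\langle Sg_i^{-1},g_i(p^*)\rangle$ (or the same with $q^*$) is forced to vanish in the limit, using Proposition \ref{T1} together with Lemmas \ref{scaling} and \ref{doubly bad} to make a scaling sequence for one of the dual currents blow up, and Lemma \ref{scaling splittings} to control the simplicial factor. But the step you defer as ``the main technical obstacle'' is a genuine gap, not a verification. In the mixed case, say $pg_i\to p$ projectively while $qg_i\to T_p'\neq p$... more precisely $qg_i\to T_q'\neq q$, Lemma \ref{doubly bad} is unavailable (it needs both $pg_i\to p$ and $qg_i\to q$), and Lemma \ref{scaling} applies only to $q$, producing $\beta_i'$ as a scaling sequence for $g_i(q^*)$. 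Proposition \ref{T1} applied to $pg_i,qg_i$ guarantees only that one of $\alpha_i',\beta_i'$ tends to infinity; it does not let you ``arrange'' that it is $\beta_i'$. If $\alpha_i'\to\infty$ while $\beta_i'$ stays bounded, your case analysis produces no $\tau_i\to\infty$ attached to $g_i(p^*)$ or $g_i(q^*)$, and the limiting computation never yields $\langle T_S,\Upsilon\rangle=0$. The repair is to use half of Lemma \ref{doubly bad}, i.e.\ its proof rather than its statement: if $pg_i/\alpha_i'\to p$ then $\langle p,g_i(q^*)\rangle/\alpha_i'=\langle pg_i/\alpha_i',q^*\rangle\to\langle p,q^*\rangle>0$, and since $g_i(q^*)=(qg_i^{-1})^*$ cannot converge projectively to $p^*$ (by Lemma \ref{dual convergence}, because $qg_i^{-1}\in D^+$ and $p\notin D^+$), $\alpha_i'$ is a scaling sequence for $g_i(q^*)$. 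With this observation, whichever primed scaling blows up gets matched to one of the two dual currents and the argument closes; the paper organizes things in that order (first select, via Proposition \ref{T1}, the tree whose scaling sequence blows up, then transfer it to a dual current).

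Two smaller points. Your justification that $\sigma_i\to\infty$ is not valid: the ``furthermore'' clause of Lemma \ref{scaling splittings} requires the limit tree $T_S$ itself to carry conjugacy classes of arbitrarily small positive length, and projective proximity of $T_S$ to $T_f^-$ does not give this---the topology controls only finitely many translation lengths at a time, and $T_S$ could perfectly well be a simplicial tree with a definite shortest edge. Fortunately you do not need $\sigma_i\to\infty$: the first clause of Lemma \ref{scaling splittings} gives $\sigma_i$ bounded below, which together with $\tau_i\to\infty$ already forces $\sigma_i\tau_i\to\infty$; this is exactly what the paper uses. Finally, your first invocation of Proposition \ref{T1} (on $pg_i^{-1},qg_i^{-1}$, producing $\alpha_i\to\infty$) is never used and should be deleted, and the identity $\langle T_f^-,\Upsilon_f^-\rangle=\langle T_0,\Upsilon_f^-\rangle$ is not one you have (nor need); the positivity $\langle T_f^-,\Upsilon_f^-\rangle>0$ follows from Corollary \ref{easy}(2), since otherwise $\Upsilon_f^-$ would equal $(T_f^-)^*=\Upsilon_f^+$.
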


\begin{proof}
To simplify notation we assume that $\{f_1,\cdots,f_k\}=\{f\}$.  We
may assume that $p,q\not\in D^+\cup D^-$.  If $(S|pq)=\infty$ then
there are infinitely many $g_i\in Out(F_n)$ such that $Sg_i^{-1}\in
D^-$ and $pg_i^{-1},qg_i^{-1}\in D^+$ (or interchange $D^-$ and
$D^+$). We may assume that sequences $Sg_i^{-1},g_i(p^*),g_i(q^*)$
converge projectively, say to $T,\Upsilon_p,\Upsilon_q$
respectively. A scaling sequence for one of $pg_i$ or $qg_i$ must go
to infinity by Proposition \ref{T1}, say for the former. Then Lemma
\ref{scaling} implies that a scaling sequence $\mu_i$ for $g_i(p^*)$
also goes to infinity ($pg_i^{-1}\in D^-$ cannot converge to $p\not\in
D^-$). Let $\lambda_i$ be a scaling sequence for $Sg_i^{-1}$. Since
$\lambda_i\mu_i\to\infty$ by Lemma \ref{scaling splittings}, it
follows that $$\langle T,\Upsilon_p\rangle=\langle \lim
Sg_i^{-1}/\lambda_i, \lim g_i(p^*)/\mu_i\rangle=\lim \langle
S,p^*\rangle/(\lambda_i\mu_i)=0$$ which is a contradiction since $T$
is close to $T^-$ and $\Upsilon_p$ is close to $(T^+)^*$.
\end{proof}

\begin{prop}
Let $S$ be a simplicial tree. Then the stabilizer $Stab(S)\subset
Out(F_n)$ acts on $\X$ with bounded orbits.
\end{prop}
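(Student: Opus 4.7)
The plan is to adapt the orbit argument of Proposition \ref{orbits} (the \MCG analogue), with the simplicial tree $S$ playing the role of the curve $a$ and with Lemma \ref{one simplicial two irreducibles} replacing Lemma \ref{hybrid}.

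First, I would fix a vertex $A=(p_1,p_2,p_3)\in \X$ and set
$$N := \max_{i\neq j}(S\mid p_ip_j),$$
which is finite by Lemma \ref{one simplicial two irreducibles}. Equivariance of the annulus-counting crossratio together with $Sg=S$ immediately gives
$$(S\mid p_ug p_vg) = (Sg^{-1}\mid p_u p_v)=(S\mid p_up_v)\leq N$$
for every $g\in Stab(S)$ and every $u\neq v$.

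Next, to bound $\rho(A,Ag)=\max_{i\neq j,\,u\neq v}(p_ip_j\mid p_ug p_vg)$, I would repeat the combinatorial pigeonhole used in Proposition \ref{orbits}. Given $i\neq j$ and $u\neq v$, set $D=(p_ip_j\mid p_ug p_vg)$ and choose a maximal chain $B_1<B_2<\cdots<B_D$ of annuli separating $\{p_i,p_j\}$ from $\{p_ug,p_vg\}$. At most one $B_k$ can lie in the ``middle'' position with respect to $S$ (meaning $S\notin B_k^-\cup B_k^+$); discard it. For each of the remaining $D-1$ annuli, $S$ lies in one of $B_k^-$ or $B_k^+$, so by pigeonhole at least $\lceil (D-1)/2\rceil$ of them separate $S$ from $\{p_i,p_j\}$ or separate $S$ from $\{p_ug,p_vg\}$. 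This forces
$$\frac{D-1}{2}\leq \max\bigl((S\mid p_ip_j),\,(S\mid p_ug p_vg)\bigr)\leq N,$$
so $D\leq 2N+1$. Taking the maximum over $i,j,u,v$ yields $\rho(A,Ag)\leq 2N+1$. Since $\X=G_r(Q)$ is quasi-isometric to $(Q,\rho)$, the orbit $A\cdot Stab(S)$ is also bounded in the edge-path metric $d$.

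The only genuine difficulty is the ``at most one annulus contains $S$'' step: $S$ is simplicial, hence not an element of $M$ (which consists of irreducible trees), so we must interpret the sets $B_k^\pm$ as closed subsets of the ambient space $\overline{\mathcal{PT}}$ and verify that consecutive annuli in a chain $B_k<B_{k+1}$ cannot both have $S$ strictly in the complement of $B^-\cup B^+$. This is precisely the same point that arises in Proposition \ref{orbits}, where the curve $a$ also lies outside the ambient $M\subset\mathcal{PML}$, and it is handled by the same reasoning: if both $B_k$ and $B_{k+1}$ had $S$ in the middle, then $S$ would avoid $B_k^+\cup B_{k+1}^-$, contradicting the definition $B_k<B_{k+1}$ once the neighborhoods $D_i^\pm$ are interpreted as closed sets in $\overline{\mathcal{PT}}$.
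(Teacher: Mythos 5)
Your argument is correct and is essentially the paper's own proof: the paper simply declares this proposition ``identical to the proof of Proposition \ref{orbits}'', i.e.\ exactly the pigeonhole-on-annuli argument you give, with Lemma \ref{one simplicial two irreducibles} supplying the finiteness of $N=\max_{i\neq j}(S|p_ip_j)$ in place of Lemma \ref{hybrid}. Your closing remark about interpreting the annuli as closed subsets of $\overline{\mathcal{PT}}$ when testing containment of the non-irreducible tree $S$ is the same (implicit) convention the paper uses, so there is no discrepancy.
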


\begin{proof}
Identical to the proof of Proposition \ref{orbits}.
\end{proof}

\begin{lemma}\label{splitting complex}
  If $S,S'$ are simplicial trees in $\overline{\mathcal T}$ and if
  there is a nontrivial conjugacy class $\gamma$ that is elliptic in
  both $S$ and $S'$ and that is contained in a proper free factor of
  $F_n$ then $(S|S')=0$.
\end{lemma}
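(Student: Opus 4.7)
The plan is by contradiction, using the Kapovich--Lustig length pairing. Suppose $(S|S')>0$. Then some annulus in the system separates $S$ from $S'$, so there exist an index $i$ and $g\in Out(F_n)$ with $Sg^{-1}\in D_i^-$ and $S'g^{-1}\in D_i^+$ (possibly after swapping $D_i^-$ and $D_i^+$). Setting $\alpha:=g(\gamma)$, the class $\alpha$ is elliptic in both $Sg^{-1}$ and $S'g^{-1}$ (so $\langle Sg^{-1},\eta_\alpha\rangle=\langle S'g^{-1},\eta_\alpha\rangle=0$), and $\alpha$ lies in the proper free factor $g(F)$.

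I would then argue, by a compactness argument parallel to Lemmas \ref{OutA1}, \ref{OutA2}, and \ref{one simplicial two irreducibles}, that for $D_i^\pm$ chosen sufficiently small at the start of the construction this configuration is impossible. Suppose otherwise: for shrinking neighborhoods we obtain simplicial trees $T_n\to T_{f_i}^-$ and $T_n'\to T_{f_i}^+$ projectively in $\overline{\mathcal T}$, together with conjugacy classes $\alpha_n$ in proper free factors that are elliptic in both $T_n$ and $T_n'$. After normalizing the currents $\eta_{\alpha_n}$, pass to a projective accumulation point $\Upsilon\in\mathcal{PMC}(F_n)$. Lemma \ref{scaling splittings} provides divergent scaling sequences for $T_n$ and $T_n'$, because $T_{f_i}^\pm$ admit arbitrarily short conjugacy classes (apply $f_i^{\mp N}$ to any fixed class); continuity of the Kapovich--Lustig pairing then yields $\langle T_{f_i}^-,\Upsilon\rangle=0=\langle T_{f_i}^+,\Upsilon\rangle$. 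Provided $\Upsilon\in\mathcal M(F_n)$, Corollary \ref{easy}(2) forces $\Upsilon$ to equal both $\Upsilon_{f_i}^+$ and $\Upsilon_{f_i}^-$ projectively, the desired contradiction.

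The principal obstacle is ensuring $\Upsilon\in\mathcal M(F_n)$: since the $\alpha_n$ are only assumed to lie in proper free factors and need not be primitive in $F_n$, the currents $[\eta_{\alpha_n}]$ need not lie in $\mathcal M(F_n)$ a priori. I expect the cleanest route is to establish the auxiliary fact that $[\eta_\alpha]\in\mathcal M(F_n)$ whenever $\alpha$ lies in a proper free factor, e.g.\ by using that $\mathcal M(F_n)$ is the unique minimal closed $Out(F_n)$-invariant subset of $\mathcal{PMC}(F_n)$ for $n\geq 3$, together with a diagonal modification replacing each $\alpha_n$ by a nearby primitive class $\tilde\alpha_n$ (obtained by extending a basis of the ambient proper free factor to a basis of $F_n$ and perturbing $\alpha_n$ so as to become primitive while preserving the vanishing conditions in the limit). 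Making this diagonalization preserve both $\langle T_{f_i}^\pm,\cdot\rangle=0$ conditions is the technical heart of the argument; once it is in place, the remainder follows the standard limit-and-contradict template of the previous lemmas.
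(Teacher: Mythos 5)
Your first step and your overall contradiction scheme match the paper's proof: translate so that $Sg^{-1}\in D^-$, $S'g^{-1}\in D^+$, observe that $\alpha=g(\gamma)$ has length exactly $0$ in both translated trees, and conclude via a limiting argument (as $D^\pm$ shrink) that some current in $\mathcal M(F_n)$ would have zero length in both $T_f^+$ and $T_f^-$, contradicting Corollary \ref{easy}(2). However, the point you correctly single out as the ``principal obstacle'' --- that $[\eta_\alpha]\in\mathcal M(F_n)$ when $\alpha$ lies in a proper free factor --- is left as an unproven auxiliary claim, and your proposed fix (a diagonal replacement of each $\alpha_n$ by a nearby primitive class while ``preserving the vanishing conditions in the limit'') is exactly the part you admit you cannot carry out; as written this is a genuine gap, since perturbing $\alpha_n$ to a primitive class destroys the exact ellipticity $\langle T_n,\alpha_n\rangle=\langle T_n',\alpha_n\rangle=0$ on which the whole limit argument rests, and controlling the error uniformly in $n$ is not addressed. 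The paper closes this gap with no work at all: it quotes Martin's thesis \cite{reiner}, which proves precisely that $\eta_\gamma\in\mathcal M(F_n)$ if (and only if) $\gamma$ is contained in a proper free factor; with that fact in hand the rest of the proof is your first paragraph plus the limiting argument already made in Lemma \ref{OutA1}.

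If you want to avoid the citation, there is a much more direct route than your diagonalization: a proper free factor lies in a corank-one free factor, so write $F_n=F'*\langle t\rangle$ with $\gamma\subset F'$; the classes $\gamma^m t$ are primitive (they are images of $t$ under the automorphisms fixing $F'$ and sending $t\mapsto\gamma^m t$), and $[\eta_{\gamma^m t}]\to[\eta_\gamma]$ as $m\to\infty$, so $[\eta_\gamma]\in\mathcal M(F_n)$ by closedness of $\mathcal M(F_n)$ --- no interaction with the trees $T_n$, $T_n'$ is needed, because once $[\eta_{\alpha_n}]\in\mathcal M(F_n)$ the accumulation point $\Upsilon$ automatically lies in the closed set $\mathcal M(F_n)$. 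Two smaller remarks: your appeal to Lemma \ref{scaling splittings} and to divergent scaling sequences is superfluous (the pairings vanish identically, and the pairing is homogeneous, so any projective normalization gives $\langle T_{f}^\pm,\Upsilon\rangle=0$ in the limit), and in any case that lemma concerns translates $Sg_i$ of a single tree, which your trees $T_n$ need not be.
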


\begin{proof}
Martin \cite{reiner} proved that $\eta_\gamma\in\mathcal M$ (and he
proved the converse as well).
If $(S|S')>0$ then
there is $g\in Out(F_n)$ with $Sg^{-1}\in D^-$ and $S'g^{-1}\in D^+$
(or interchange $D^+$ and $D^-$). Then $\langle
Sg^{-1},g(\gamma)\rangle =\langle S'g^{-1},g(\gamma)\rangle=0$, so 
the current $\eta_{g(\gamma)}$ has length 0 in a tree close to
$T^+$ and in a tree close to $T^-$, contradiction.
\end{proof}

\begin{prop}\label{bounded orbits}
The stabilizer of the conjugacy class of a proper free factor has
bounded orbits in $\X$.
\end{prop}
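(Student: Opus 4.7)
The plan is to mimic the MCG argument in Proposition~\ref{orbits}, with a carefully chosen simplicial tree $S$ playing the role of the invariant curve $a$.  Given a proper free factor $F$, write $F_n = F * F''$ as a nontrivial free product decomposition and take $S \in \overline{\mathcal{T}}$ to be the Bass--Serre tree of this splitting, so that $F$ is elliptic in $S$.  Fix a vertex $v = (p_1,p_2,p_3)\in\X$ and set $N_0 := \max_{i\neq j}(S|p_ip_j)$, which is finite by Lemma~\ref{one simplicial two irreducibles}.

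The key observation is that although $g\in Stab([F])$ does not in general fix $S$, the translated tree $Sg^{-1}$ is again a simplicial tree in which $F$ is elliptic.  Indeed, for any nontrivial $\gamma\in F$, the conjugacy class $g^{-1}(\gamma)$ lies in $g^{-1}(F)$, which is conjugate to $F$ and hence elliptic in $S$; so $\langle Sg^{-1},\gamma\rangle = \langle S,g^{-1}(\gamma)\rangle = 0$.  Because $F$ is a proper free factor, Lemma~\ref{splitting complex} then yields the crucial vanishing $(S|Sg^{-1})=0$.  This is the right ``coarse invariance'' replacement for the equality $g(a)=a$ used in the MCG proof.

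To bound $\rho(v, gv)$ I would apply the triangle inequality $(A|B)\leq(A|x)+(x|B)+1$ (the remark following Proposition~\ref{orbits}) twice with $x=S$, together with $Out(F_n)$-invariance of the annulus system in the form $(S|g(p_u)g(p_v)) = (Sg^{-1}|p_up_v)$.  The first application gives $(p_ip_j|g(p_u)g(p_v)) \leq N_0 + (Sg^{-1}|p_up_v) + 1$, and the second gives $(Sg^{-1}|p_up_v) \leq (Sg^{-1}|S)+(S|p_up_v)+1 = N_0+1$ thanks to the vanishing above.  Combining these yields $\rho(v,gv)\leq 2N_0+2$ uniformly in $g\in Stab([F])$, so the $Stab([F])$-orbit of $v$ is bounded in the $\rho$-quasi-metric, and therefore also in the graph metric $d$.

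The main difficulty is purely conceptual: identifying the right analogue of the fixed curve from the MCG proof, and then checking it is coarsely fixed.  Once the simplicial tree $S$ is chosen and the coarse invariance $(S|Sg^{-1})=0$ is extracted from Lemma~\ref{splitting complex}, the remainder is a routine double application of the triangle inequality, parallel to the argument in Proposition~\ref{orbits} for \MCG.
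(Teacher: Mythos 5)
Your proposal is correct and is essentially the paper's own proof: the paper likewise fixes a simplicial tree $S$ in which the free factor is elliptic, gets $N=\max_{i\neq j}(S|p_ip_j)<\infty$ from Lemma \ref{one simplicial two irreducibles}, uses Lemma \ref{splitting complex} to conclude that no annulus separates $S$ from its $g$-translate, and then counts annuli (which is exactly the double application of the triangle inequality $(A|B)\leq (A|x)+(x|B)+1$ you invoke) to obtain the bound $2N+2$. The only difference is notational: the paper compares $S$ with $S'=Sg$ and the translated pair $p_ug,p_vg$, while you pull everything back by $g^{-1}$.
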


\begin{proof}
The proof is a variation of the proof of Proposition \ref{orbits}. Let
$A$ be a proper free factor of $F_n$. Fix a simplicial $F_n$-tree $S$
with $A$ fixing a vertex. Let $(p_1,p_2,p_3)\in\X$ and let $g\in
Out(F_n)$ fix $A$. We will argue that
$d((p_1,p_2,p_3),(p_1g,p_2g,p_3g))$ is bounded independently of
$g$. By Lemma \ref{one simplicial two irreducibles} we have
$N=\max_{i,j}(p_ip_j|S)<\infty$. 

Suppose there are $D$ disjoint annuli separating $p_i,p_j$ from
$p_ug,p_vg$ for some $i\neq j$ and $u\neq v$. Now consider $S$ and
$S'=Sg$. By Lemma \ref{splitting complex} no annulus separates $S$
from $S'$. Moreover, at most $N$ annuli separate $S$ from $p_i,p_j$ and at
most $N$ annuli separate $S'$ from $p_ug,p_vg$. We deduce that $D\leq
2N+2$. 
\end{proof}

\subsubsection{The complex of simplicial trees}
Lemma \ref{splitting complex} suggests the definition of another
$Out(F_n)$-complex, namely the {\it complex of simplicial trees}
$\mathcal{ST}(F_n)$. A vertex is represented by a minimal, non-free,
simplicial $F_n$-tree in $\overline{\mathcal T}$ without valence 2
vertices and all edge lengths 1. (Recall \cite{CL} that a minimal
nontrivial simplicial
$F_n$-tree is in $\overline{\mathcal T}$ if and only if it is {\it
  very small}, i.e. the edge stabilizers are cyclic, and for $g\neq 1$
we have that $Fix(g)$ does not contain a tripod and $Fix(g^m)=Fix(g)$
for all $m\neq 0$.)
Two such trees span an edge
if there is a nontrivial conjugacy class $\gamma$ that is elliptic in
both trees and such that $\gamma$ is contained in a proper free
factor.

When $n=2$ this graph is quasi-isometric to the Farey graph.

Now define $\Phi:\mathcal {ST}(F_n)\to \X$ by the rule that $\Phi(T)$ is a
triple that belongs to an orbit of uniformly bounded size (see the
discussion in Section \ref{comparison}).

\begin{prop}
$\Phi$ is coarsely well defined and it is Lipschitz.
\end{prop}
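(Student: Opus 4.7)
The plan is to mimic the proof of Lemma \ref{Phi} from the \MCG setting. As there, coarse well-definedness of $\Phi$ is a special case of the Lipschitz property (two choices for $\Phi(T)$ differ by an element of $Stab(T)$, which acts on $\X$ by isometries), so we reduce to establishing the Lipschitz bound. The first step is to combine Proposition \ref{bounded orbits}, which says that $Stab(T)$ has bounded orbits in $\X$, with the standard fact that any isometric action with bounded orbits on a $\delta$-hyperbolic graph has an orbit of diameter at most $8\delta$. This yields a uniform constant $K_\rho$ such that every $Stab(T)$ has an orbit of $\rho$-diameter at most $K_\rho$, and we define $\Phi(T)$ to be any triple in such an orbit.

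The central computation then parallels Lemma \ref{Phi}. Given adjacent $T, T' \in \mathcal{ST}(F_n)$ with $\Phi(T)=(p_1,p_2,p_3)$ and $\Phi(T')=(q_1,q_2,q_3)$, Lemma \ref{splitting complex} gives $(T|T')=0$; two applications of Bowditch's quasi-triangle inequality $(A|B)\leq (A|x)+(x|B)+1$ then yield
$$\rho(\Phi(T), \Phi(T')) = \max_{i\neq j,\,u\neq v}(p_ip_j | q_uq_v) \leq \max_{i\neq j}(p_ip_j | T) + \max_{u\neq v}(T' | q_uq_v) + 2.$$
Combined with the uniform estimates $\max_{i\neq j}(T | p_ip_j) \leq K_\rho$ and $\max_{u\neq v}(T' | q_uq_v) \leq K_\rho$, this gives $\rho(\Phi(T),\Phi(T')) \leq 2K_\rho + 2$, so $\Phi$ is $(2K_\rho+2)$-Lipschitz on adjacent pairs and hence globally Lipschitz by iteration along a path in $\mathcal{ST}(F_n)$.

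The main obstacle, and the step that deserves careful attention, is supplying the two uniform estimates of the previous paragraph; they are the $Out(F_n)$ counterpart of the lower-bound half of Remark \ref{orbitsize}. The surface recipe is to produce some $g \in Stab(T)$ whose forward iterates send any prescribed $p \neq T$ in $\overline{\mathcal{PT}}$ toward $T$: given a chain $p_ip_j < A_1 < \cdots < A_n < T$ realizing $n = (T|p_ip_j)$, one then has $p_ug^N, p_vg^N$ inside $-A_n$ for all sufficiently large $N$, so that $(p_ip_j | p_ug^N p_vg^N) \geq n$ and hence $n \leq K_\rho$. For simplicial $T$ with a nontrivial edge stabilizer $\langle c \rangle$ the natural candidate is the Dehn twist $D_c$, whose iterates on $\overline{\mathcal{T}}$ are known to converge projectively to $T$ off an exceptional set. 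The genuine technical work is to exhibit an element of $Stab(T)$ with the required north-south behavior on $\overline{\mathcal{PT}}$ for every vertex of $\mathcal{ST}(F_n)$, including those arising from free splittings with trivial edge stabilizers.
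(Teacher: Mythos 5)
Your outline is in fact the paper's proof: the paper proves this proposition by declaring it identical to Lemma \ref{Phi}, i.e.\ exactly your reduction of well-definedness to the Lipschitz bound, the choice of $\Phi(T)$ in a $Stab(T)$-orbit of $\rho$-diameter at most $K_\rho$ (Proposition \ref{bounded orbits} plus the $8\delta$ fact of Section \ref{comparison}), the vanishing $(T|T')=0$ from Lemma \ref{splitting complex}, and the crossratio triangle inequality. (One small slip: two admissible choices of $\Phi(T)$ need not differ by an element of $Stab(T)$, since they may lie in different bounded orbits; the correct reduction is that well-definedness is the case $T'=T$ of the Lipschitz estimate, using $(T|T)=0$.)

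The step you flag is indeed the crux, and since you do not supply it your proposal is incomplete exactly there: one needs the analogue of the lower-bound half of Remark \ref{orbitsize}, namely that $\max_{i\neq j}(T|p_ip_j)$ is bounded by the $\rho$-diameter of the $Stab(T)$-orbit of the triple; the paper leaves this implicit in the word ``identical''. Moreover, the remedy you propose --- a single element of $Stab(T)$ acting on $\overline{\mathcal{PT}}$ with north--south behavior toward $T$ --- is both stronger than necessary and unlikely to exist for all vertices of $\mathcal{ST}(F_n)$: for a one-edge free splitting $F_n=A*B$ the stabilizer contains no Dehn twist, and its natural infinite-order elements (partial conjugations, say by $a\in A$) have iterates converging projectively to the Bass--Serre tree of the cyclic splitting $A*_{\langle a\rangle}(\langle a\rangle *B)$ rather than to $T$; similarly, for a multi-edge $T$ a twist about one edge converges to a collapse of $T$, which need not lie in the relevant neighborhood $int\, A_1^-$ of $T$. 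What the counting argument actually requires is weaker: for every chain $\{T\}<A_1<\cdots<A_N<\{p_i,p_j\}$ there should be some $h\in Stab(T)$ (not a power of a fixed element) with all of $p_1h,p_2h,p_3h$ in $int\,A_1^-$; equivalently, the diagonal $Stab(T)$-orbit of the triple should accumulate at $T$ in $\overline{\mathcal{PT}}$. One can hope to get this by a diagonal argument --- iterates of twists/partial conjugations in $Stab(T)$ limit onto auxiliary cyclic-splitting trees compatible with $T$, and as the twisting elements are taken longer these auxiliary trees approach $T$ --- but some argument of this kind must be written down before the two uniform estimates $\max_{i\neq j}(T|p_ip_j)\leq K_\rho$ and $\max_{u\neq v}(T'|q_uq_v)\leq K_\rho$ driving your computation are justified.
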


\begin{proof}
Identical to the proof of Proposition \ref{Phi}.
\end{proof}

\begin{question}
Is $\Phi:\mathcal {ST}(F_n)\to\X$ coarsely onto?
\end{question}

\begin{question}
How dependent is $\X$ on the choice of $D^{\pm}$? For example, when
$D^{\pm}$ keep getting smaller, we expect that the natural maps
between $\X$'s do not eventually become quasi-isometries. Does Remark
\ref{ken} hold in the $Out(F_n)$ world?
\end{question}

\begin{question}
Is $\X$ quasi-isometric to a tree (provided $D^{\pm}$ are sufficiently
small)?
\end{question}

We finish this section by comparing $\X$ to two other
$Out(F_n)$-complexes. 

\subsubsection{The complex of free factors}
Let $\mathcal F(F_n)$ denote {\it complex of free factors}: its
vertices are conjugacy classes of proper free factors, and its
simplices are conjugacy classes of chains (ordered by inclusion) of
proper free factors. This complex has been introduced and studied by
Hatcher and Vogtmann \cite{HV}. It is a discrete set when $n=2$ and it
is connected when the rank $n>2$.

There is a map $\Psi:\mathcal F(F_n)\to \mathcal{ST}(F_n)$ given by the rule
that $\Psi(F)$ is the Bass-Serre tree associated with a splitting
$F_n=F*F'$. This map is coarsely well-defined and Lipschitz.

\subsubsection{The splitting complex}
A tree $S\in\overline{\mathcal T}$ is a {\it splitting tree} if it is
the Bass-Serre tree of a nontrivial splitting $F_n=A*B$. The {\it
  splitting complex} is the simplicial complex $\mathcal S(F_n)$ whose
vertices are splitting trees, and a collection $S_i$ of such trees spans a
simplex if there is a simplicial $F_n$-tree $S$ with trivial edge
stabilizers such that each $S_i$ can be obtained from $S$ by
equivariantly collapsing collections of edges. When $n=2$ this complex
is a discrete set and when $n>2$ it is connected.

There is a map $\Sigma:\mathcal S(F_n)\to \mathcal F(F_n)$ that to a
splitting $A*B$ assigns $A$. When $n>2$ this map is coarsely well
defined and Lipschitz.

To summarize, for $n>2$ we have maps
$$\mathcal S(F_n)\overset{\Sigma}\to \mathcal F(F_n)\overset\Psi\to 
\mathcal{ST}(F_n)\overset\Phi\to \mathcal X$$ The map $\Sigma$ is
coarsely onto by construction. The composition $\Psi\Sigma:\mathcal
S(F_n)\to\mathcal{ST}(F_n)$ (and hence also $\Psi$) is coarsely onto,
because a finite graph of groups with cyclic edge groups representing
$F_n$ can be converted to a finite graph of groups with trivial edge
groups by a (bounded) sequence of elementary moves, see
\cite{shenitzer}\cite{swarup}. 

\begin{remark}
  If one takes $\Phi(T)$ to be the subset of $\X$ consisting of points
  whose orbit under $Stab(T)$ has diameter bounded by $K_d$ (see
  Section \ref{comparison}), then $\Phi$ becomes an equivariant coarse
  map. It follows immediately that translation lengths in $\mathcal
  {ST}(F_n)$ of fully irreducible automorphisms are positive. The same
  statement holds for $\mathcal S(F_n)$ and $\mathcal F(F_n)$. This
  fact was proved for nongeometric automorphisms in
  \cite{ilya-lustig}.
\end{remark}

\begin{question}
Are $\Sigma$ and $\Psi$ quasi-isometries? We expect that $\Sigma$ is
not. More precisely, take a pseudo-Anosov homeomorphism $f$ on a
surface with two boundary components and view it as an element of
$Out(F_n)$ (this is possible when $n>2$). Then $f$ acts with bounded
orbits on $\mathcal F(F_n)$, but we expect that $f$ has positive
translation length in $\mathcal S(F_n)$.
\end{question}

\subsection{WPD}\label{WPD}

\begin{prop}\label{p:WPD}
  The elements $f_1,\cdots,f_k$ chosen at the start of the
  construction satisfy WPD: For every $i=1,2,\cdots,k$, every $x\in \X$,
  and every $C>0$ there is $N>0$ such that
$$\{g\in Out(F_n)| d(x,xg)\leq C, d(xf_i^N,xf_i^Ng)\leq C\}$$
is finite.
\end{prop}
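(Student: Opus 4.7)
\textbf{Proof plan for Proposition \ref{p:WPD}.}

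The plan is to follow the standard route for WPD in a hyperbolic-isometric action: combine the north-south dynamics of $f:=f_i$ with a rigidity argument showing that the stabilizer of the unordered pair $\{T_f^+,T_f^-\}$ in $Out(F_n)$ is virtually $\langle f\rangle$, and then use the positive translation length of $f$ in $\X$ to bound the $\langle f\rangle$-part of any offending $g$.

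First I would build an explicit long chain of separating annuli between $x$ and $xf^N$. Fix $x=(a,b,c)$ with $a,b,c\notin D^+\cup D^-$. By Proposition \ref{north-south}, for any sufficiently large constant $L$ we have $af^{-k},bf^{-k},cf^{-k}\in D^-$ whenever $k\ge L$ and $af^{N-k},bf^{N-k},cf^{N-k}\in D^+$ whenever $N-k\ge L$. Thus the translated annulus $Af^k\in\mathcal A$ satisfies $\{a,b,c\}<Af^k<\{af^N,bf^N,cf^N\}$ for $L\le k\le N-L$. A further application of north-south dynamics to the closed set $M\setminus\mathrm{int}\,D^+$ produces a spacing constant $L_0$ so that $Af^k<Af^{k'}$ whenever $k'-k\ge L_0$. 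Picking $k_j=L+jL_0$ yields a chain of $\sim N/L_0$ separating annuli, so $\rho(x,xf^N)$ grows linearly in $N$.

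Next I would show that any $g$ satisfying the hypothesis must essentially permute this chain. Since the edge-metric $d$ on $\X$ and the quasi-metric $\rho$ on the vertex set are quasi-isometric, we get $\rho(x,xg)\le C'$ and $\rho(xf^N,xf^Ng)\le C'$ for a constant $C'$ depending only on $C$, $\delta$ and $r$. Translating the chain by $g$ yields a second chain of $\sim N/L_0$ separating annuli between $xg$ and $xf^Ng$. The Bowditch-style triangle inequality $(K|L)\le(K|y)+(y|L)+1$ (see the remark after Proposition \ref{orbits}) forces this new chain to coincide with the original one except for $O(C')$ annuli near the two endpoints. Hence for most indices $j$ there is $j'$ with $Af^{k_j}\cdot g=\pm Af^{k_{j'}}$, equivalently $g=f^{-k_j}h\,f^{k_{j'}}$ for some $h\in\Gamma:=\{h\in Out(F_n):hA\in\{A,-A\}\}$.

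The final step is finiteness of $\Gamma/\langle f\rangle$. If $D^\pm$ are chosen small enough that no $Out(F_n)$-translate of $T_f^\pm$ other than $T_f^\pm$ itself lies in $D^\pm$, every $h\in\Gamma$ must send $\{T_f^+,T_f^-\}$ to itself, so a bounded power of $h$ commutes with $f$; the centralizer of a fully irreducible automorphism is virtually cyclic with $\langle f\rangle$ of finite index, so $\Gamma$ is virtually $\langle f\rangle$. Consequently $g=f^{k_{j'}-k_j}h'$ with $h'$ in a finite set, and the hypothesis $d(x,xg)\le C$ together with the positive translation length of $f$ in $\X$ bounds $|k_{j'}-k_j|$, giving finitely many possibilities for $g$. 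The main obstacle is the two ingredients just invoked --- isolating $\{T_f^\pm\}$ from all other $Out(F_n)$-translates (an $Out(F_n)$-analogue of Remark \ref{ken}) and the virtual cyclicity of the centralizer of a fully irreducible automorphism --- both of which rely on structural results going beyond the soft hyperbolic-geometry arguments used elsewhere in the paper.
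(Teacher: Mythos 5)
There are two genuine gaps, and both occur at the points your plan leans on hardest. First, the chain-rigidity step: from $\rho(x,xg)\leq C'$ and $\rho(xf^N,xf^Ng)\leq C'$ you conclude that the translated chain of annuli must ``coincide with the original one except near the endpoints,'' hence that $Af^{k_j}\cdot g=\pm Af^{k_{j'}}$ for most $j$. The crossratio axioms and the Bowditch triangle inequality only control the \emph{counts} $(\cdot|\cdot)$ of separating annuli; they never force two chains separating nearly the same pairs of triples to share a single annulus. A priori infinitely many distinct $g$ could satisfy the two displacement bounds while carrying the chain to chains that interleave the original without any translate of $A$ equaling a $\pm Af^{k_{j'}}$ --- indeed, ruling out exactly this is the content of WPD, so this step assumes what is to be proved. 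Second, the isolation hypothesis in your last step, that $D^\pm$ can be chosen so that no $Out(F_n)$-translate of $T_f^\pm$ other than $T_f^\pm$ itself lies in $D^\pm$, cannot be arranged: for any $h$ with $T_f^+h\neq T_f^\pm$, north--south dynamics gives $T_f^+hf^n\to T_f^+$, so every neighborhood of $T_f^+$ contains infinitely many distinct translates of $T_f^+$. The weaker pairwise isolation you would actually need (only elements commensurable with $f$ can carry the pair $(T_f^+,T_f^-)$ into $(D^+,D^-)$) is precisely the $Out(F_n)$ analogue of Remark~\ref{ken}, which the paper explicitly leaves as an open question, so it is not available.

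The paper's proof avoids both issues and is analytic rather than combinatorial: it first replaces $\X$ by the quasi-isometric complex $\X'$ built from a single orbit of an irreducible tree (Lemma~\ref{l:qi}), then supposes there are infinitely many $g_i$ satisfying the two displacement bounds, passes to projective limits, and uses the scaling-sequence results (Proposition~\ref{T1}, Lemma~\ref{scaling}, packaged as Lemma~\ref{l:scaling}) together with the continuity of the tree--current pairing to produce a tree $T$ near $T_f^-$ and a current $\Upsilon$ near $\Upsilon_f^-$ with $\langle T,\Upsilon\rangle=0$, contradicting the positivity built into the choice of the neighborhoods $U_0^-$ and $\Omega^-$. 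No structure of stabilizers or centralizers, and no rigidity of annulus chains, is needed; if you want to repair your argument you would essentially have to import this limiting tree--current argument, at which point the chain construction (which correctly reproves positive translation length of $f$) becomes superfluous.
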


The proof requires two lemmas.

\begin{lemma}\label{l:scaling}
Let $U$ and $U'$ be disjoint closed sets in $\overline{\mathcal{PT}}$
and let $P\not\in U\cup U'$ be irreducible. Suppose that for $s=1,2$:
\begin{itemize}
\item there are $f_s,
f_s'\in Out(F_n)$ such that $S_s=Pf_s\in U$ and $S'_s=Pf'_s\in
U'$, 
\item
$S_1\not=S_2$ and $S_1'\not= S_2'$,
\item
there are infinitely many distinct $g_i\in Out(F_n)$ such that
$S_sg_i\in U$ and $S_s'g_i\in U'$ for all $i$, and
\item
the sequences $S_sg_i$, $S_s'g_{i}$, $g_i^{-1}S_s^*$, and
$g_i^{-1}{S_s'}^*$ converge projectively.
\end{itemize}
Then
\begin{enumerate}
\item
three of four given scaling sequences for $S_sg_i$ and $S'_sg_i$, $s=1,2$,
converge to infinity, and
\item
three of four given scaling sequences for $g_i^{-1}S_s^*$ and $g_i^{-1}{S_s'}^*$, $s=1,2$, converge to
infinity.
\end{enumerate}
\end{lemma}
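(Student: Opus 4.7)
The plan is to apply Proposition \ref{T1} pairwise to the four trees, then extend the argument to currents via a pairing identity and Corollary \ref{easy}.

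First I would verify that $S_1,S_2,S_1',S_2'$ are four pairwise distinct irreducible trees: $S_1\neq S_2$ and $S_1'\neq S_2'$ are hypotheses, while $S_s\in U$, $S_t'\in U'$, and $U\cap U'=\emptyset$ yield $S_s\neq S_t'$. For conclusion (1), I would suppose for contradiction that two of the four tree-scaling sequences are bounded. The corresponding pair of distinct irreducible trees (with distinct $g_i$ and projectively convergent translates) would contradict Proposition \ref{T1}. Hence at most one tree-scaling is bounded, and at least three go to $\infty$.

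For conclusion (2), I would exploit the pairing identity
\[
  \langle Xg_i,\,g_i^{-1}(Y^*)\rangle=\langle X,Y^*\rangle,
\]
which is a positive constant for distinct irreducible $X\neq Y$, by Corollary \ref{easy}(3). Suppose for contradiction that two current-scalings $\mu_i^{Y_1},\mu_i^{Y_2}$ are bounded. Dividing the identity by $\alpha_i^X\mu_i^{Y_k}$ and letting $i\to\infty$, continuity of the pairing gives the equivalence ``$\alpha_i^X$ is bounded iff $\langle T_X',\Upsilon_{Y_k}\rangle>0$''. By Corollary \ref{easy}(2), each $\Upsilon_{Y_k}\in\mathcal{M}(F_n)$ vanishes on at most one irreducible tree, call it $T^{k*}$. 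A short case analysis---splitting on whether $T^{1*}=T^{2*}$ or not, and exploiting $U\cap U'=\emptyset$ to control how many of the $T_X'$ can coincide with $T^{k*}$---forces at least two of the $\alpha_i^X$ to be bounded, contradicting (1). Hence at most one $\mu^Y$ is bounded, giving (2).

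The principal obstacle is that the projective limits $T_X'$ need not be irreducible, so Corollary \ref{easy}(2) does not apply verbatim. I would resolve this as in the proof of Lemma \ref{OutA1}: since $T_X'\in U$ (respectively $U'$) is close to the irreducible fixed point $T_f^+$ (respectively $T_f^-$) once $D^{\pm}$ are small enough, a final limiting (diagonal) argument reduces the vanishing $\langle T_X',\Upsilon_{Y_k}\rangle=0$ to the analog at $T_f^{\pm}$, where Corollary \ref{easy}(2) directly applies.
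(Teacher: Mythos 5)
Your argument for conclusion (1) is exactly the paper's: the four trees $S_1,S_2,S_1',S_2'$ are pairwise distinct (the hypotheses plus $U\cap U'=\emptyset$), so Proposition~\ref{T1} applied to any pair rules out two bounded scaling sequences. The problem is conclusion (2), where your pairing argument has two genuine gaps. First, the implication you need --- ``$\langle T_X',\Upsilon_{Y_k}\rangle>0$ forces $\alpha_i^X$ bounded'' --- does not follow from $\langle Xg_i/\alpha_i^X,\,g_i^{-1}(Y_k^*)/\mu_i^{Y_k}\rangle=\langle X,Y_k^*\rangle/(\alpha_i^X\mu_i^{Y_k})$ unless the current scaling sequences $\mu_i^{Y_k}$ are bounded \emph{below} away from zero; positivity of the limit only bounds the product $\alpha_i^X\mu_i^{Y_k}$. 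Scaling sequences for currents can tend to $0$ (e.g.\ $f^i(\Upsilon_f^-)=\Upsilon_f^-/\mu^i$, the current analogue of the paper's remark that $\mu^iT_f^-f^i=T_f^-$), and you give no argument excluding this. Second, your own ``principal obstacle'' is not repairable inside the lemma: $U$ and $U'$ are \emph{arbitrary} disjoint closed sets in $\overline{\mathcal{PT}}$; there is no $f$, no $D^\pm$, and no smallness hypothesis, so the limits $T_X'$ are just points of $U$ or $U'$ and need not be irreducible nor close to any $T_f^\pm$. Appealing to ``$D^\pm$ small enough'' proves at best a different statement with an added smallness quantifier, and even then the case analysis is shaky because distinct translates $S_1g_i$, $S_2g_i$ may have the same projective limit, and Corollary~\ref{easy}(2) says nothing about vanishing of the pairing on non-irreducible limit trees.

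The paper's proof of (2) avoids the pairing altogether. One arranges, by Proposition~\ref{T1} (replacing $P$ by another point of its orbit outside $U\cup U'$ if necessary), that the scaling sequence $\mu_i$ for $Pg_i^{-1}$ tends to infinity. Then, writing $g_i^{-1}S_s^*=g_i^{-1}f_s^{-1}P^*$ and applying Lemma~\ref{scaling} to the translated sequence $Pg_i^{-1}f_s^{-1}$, one checks that $\mu_i$ is also a scaling sequence for $g_i^{-1}S_s^*$ provided $Pg_i^{-1}\not\to S_s$ projectively (the other hypothesis of Lemma~\ref{scaling}, $S_sg_i\not\to P$, holds because $S_sg_i\in U$ and $P\notin U$). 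Since $Pg_i^{-1}$ can converge to at most one of the four distinct trees $S_1,S_2,S_1',S_2'$, at least three of the current scaling sequences go to infinity. I suggest you rework (2) along these lines; your pairing computation is the right tool elsewhere (it is how Lemma~\ref{scaling} itself and Lemma~\ref{OutA1} are proved), but here the statement's generality and the possibility of scalings tending to zero defeat it.
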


\begin{proof}
Item~1 follows from Proposition~\ref{T1}. 

By taking a subsequence, we may assume that $Pg_i^{-1}$ converges
projectively. Using Proposition~\ref{T1} again, we may assume that the
scaling sequence $\mu_i$ for $Pg_i^{-1}$ converges to
infinity. (Otherwise replace $P$ by an element in the same orbit that
is also not in $U\cup U'$.)

For Item~2, we claim that if $Pg_i^{-1}\not\to S_1$ projectively then
$\mu_i$ is a scaling sequence for $g_i^{-1}S_{1}^*$. Note that $\mu_i$
is also a scaling sequence for the translated sequence
$Pg_i^{-1}f_1^{-1}$. By Lemma~\ref{scaling} $\mu_i$ is a scaling
sequence for $g_i^{-1}S_1^*=g_i^{-1}f_1^{-1}P^*$ if two conditions
hold: projectively $Pg_i^{-1}f_1^{-1}\not\to P$ and $Pf_1g_i\not\to
P$. By hypothesis, $Pf_1g_i=S_1g_i\not\to P$ and so the second
condition holds. If the first condition fails, then $Pg_i^{-1}\to
Pf_1=S_1$.

Item~2 now follows because the sequence $Pg^{-1}_i$ can converge to at
most one of $S_1$, $S_2$, $S_1'$, $S_2'$.
\end{proof}

Let $\X'$ be the Bowditch space obtained by using the $Out(F_n)$-orbit
of a single irreducible tree $P$ instead of all irreducible trees.

\begin{lemma}\label{l:qi}
$\X'$ and $\X$ are quasi-isometric.
\end{lemma}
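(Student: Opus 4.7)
I would prove this by showing the natural inclusion $\iota\colon \X' \hookrightarrow \X$ is a quasi-isometry. For the $1$-Lipschitz direction, observe that a chain of annuli $A_1 < \cdots < A_n$ that is valid in the full system on $M = \{\text{irreducible trees}\}$ remains a valid chain in the restricted system on $M' = P \cdot Out(F_n) \subset M$: the covering condition $\operatorname{int} A_i^+ \cup \operatorname{int} A_{i+1}^- \supset M$ certainly implies the analogous condition with $M'$ in place of $M$. Hence $\rho_{\X'} \geq \rho_{\X}|_{Q' \times Q'}$, every edge of $\X'$ is an edge of $\X$, and $\iota$ is $1$-Lipschitz. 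Using that $G_r(Q) \simeq (Q,\rho)$ for large $r$, the problem reduces to showing that $Q'$ is coarsely dense in $(Q,\rho_{\X})$: there is a constant $K$ such that every $(a,b,c) \in Q$ admits $(Pg_1,Pg_2,Pg_3) \in Q'$ with $\rho_{\X}((a,b,c),(Pg_1,Pg_2,Pg_3)) \leq K$.

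Coarse density has two ingredients. First, the orbit $P \cdot Out(F_n)$ is dense in the set of irreducible trees: any irreducible $T = T_h^+$ for a fully irreducible $h$ with $T_h^- \neq P$ projectively (always achievable) satisfies $Ph^n \to T_h^+ = T$ by Proposition~\ref{north-south}, so we can choose $Pg_1, Pg_2, Pg_3$ arbitrarily close to $a,b,c$ in $\overline{\mathcal{PT}}$ with all entries distinct. Second, we need a uniform local bound: the crossratio $(xy|zw)$ should be locally bounded on the space of distinct quadruples of irreducibles, meaning for every distinct $(a,b,c,d)$ there is a neighborhood $U$ and a constant $K(U)$ so that $(xy|zw) \leq K(U)$ for all distinct $(x,y,z,w) \in U$. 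Combining the two ingredients, we pick $Pg_1,Pg_2,Pg_3$ in a sufficiently small neighborhood of $(a,b,c)$, which uniformly bounds every crossratio $(p_i p_j \mid p'_u p'_v)$ with $p_\bullet \in \{a,b,c\}$ and $p'_\bullet \in \{Pg_1,Pg_2,Pg_3\}$, hence bounds $\rho_{\X}$ and completes coarse density.

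The technical heart---and the main obstacle---is the uniform local boundedness of the crossratio, a strengthening of the pointwise finiteness of Lemma~\ref{OutA1}. The proof should follow the same outline: if sequences $(a_n,b_n,c_n,d_n) \to (a,b,c,d)$ of distinct quadruples had $(a_n b_n \mid c_n d_n) \to \infty$, then for each $n$ one obtains $k_n \to \infty$ distinct elements $g_{n,1},\ldots,g_{n,k_n} \in Out(F_n)$ with $a_n g_{n,i}^{-1}, b_n g_{n,i}^{-1} \in D^-$ and $c_n g_{n,i}^{-1}, d_n g_{n,i}^{-1} \in D^+$. A diagonal extraction produces a single sequence of distinct $g_m$ with these properties for $(a_m,b_m,c_m,d_m)$; extracting scaling sequences for $a_m g_m^{-1},b_m g_m^{-1},c_m g_m^{-1},d_m g_m^{-1}$ (using closedness of $D^\pm$) and invoking Proposition~\ref{T1} and Corollary~\ref{easy}, the dynamical contradiction of Lemma~\ref{OutA1} runs through, with the limit quadruple $(a,b,c,d)$ playing the role of the fixed quadruple in the original argument. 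The subtlety is that the $Out(F_n)$-action on $\overline{\mathcal{PT}}$ is not jointly continuous, so passing between the sequences $a_m g_m^{-1}$ and $a g_m^{-1}$ must be handled carefully; closedness of $D^\pm$ together with Lemma~\ref{l:scaling} (just proved) supplies the needed control to carry the limit argument through.
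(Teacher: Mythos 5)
The decisive gap is in your ``technical heart,'' and it is not just that the uniform local boundedness lemma is only sketched --- it is that, even if proved, it would not apply to the configurations you need. The crossratios you must bound are $(p_ip_j\,|\,p'_up'_v)$ with $p_\bullet\in\{a,b,c\}$ and $p'_\bullet$ an approximant of the corresponding coordinate; since $\{i,j\}$ and $\{u,v\}$ are two-element subsets of $\{1,2,3\}$ they always share an index, so as the approximation improves these quadruples degenerate to quadruples with repeated entries, such as $(a,b,a,b)$ or $(a,b,a,c)$ --- never to four distinct points. Your lemma, stated only at distinct quadruples $(a,b,c,d)$ with a neighborhood $U$, therefore cannot be invoked for a single one of the nine crossratios in question, and near such partially degenerate quadruples a finiteness statement is exactly what would have to be proved (for other degenerate patterns, e.g.\ two entries of the \emph{same} pair colliding, it is false). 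The sketch of the lemma itself is also not solid: Proposition~\ref{T1} requires a \emph{fixed} pair of basepoints translated by distinct $g_i$, and Lemma~\ref{l:scaling} concerns a fixed $P$ and its translates $Pf_s$, so neither handles the varying basepoints $a_m\to a$ produced by your diagonal argument; the non-continuity issue you flag is named but not resolved. The paper avoids all of this: since Lemma~\ref{OutA2} gives (A2) with $k=0$, property (C1) of Section~\ref{s:crossratio} says that for any four points at most one of the three pairings has positive crossratio. Using north--south dynamics one picks $P_i$ in the orbit of $P$ so close to $T_i$ that $(P_iT_i\,|\,P_lT_m)>0$; this forces $(P_iP_l\,|\,T_iT_m)=0$ for all relevant indices, i.e.\ $\rho(p,t)=0$ exactly, with no uniform finiteness statement anywhere. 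Your first ingredient (orbit density via north--south dynamics) matches the paper; it is this second, soft step that you are missing and trying to replace with a much harder and misaimed estimate.

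A secondary issue: under your own reading, where $\X'$ is the Bowditch complex of the system induced on $M'=P\cdot Out(F_n)$, you only establish $\rho_{\X'}\ge\rho_\X$ on $Q'$, and a $1$-Lipschitz map with coarsely dense image need not be a quasi-isometry --- you would still need an upper bound for $\rho_{\X'}$ in terms of $\rho_\X$ (or for consecutive points of discrete paths) to control distances in $\X'$ from above. The paper sidesteps this by taking the metric on $\X'$ to be the restriction of $\rho$ to triples from the orbit, so that quasi-isometry reduces immediately to co-boundedness of $\X'$ in $\X$; if you keep your definition, the reverse comparison must be supplied, and it is not automatic.
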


\begin{proof}
Since the metric on $\X'$ is the restriction of the metric on $\X$, it
is enough to show that $\X'\subset \X$ is co-bounded. Let
$t=(T_1,T_2,T_3)\in\X$. Using north-south dynamics, since
$(T_i|T_lT_m)>0$ (in fact infinite) if $l\not=i\not=m$, there is
$p=(P_1,P_2,P_3)\in\X'$ such that $(P_iT_i|P_lT_m)>0$ (in fact
arbitrarily large) if $l\not= i\not= m$. Recalling that $k=0$,
Property (C1) in Section~\ref{s:crossratio} (see also the comment on
(C1) in that section) implies that $(P_iP_l|T_iT_m)=0$ for
$l\not=i\not= m$, i.e.\ $\rho(p,t)=0$.
\end{proof}

\begin{proof}[Proof of Proposition~\ref{p:WPD}]
By Lemma~\ref{l:qi}, it is enough to show that the proposition holds
for $\X'$. Denote $f=f_i$. By definition of distance in $\X'$, the condition
$d(x,y)\leq C$ is equivalent to $\rho(x,y)=max(X|Y)\leq C'$ for a suitable
$C'>0$, where $X,Y$ range over 2-element subsets of $x,y$
respectively.  Construct closed neighborhoods $U^\pm_j$ of $T_f^\pm$, $j=0,1$
and a closed neighborhood $\Omega^-$ of $\Upsilon_f^-$ so that:
\begin{itemize}
\item $U_0^\pm\supset U_1^\pm$.
\item $\langle T,\Upsilon\rangle >0$ if $T\in {U_0^-}$ and
  $\Upsilon\in \Omega^-$; moreover, a current dual to an irreducible tree
  $T'\in U_0^+$ belongs to $\Omega^-$.
\item $(U^\pm_1|M-U_{0}^\pm)>C'$.
\end{itemize}
This is possible by Proposition \ref{north-south} and the facts listed
in Section \ref{currents}.  The statement is invariant under replacing
$x$ by $xf^m$ for any $m$, so we may assume that if $x=(S_1,S_2,S_3)$
then, for two values of $s$ say $s=1,2$, $S_s\in U_1^-$. In this case,
we may also assume that $S'_s=S_sf^N\in U_1^+$ for $s=2,3$ and some
$N>0$.

Suppose that $g_1,g_2,\cdots$ is an infinite collection in $Out(F_n)$
so that $\rho(x,xg_i)\leq C'$ and $\rho(xf^N,xf^Ng_i)\leq C'$ for all
$i$. It follows
that, for each $i$, $S_1g_i$ and $S_2g_i$ belong to
$U^-_0$. Similarly, for each $i$, $S_2'g_i$ and $S_3'g_i$ belong to
$U^+_0$. Passing to a subsequence, $S_sg_i$ and
$g_i^{-1}S_s^*$, for $s=1,2$, and $S'_{s'}g_i$ and
$g_i^{-1}S_{s'}^{'*}$, for $s'=2,3$, all converge projectively.

By Lemma~\ref{l:scaling}, for either $s=1$ or 2 a scaling sequence for
$S_sg_i$ goes to infinity and for either $s'=2$ or 3 a scaling
sequence for $g_i^{-1}S_{s'}^{'*}$ goes to infinity. For convenience
assume these values are $s=1$ and $s'=3$. The other cases differ only in notation.

Let $S_1g_i/\lambda_i\to T$ and $g_i^{-1}{S'_3}^{*}/\mu_i\to
\Upsilon$. We have
$$\langle
T,\Upsilon\rangle=\langle \lim S_1g_i/\lambda_i,\lim g_i^{-1}{S'_3}^*/\mu_i\rangle = \lim
\langle S_1,{S'_3}^*\rangle/(\lambda_i\mu_i)=0$$
But $g_i^{-1}({S'_3}^*)=(S'_3g_i)^*\in \Omega^-$, so $\Upsilon\in\Omega^-$
and we have a contradiction to the second bullet in the proof.
\end{proof}

\subsection{Application to quasi-homomorphisms}

Recall that a {\it quasi-homomorphism} on a group $\G$ is a function
$\phi:\G\to\R$ such that
$\Delta(\phi):=\sup_{\gamma,\gamma'\in\G}|\phi(\gamma\gamma')-
\phi(\gamma)-\phi(\gamma')|<\infty$. The collection of all
quasi-homomorphisms on $\G$ is a vector space $QH(\G)$ that contains
bounded functions as well as homomorphisms $\G\to\R$. We denote by
$\widetilde{QH}(\G)$ the quotient of $QH(\G)$ by the subspace spanned
by bounded functions and homomorphisms. Then $\widetilde{QH}(\G)$ can
be identified with the kernel of the natural homomorphism
$H^2_b(\G;\R)\to H^2(\G;\R)$ from the second bounded cohomology of
$\G$ to the standard cohomology.

The following result was announced by Hamenst\"adt, who uses methods
of \cite{ursula}. We say that two fully irreducible automorphisms
$f,g\in Out(F_n)$ are {\it independent} if $\{T_f^+,T_f^-\}\cap
\{T_g^+,T_g^-\}=\emptyset$.

\begin{cor}\label{holy grail}
$\dim\widetilde{QH}(Out(F_n))=\infty$. Moreover, if $\Gamma<Out(F_n)$
is any subgroup that contains two independent fully irreducible
automorphisms, then $\dim\widetilde{QH}(\Gamma)=\infty$.
\end{cor}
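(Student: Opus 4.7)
The plan is to reduce to the Bestvina-Fujiwara construction of quasi-homomorphisms \cite{BF}, which produces an infinite-dimensional subspace of $\widetilde{QH}(\G)$ whenever a group $\G$ acts isometrically on a hyperbolic graph with two independent hyperbolic WPD elements. I will handle the second, stronger statement; the first follows by specializing to $\G=Out(F_n)$ with any pair of independent fully irreducible automorphisms (such pairs exist for $n\geq 2$, e.g.\ by \cite{FM}).

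Let $f,g\in\G$ be two independent fully irreducible automorphisms. First, I would apply the Main Theorem to the finite collection $\{f,g\}$, producing a connected $\delta$-hyperbolic $Out(F_n)$-graph $\X$ on which both $f$ and $g$ act with positive translation length; restrict the $Out(F_n)$-action to obtain an isometric $\G$-action. Next, Proposition \ref{p:WPD} gives WPD for $f$ and $g$ with respect to the full $Out(F_n)$-action, and WPD immediately passes to the subgroup $\G\subset Out(F_n)$.

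The remaining step is independence of $f$ and $g$ as hyperbolic isometries of $\X$, i.e.\ disjointness of their fixed point sets on the Gromov boundary $\partial\X$. Fix a base triple $A\in\X$. By the north-south dynamics of Proposition \ref{north-south}, combined with the Bowditch construction of $\X$, the forward and backward $f$-orbits of $A$ converge in $\X\cup\partial\X$ to two distinct boundary points naturally indexed by $T_f^+$ and $T_f^-$, and similarly for $g$. Since $\{T_f^+,T_f^-\}\cap\{T_g^+,T_g^-\}=\emptyset$ by hypothesis, $f$ and $g$ have disjoint attracting/repelling pairs on $\partial\X$, hence are independent hyperbolic WPD elements. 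The Bestvina-Fujiwara theorem then yields $\dim\widetilde{QH}(\G)=\infty$.

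The step requiring the most care is making precise the identification of the boundary fixed points of $f$ in $\partial\X$ with the trees $T_f^\pm\in\overline{\mathcal{PT}}$. Concretely, one has to verify that for a base triple $A=(p_1,p_2,p_3)\in\X$ the sequence $Af^N$ is a Gromov sequence in $\X$ whose limit in $\partial\X$ depends only on $T_f^+$, and that the resulting assignment $\{T_f^\pm,T_g^\pm\}\to\partial\X$ is injective. The hyperbolicity estimates already built into the verification of (A1) and (A2) (Lemmas \ref{OutA1}, \ref{OutA2}), together with the fact that $\rho$-distances blow up under iteration by $f$ and $g$, should make this routine, but it is the one place where the hypothesis of independence of $f$ and $g$ is genuinely used.
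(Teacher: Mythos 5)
Your overall reduction to \cite{BF} is the same as the paper's, and the easy parts are fine: applying the Main Theorem to $\{f,g\}$, getting WPD for $f$ and $g$ from Proposition \ref{p:WPD}, and observing that WPD and positive translation length survive restriction of the action to $\Gamma$. The gap is in the step you yourself flag and then declare routine: producing two \emph{inequivalent} hyperbolic WPD elements in the sense of \cite{BF}. You take $g_1=f$, $g_2=g$ and argue $f\not\sim g$ by saying the boundary fixed points of $f$ and $g$ in $\partial\X$ are ``naturally indexed by'' $T_f^\pm$ and $T_g^\pm$, so independence of the trees gives disjoint fixed pairs. Neither half of this is routine. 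First, the relation $\sim$ of \cite{BF} is a uniform fellow-traveling condition on quasi-axes, and converting it into a statement about limit sets already requires WPD-type arguments. Second, and more seriously, injectivity of the assignment (irreducible tree) $\mapsto$ (point of $\partial\X$) is nowhere established and is dubious as a general claim: boundaries of such coarse complexes typically forget the projective/metric data carried by the trees (compare the curve complex, whose boundary records only unmeasured ending laminations), so $T_f^+\neq T_g^+$ does not by itself yield distinct points of $\partial\X$. What you actually need is a quantitative estimate: for a base triple $A$, the Gromov product of $Af^N$ and $Ag^M$ at $A$ stays bounded, i.e.\ the nested annulus chains obtained by pumping $f$ and pumping $g$ concatenate into an essentially optimal chain from $Af^N$ to $Ag^M$. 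Independence must enter there, and Lemmas \ref{OutA1} and \ref{OutA2} alone do not give it; no such argument is supplied.

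There are two ways to close the gap. (a) Use WPD instead of boundary identifications: by the standard consequence of WPD (as in \cite[Proposition 6]{BF}), the setwise stabilizer of the fixed-point pair of the WPD element $f$ is virtually cyclic; so if $f\sim g$ then $f$ and $g$ would have common nonzero powers, forcing $T_g^+\in\{T_f^+,T_f^-\}$ and contradicting independence. (b) Follow the paper, which never tries to prove $f\not\sim g$: it uses independence only to form the Schottky subgroup generated by high powers of $f$ and $g$, observes that WPD of a single hyperbolic element of that subgroup suffices, and then quotes \cite[Proposition 6(5)]{BF} to produce \emph{some} pair $g_1\not\sim g_2$ of hyperbolic elements (essentially a power of $f$ and a conjugate of it), after which \cite[Theorem 1]{BF} applies. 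Either repair works, but as written your ``distinct trees $\Rightarrow$ distinct boundary points $\Rightarrow f\not\sim g$'' is the missing argument, not a routine verification. A minor point: \cite{FM} is about Schottky subgroups of mapping class groups; for the existence of independent fully irreducibles in $Out(F_n)$ it is simpler to conjugate a fully irreducible $f$ by any element not preserving $\{T_f^+,T_f^-\}$, as in the paper's lattice corollary.
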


\begin{proof}
  This follows from the arguments of \cite{BF}. In that paper the main
  theorem was proved under the assumption that every hyperbolic
  element satisfies WPD (and then the action is said to satisfy
  WPD). In fact, it suffices to know that one of the hyperbolic
  elements in the Schottky subgroup (the one generated by high powers
  of independent fully irreducible elements) satisfies WPD. Then
  \cite[Proposition 6(5)]{BF} shows that there exist hyperbolic
  elements $g_1,g_2$ with $g_1\not\sim g_2$ and then \cite[Theorem
  1]{BF} implies the result.
\end{proof}

\begin{cor}
Let $\G$ be an irreducible lattice in a semisimple Lie group of rank
$\geq 2$. If $\G\to Out(F_n)$ is an embedding, the image does not
contain any fully irreducible automorphisms.
\end{cor}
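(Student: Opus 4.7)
The plan is to derive a contradiction from the assumption that some fully irreducible $f\in Out(F_n)$ lies in the image of $\G$, by producing two independent fully irreducibles inside $\G$ and then invoking Corollary \ref{holy grail} together with the vanishing of $\widetilde{QH}$ for higher rank irreducible lattices.

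First, assume $f\in\G$ is fully irreducible. The plan is to find $\gamma\in\G$ such that $f$ and $\gamma f\gamma^{-1}$ are independent, i.e.\ $\{T_f^+,T_f^-\}\cap\{\gamma T_f^+,\gamma T_f^-\}=\emptyset$. The key fact I will use is that the setwise stabilizer in $Out(F_n)$ of the pair $\{T_f^+,T_f^-\}$ is virtually cyclic (generated up to finite index by $f$); this is a standard consequence of the fact that $T_f^\pm$ is fixed (up to scale) only by automorphisms commuting with a power of $f$. Since $\G$ is an irreducible lattice in a semisimple Lie group of rank $\geq 2$, $\G$ is not virtually cyclic, so $\G\cap\mathrm{Stab}\{T_f^+,T_f^-\}$ has infinite index in $\G$. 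Pick any $\gamma\in\G$ outside this stabilizer; then $\gamma f\gamma^{-1}$ is fully irreducible with stable/unstable trees $\gamma T_f^\pm$ disjoint from $\{T_f^+,T_f^-\}$, and the two automorphisms are independent in the sense of Corollary \ref{holy grail}.

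Next, Corollary \ref{holy grail} applied to $\Gamma$ (containing the two independent fully irreducibles just produced) yields $\dim\widetilde{QH}(\G)=\infty$. On the other hand, by Burger-Monod, for an irreducible lattice $\G$ in a semisimple Lie group of rank $\geq 2$ the comparison map $H^2_b(\G;\R)\to H^2(\G;\R)$ is injective, equivalently every quasi-homomorphism on $\G$ is a sum of a homomorphism and a bounded function, so $\widetilde{QH}(\G)=0$. This contradicts the previous sentence and completes the proof.

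The main obstacle is the first step: verifying that $f\in\G$ forces the existence of an independent conjugate $\gamma f\gamma^{-1}\in\G$. The cleanest route is through the virtual cyclicity of the stabilizer of $\{T_f^+,T_f^-\}$ in $Out(F_n)$; once this is granted, the argument reduces to the elementary observation that a higher rank irreducible lattice cannot be contained, modulo finite index, in a virtually cyclic subgroup. All other steps are direct appeals to Corollary \ref{holy grail} and to the Burger-Monod vanishing theorem for $\widetilde{QH}$ of higher rank irreducible lattices.
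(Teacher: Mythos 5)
Your proposal is correct and follows essentially the same route as the paper: produce two independent fully irreducibles $f$ and $\gamma f\gamma^{-1}$ in the image (using that the stabilizer of $\{T_f^+,T_f^-\}$ is virtually cyclic, the fact the paper cites from its earlier GAFA reference, while a higher rank lattice is not), then contradict Burger--Monod's vanishing of $\widetilde{QH}(\G)$ via Corollary \ref{holy grail}. The only difference is organizational: the paper phrases it as a dichotomy on whether the image preserves $T_f^\pm$, which is the same argument.
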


\begin{proof}
Burger and Monod proved that $\widetilde{QH}(\Gamma)=0$ \cite{BM}. By
Corollary \ref{holy grail} the image $H\subset Out(F_n)$ does not
contain two independent fully irreducible automorphisms. Now suppose
that $f\in H$ is fully irreducible. If $H$ leaves $T_f^\pm$ invariant
then $H$ and $\G$ are virtually cyclic (see \cite{gafa}), which is
impossible. If $h\in H$ does not preserve $T_f^\pm$ then $f$ and
$hfh^{-1}$ are independent fully irreducible automorphisms in $H$,
contradiction. 
\end{proof}

\begin{cor}
The Cayley graph of $Out(F_n)$ with respect to a finite generating set
contains arbitrarily large balls
consisting entirely of fully irreducible automorphisms.
\end{cor}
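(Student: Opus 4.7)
The plan is to leverage the hyperbolic complex $\mathcal X$ from the Main Theorem. The driving observation is that any $\gamma \in Out(F_n)$ with positive $\mathcal X$-translation length is automatically fully irreducible: if some power $\gamma^k$ preserved the conjugacy class of a proper free factor $F$, then $\gamma^k$ would lie in $\mathrm{Stab}([F])$, which has bounded orbits on $\mathcal X$ by Proposition~\ref{bounded orbits}; hence $\tau_{\mathcal X}(\gamma^k)=0$, contradicting $\tau_{\mathcal X}(\gamma^k)=k\,\tau_{\mathcal X}(\gamma)>0$. It therefore suffices to produce, for every $R$, an element $g$ such that every element in the Cayley-ball of radius $R$ around $g$ has positive $\mathcal X$-translation length.

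Pick $f=f_1$ from the given collection; by the Main Theorem and Proposition~\ref{p:WPD}, it acts loxodromically on $\mathcal X$ with $\tau_{\mathcal X}(f)=\ell>0$ and satisfies WPD. I claim that for each fixed $h \in Out(F_n)$, there is $N_h$ with $\tau_{\mathcal X}(f^N h) > 0$ for all $N \geq N_h$. Granted the claim, and using that the Cayley ball $B(e,R)$ is \emph{finite} because $Out(F_n)$ is finitely generated, set $N_0 = \max_{h \in B(e,R)} N_h$; then the Cayley ball $B(f^{N_0}, R) = f^{N_0} \cdot B(e,R)$ consists entirely of elements with positive $\mathcal X$-translation length and hence, by the opening observation, entirely of fully irreducible automorphisms.

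The main obstacle is the per-$h$ claim. If $h$ preserves the unordered pair $\{T_f^+,T_f^-\}$, then $h$ lies in the stabilizer of this pair, which is virtually cyclic with $\langle f\rangle$ of finite index; hence $h=f^j\epsilon$ for some integer $j$ and finite-order $\epsilon$ commuting with a power of $f$, and a direct calculation shows $f^N h$ has positive $\mathcal X$-translation length for $N \gg 0$. The substantive case is when $h$ does not preserve $\{T_f^\pm\}$. Here a standard hyperbolic ping-pong argument, using the north--south dynamics of $f$ on $\overline{\mathcal{PT}}$ (Proposition~\ref{north-south}) together with WPD, shows that for large $N$ the element $f^N h$ acts on $\overline{\mathcal{PT}}$ with north--south dynamics whose attractor lies in a prescribed small neighborhood of $T_f^+$ and whose repeller lies near $h^{-1}(T_f^-)$; pumping along this dynamics, exactly as in the proof that the original $f_i$ have nonzero translation length, produces annuli in the annulus system witnessing $\tau_{\mathcal X}(f^N h)>0$.
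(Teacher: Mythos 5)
Your reduction to positive translation length is fine (positive $\tau_{\X}$ forces full irreducibility via Proposition \ref{bounded orbits}), but the per-$h$ claim on which everything rests --- ``for each fixed $h$ there is $N_h$ with $\tau_{\X}(f^Nh)>0$ for all $N\ge N_h$'' --- is false in general, and your Case 1 is where it breaks. The stabilizer of the unordered pair $\{T_f^+,T_f^-\}$ is virtually cyclic but may be of \emph{dihedral} type: it can contain elements $h$ with $hf^mh^{-1}=f^{-m}$ (equivalently $h$ swaps $T_f^+$ and $T_f^-$), and such $h$ are not of the form $f^j\epsilon$ with $\epsilon$ finite order commuting with a power of $f$. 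For a concrete failure, take $f$ conjugate to its inverse by an involution $h$ (such fully irreducible $f$ exist, e.g.\ geometric ones coming from suitable pseudo-Anosovs): then $(f^Nh)^2=f^Nhf^Nh=f^Nf^{-N}h^2=1$, so $f^Nh$ is torsion, hence has zero translation length for \emph{every} $N$. Consequently, once $R$ is large enough that $B(e,R)$ contains such an $h$, no power $f^{N_0}$ makes the ball $f^{N_0}B(e,R)$ consist of loxodromic elements, and your scheme collapses. Repairing it would require either proving you may choose $f$ with no ``reverser'' in $Out(F_n)$ and redoing Case 1, or abandoning the idea that the ball can always be centered at a power of a single loxodromic.

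There is also a secondary gap in your ``substantive case'': north--south ping-pong on $\overline{\mathcal{PT}}$ (Proposition \ref{north-south}) gives attracting/repelling neighborhoods for $f^Nh$ when $h(T_f^+)\neq T_f^-$, but positivity of $\tau_{\X}(f^Nh)$ in the Bowditch complex is not ``exactly as for the $f_i$'': the paper's pumping argument uses the defining annulus $A=(D^-,D^+)$, which separates $T_f^\pm$ \emph{by construction}, whereas for $f^Nh$ you must exhibit a translate of $A$ in the annulus system separating the new fixed pair (roughly $T_f^+$ and $h^{-1}T_f^-$) and verify the nesting $B<(f^Nh)B$ needed to pump crossratios; WPD plays no role in this step. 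For comparison, the paper's actual proof avoids all dynamics of products: it takes an unbounded quasi-homomorphism $\phi$ produced by the construction (via \cite{BF}), notes that $|\phi|\le C$ on every non--fully-irreducible element because such elements have bounded orbits by Proposition \ref{bounded orbits}, and then centers the ball at any $f$ with $\phi(f)>C+\max_{B(1,R)}|\phi|+\Delta(\phi)$; subadditivity forces $\phi>C$, hence full irreducibility, on the whole ball. That soft argument automatically handles torsion elements and reversers, which is exactly what your approach does not.
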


\begin{proof}
Fix a quasi-homomorphism $\phi:Out(F_n)\to\R$ which is unbounded and
which arises from our construction. Then there is a constant $C>0$
such that whenever $g\in Out(F_n)$ is not fully irreducible, then
$|\phi(g)|<C$. This is because $g$ has bounded orbits on $\X$ by
Proposition \ref{bounded orbits} and hence there is a uniformly
bounded orbit. On such elements $\phi$ is uniformly bounded by
construction of \cite{BF}.

Now fix $R>0$ and let $C'=\max_{x\in B(1,R)}|\phi(x)|$. Choose some
$f\in Out(F_n)$ such that $\phi(f)>C+C'+\Delta(\phi)$. Then
$\phi(g)>C$ for every $g\in B(f,R)$ so this ball consists of fully
irreducible automorphisms.
\end{proof}

\begin{remark} A similar argument shows that for every $R$ there is
  $R'$ so that every $R'$-ball contains an $R$-ball that consists
  entirely of fully irreducible automorphisms.
\end{remark}

\subsection{Dictionary}
The table below provides a correspondence between some objects
associated with $Out(F_n)$ and others associated with $MCG$.

\renewcommand\arraystretch{1.6}
\begin{center}
  \begin{tabular}{| c | c | }
    \hline
    \multicolumn{2}{|c|}{Dictionary} \\ \hline\hline
    $F_n$ & $S$ a compact surface\\ \hline
    $Out(F_n)$ & $MCG$ \\ \hline 
primitive element & non-$\partial$-parallel scc \\ \hline
free factor & connected subsurface \\ \hline
fully irreducible $f$ & pseudo-Anosov $f$ \\ \hline
simplicial tree & multicurve \\ \hline
    $\overline{\mathcal{T}}$ &  $\mathcal{ML}$\\ \hline
    $\mathcal{MC}(F_n)$& $\mathcal{MC}$ \\ \hline
    $T_f^+$ & $\Lambda_f^+$\\ \hline
    $\mathcal{M}(F_n)=\overline{\{\eta_\gamma\mid\gamma\mbox{ primitive}\}}$ & $\overline{\{\eta_\gamma\mid\gamma\mbox{ a non-$\partial$-parallel scc}\}}$\\ \hline
    $\overline{\mathcal{T}}\times\mathcal{MC}(F_n)\overset{\langle\cdot ,\cdot\rangle}{\to} [0,\infty)$ & $\mathcal{ML}\times\mathcal{MC}\overset{\langle\cdot ,\cdot\rangle}{\to} [0,\infty)$ \\ \hline
 \end{tabular}
\end{center}

\begin{remark}
  The space $\mathcal{MC}$ of measured currents on a surface and an
  intersection pairing
  $\langle\cdot,\cdot\rangle:\mathcal{MC}\times\mathcal{MC}\to
  [0,\infty)$ was introduced by Bonahon \cite{bonahon1}. He also
  produces an embedding $\mathcal{ML}\to\mathcal{MC}$ (whose image in
  $\mathcal {PMC}$ is the closure of the set of $\eta_\gamma$'s) and
  the pairing in the table is obtained by restriction. A {\it
    multicurve} is a measured lamination with support a collection of
  disjoint simple closed curves (scc's). According to Skora
  \cite{skora}, $\mathcal{PML}$ can be identified with projectivized
  space of small $\pi_1(S)$-trees in which boundary curves are
  elliptic. The subspaces of simplicial trees in $\mathcal{PT}$ and
  $\mathcal{PML}$ are dense.
\end{remark}

A version of Corollary~\ref{T2}
holds for surfaces. Fix a complete hyperbolic structure on the
interior of $S$ and by $|\alpha|$ denote the hyperbolic length of the
closed geodesic homotopic to $\alpha$.

\begin{thm}\label{T2surface}
Let $f$ and $g$ be pseudo-Anosov and assume
$\Lambda^+_f\not=\Lambda^+_g$. There is $\delta>0$ such that for all
non-boundary-parallel simple closed curves $\alpha$ we have either
$\langle \Lambda^+_f,\alpha\rangle\ge\delta|\alpha|$ or $\langle
\Lambda^+_g,\alpha\rangle\ge\delta|\alpha|$.
\end{thm}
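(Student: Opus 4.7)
The plan is to transfer the train-track / legality framework that underlies Corollary \ref{T2} into the surface setting, using the fact that the simple closed curve hypothesis plays the same role that primitivity played for $F_n$.

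First I fix train-track representatives $\tau_f$ and $\tau_g$ carrying the stable laminations $\Lambda_f^+$ and $\Lambda_g^+$ (for instance using Penner--Harer or Masur--Minsky). Realize each curve $\alpha$ as a carried edge-path after pulling tight, and define the legality $LEG_{\tau_f}(\alpha)$ as the proportion of $|\alpha|$ taken up by maximal legal subsegments of length at least a critical constant $C$ (big enough that legal subsegments of length $\geq C$ survive any cancellation under a single iterate of $f$). The first step is the analogue of Lemma \ref{5.5}(2): if $LEG_{\tau_f}(\alpha)\geq\epsilon$, then $\langle \Lambda_f^+,\alpha\rangle \geq \delta(\epsilon)\,|\alpha|$. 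This is standard: iterating $f$ stretches the legal portion by a definite factor, and the resulting asymptotic growth rate of $|f^n(\alpha)|$ computes $\langle \Lambda_f^+,\alpha\rangle$ up to a multiplicative constant coming from the hyperbolic length vs.\ combinatorial length comparison.

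The heart of the proof is the dichotomy: there exists $\epsilon>0$ such that every non-boundary-parallel simple closed curve $\alpha$ satisfies $LEG_{\tau_f}(\alpha)\geq\epsilon$ or $LEG_{\tau_g}(\alpha)\geq\epsilon$. I would mimic the proof of Lemma \ref{5.6}. Suppose not; then there is a sequence of simple closed curves $\alpha_n$ with both legalities tending to $0$. An analogue of Lemma \ref{2.10} for surfaces then produces, for each $\alpha_n$, either a long $\tau_f$-periodic subword, a long $\tau_g$-periodic subword, or long legal segments in both tracks. The periodic-word case corresponds to $\alpha_n$ containing a long power of a boundary component (the only periodic conjugacy classes under a pseudo-Anosov on a surface with boundary), which is impossible because a simple non-boundary-parallel curve cannot contain two consecutive copies of a boundary word — this is the geometric replacement for the Algom-Kfir / Whitehead-graph argument. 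Consequently, up to passing to a subsequence, the $\alpha_n$ limit (in the Hausdorff or lamination sense) to a lamination $\ell$ which is simultaneously a bad line for $\tau_f$ and $\tau_g$; by the surface version of Corollary \ref{bad line+}, long stable leaf segments of $\tau_f$ appear inside long stable leaf segments of $\tau_g$ and vice versa. This forces $\Lambda_f^+=\Lambda_g^+$, contradicting the hypothesis.

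Given these two ingredients, the theorem follows immediately: picking $\delta=\min(\delta(\epsilon),\delta(\epsilon))>0$ from Step 1 with the dichotomy constant from Step 2 gives the desired lower bound, since at least one legality exceeds $\epsilon$, hence the corresponding pairing is at least $\delta|\alpha|$. The main obstacle is the dichotomy of Step 2, specifically the verification that the periodic cases cannot occur for simple non-boundary-parallel curves; this requires a careful argument (analogous to Whitehead's lemma on primitivity) showing that simplicity incompatible with containing a long segment that winds repeatedly around a fixed simple closed curve. Everything else — the legality estimate, the train-track machinery, and the limiting argument — transports verbatim from the free group setting treated earlier in the paper.
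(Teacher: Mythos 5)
Your overall strategy---transplanting the legality machinery of Lemmas \ref{2.10}, \ref{5.5}, \ref{5.6} to the surface---is not the route the paper takes, and as written it has a genuine gap at its foundation: legality is defined by realizing $\alpha$ as a carried edge path, and on a surface no single train track (Penner--Harer, Masur--Minsky, or otherwise) carries all essential simple closed curves, so $LEG_{\tau_f}(\alpha)$ is simply undefined for most $\alpha$. In the free-group setting this problem never arises because the marked graph $H$ carries every conjugacy class as an immersed loop; that is precisely the feature which does not transfer, so the claim that everything ``transports verbatim'' is unjustified. (Trying to evade this by passing to the induced automorphism of $\pi_1(S)$ only works when $S$ has connected boundary---otherwise $\pi_1(S)$ is not free or the automorphism is not fully irreducible---and even then a non-boundary-parallel simple closed curve need not be primitive, e.g.\ a separating curve, so Lemma \ref{5.6} and Corollary \ref{T2} do not apply as stated.) Two further points need repair: the dichotomy of Lemma \ref{5.6} concerns the iterates $\rho^{N}(\alpha)$, not $\alpha$ itself, so you must pull the estimate back to $\alpha$ via the bi-Lipschitz bound for the fixed power $N_0$ (your statement of the dichotomy for $\alpha$ itself is not what the lemma gives); and your replacement for the Whitehead/Algom-Kfir step---``a simple non-boundary-parallel curve cannot contain two consecutive copies of a boundary word''---is asserted rather than proved, and what is true and actually needed is a bounded-winding statement, which is exactly the Birman--Series/collar-lemma fact that simple closed geodesics stay out of a definite cusp (or boundary-collar) neighborhood.

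That last observation points to the paper's intended proof, which is short and purely geometric: since $\Lambda_f^+\neq\Lambda_g^+$ and each is minimal and filling, the pair jointly fills and determines a Euclidean metric with cone singularities in which the length of a representative of $\alpha$ is comparable to $\langle\Lambda_f^+,\alpha\rangle+\langle\Lambda_g^+,\alpha\rangle$; the hyperbolic geodesic representative of a non-boundary-parallel simple closed curve avoids a fixed neighborhood of the cusps, and on the complement of these neighborhoods the hyperbolic and flat metrics are comparable, giving $|\alpha|\leq C\bigl(\langle\Lambda_f^+,\alpha\rangle+\langle\Lambda_g^+,\alpha\rangle\bigr)$ and hence the theorem with $\delta=1/(2C)$. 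So the one genuinely new ingredient your plan requires (control of winding of simple curves around the boundary) is the same cusp-avoidance fact the paper uses directly, and once you have it the flat-metric comparison makes the train-track apparatus unnecessary; if you insist on the train-track route, you must first build a substitute for ``every curve is carried'' and re-derive the analogues of Lemma \ref{2.10} and of the common-power criterion for surface tracks, none of which is verbatim.
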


The ingredients of the proof are that simple closed geodesics never
enter a neighborhood of any cusp and that on the complement of these
neighborhoods the hyperbolic metric is comparable to the Euclidean
metric with cone singularities determined by $\Lambda^+_f$ and
$\Lambda^+_g$.

Given Theorem~\ref{T2surface}, a proof of
Proposition~\ref{wpd.surface} and an alternate proof of
Theorem~\ref{MCG} can be obtained by using the dictionary to
translate the proofs in the $Out(F_n)$ case.

\bibliography{./ref}

\end{document}